\theoremstyle{plain}
\newtheorem*{theorem*}{Theorem}
\newtheorem{theorem}{Theorem}[section]
\newtheorem{lemma}[theorem]{Lemma}
\newtheorem{proposition}[theorem]{Proposition}
\theoremstyle{definition}
\newtheorem{definition}[theorem]{Definition}
\newtheorem{construction}[theorem]{Construction}
\newtheorem{remark}[theorem]{Remark}
\newtheorem{example}[theorem]{Example}
\numberwithin{equation}{section}
\newcommand{\catfont}{\mathbb}
\newcommand{\catC}{\catfont{C}}
\newcommand{\catE}{\catfont{E}}
\newcommand{\catA}{\catfont{A}}
\newcommand{\Rmaps}{\text{\textbf{$R$-Map}}}
\newcommand{\Lmaps}{\text{\textbf{$L$-Map}}}
\newcommand{\Ralg}{\text{\textbf{$R$-Alg}}}
\newcommand{\Lalg}{\text{\textbf{$L$-Coalg}}}
\newcommand{\Ftmaps}{\text{\textbf{$F_t$-Map}}}
\newcommand{\Cmaps}{\text{\textbf{$C$-Map}}}
\newcommand{\Ftalg}{\text{\textbf{$F_t$-Alg}}}
\newcommand{\Calg}{\text{\textbf{$C$-Coalg}}}
\newcommand{\Fmaps}{\text{\textbf{$F$-Map}}}
\newcommand{\Ctmaps}{\text{\textbf{$C_t$-Map}}}
\newcommand{\Falg}{\text{\textbf{$F$-Alg}}}
\newcommand{\icat}{\mathbb}
\newcommand{\lcat}{\mathcal}
\newcommand{\ocat}{\mathscr}
\newcommand{\category}[1]{\underline{\smash[b]{\text{\rm{#1}}}}}
\newcommand{\scat}{\category}
\newcommand{\mono}{\rightarrowtail}
\newcommand{\htimes}{\hat{\times}}
\newcommand{\unufib}{u^{\bf n}_{\otimes}}
\newcommand{\nufib}{\lcat{I}^{\bf n}_{\htimes}}
\newcommand{\ufib}{\lcat{I}_{\htimes}}
\newcommand{\uufib}{u_{\otimes}}
\newcommand{\squish}{\textbf{squash}}
\newcommand{\squi}{\textbf{sq}}
\newcommand{\dom}{\text{dom}}
\newcommand{\colim}{\text{colim}}
\newcommand{\lorth}[1]{^{\boxslash}{#1}}
\newcommand{\co}{\colon}
\newcommand{\Jay}{\mathcal{J}}
\newcommand{\Ai}{\mathcal{I}}
\newcommand{\Em}{\mathcal{M}}
\newcommand{\CAT}{\text{\bf CAT}}
\newcommand{\GRP}{\scat{Gpd}}
\newcommand{\SSet}{\scat{sSet}}
\newcommand{\sSet}{\SSet}
\newcommand{\CSet}{\scat{cSet}}
\newcommand{\lift}{\text{lift}}
\newcommand{\abra}[1]{\langle #1 \rangle}
\newcommand{\hhom}{\hat{\hom}}
\newcommand{\commacat}[1]{{\bf \downarrow #1}}
\newcommand{\drpullback}[1][dr]{\save*!/#1-1.2pc/#1:(-1,1)@^{|-}\restore}
\newcommand{\drpushout}[1][ul]{\save*!/#1-1.2pc/#1:(-1,1)@^{|-}\restore}
\newcommand{\pbcorner}{\drpullback}
\newcommand{\pocorner}{\hbox to 8pt{{\vrule height8pt depth0pt width0.5pt}%
    \vbox to 8pt{{\hrule height0.5pt width7.5pt depth0pt}\vfill}}}
\newcommand{\poexcursion}{\save[]-<15pt,-15pt>*{\pocorner}\restore}
\begin{document}
\title[]{Models of Martin-L\"of type theory \\ from algebraic weak factorisation systems}
\date{\today}

\author[N. Gambino]{Nicola Gambino}
\address{School of Mathematics, University of Leeds, UK}
\email{n.gambino@leeds.ac.uk}

\author[M. F. Larrea]{Marco Federico Larrea}
\address{School of Mathematics Alumni, University of Leeds, UK}
\email{marco@lasi.ai}

\maketitle

\begin{abstract}
We introduce type-theoretic algebraic weak factorisation systems and show how they give rise to homotopy-theoretic models of Martin-L\"of type theory. This is done by showing that the comprehension
category associated to a type-theoretic algebraic weak factorisation system satisfies the assumptions necessary to apply a right adjoint method for splitting comprehension categories. 
We then provide methods for constructing several 
examples of type-theoretic algebraic weak factorisation systems, encompassing the existing groupoid
 and cubical sets models, as well as new models based on normal fibrations. 
\end{abstract}


\section*{Introduction}

{\bf Context and motivation} 
The construction of category-theoretic models of Martin-L\"of type theory~\cite{Nordstrom_2001} is a complex task that involves
two main problems. First, one needs to find a category with sufficient structure, so as to be able to interpret
the type-formation rules, e.g.~those for dependent sum and dependent product types
($\Sigma$-types and $\Pi$-types, respectively, for short). In particular, in order to have a model with
intensional identity types ($\mathsf{Id}$-types for short), the category under consideration needs to 
possess some homotopy-theoretic structure, as given for example by a weak factorisation system (wfs for short)~\cite{Awodey_2009,Gambino_2008,North_2019}. Secondly, one has to transform the category
under consideration into a genuine model of Martin-L\"of type theory, in which certain strictness conditions
(needed to model correctly the substitution operation) are required to hold, as in a split comprehension category~\cite{Jacobs_1993,Jacobs_1999}.
As these conditions are rarely satisfied in practice, this second step often involves applying suitable general coherence theorems,
analogous to Mac Lane's theorem relating monoidal categories and strict monoidal categories~\cite{MacLane_1963}. To make things more difficult, these two issues are closely related. 

There are two main methods to address the strictness issues, to which we refer as
the left and right adjoint splitting,~\cite{Curien,Hofmann_1994,Lumsdaine_2015}, since they are based on the left and the right adjoint to the inclusion of split Grothendieck fibrations into Grothendieck fibrations, respectively~\cite{giraud1966cohomologie,Streicher_2009}. 
The right adjoint splitting was  already used in~\cite{Hofmann_1994} in order to remedy the issues  affecting the interpretation of Martin-L\"of type theory in locally Cartesian closed categories \cite{Seely_1984}, thus accounting for $\Sigma$-types, $\Pi$-types and extensional $\mathsf{Id}$-types.
Subsequently, Warren isolated sufficient conditions to apply the right adjoint splitting
and produce models with intensional $\mathsf{Id}$-types~\cite{Warren_2016}. 
These conditions, however, are not generally satisfied in categories equipped with  weak factorisation systems, giving the impression, apparently widespread in the research community, that the right adjoint splitting cannot be used to construct homotopy-theoretic models of Martin-L\"of type theories and that  the left adjoint splitting should be used instead, applying the results in~\cite{Lumsdaine_2015}.

Our aim in this paper is to show that this impression is wrong and that the right adjoint splitting can be applied to obtain homotopy-theoretic models of Martin-L\"of type theory.
The key observation underpinning this work, which was suggested by Garner (see~\cite[p.~34]{Warren_2016}), is that the right adjoint splitting can be applied provided that we work with algebraic, rather than ordinary, weak factorisation systems (awfs's for short)~\cite{Garner_2009,Grandis_2006} (see also~\cite{Bourke_2016, Bourke_2016_II,Riehl}). In an awfs, the lifting properties that are part of the definition of a wfs are replaced by lifting structures, satisfying a naturality condition. As we will see, it is the algebraic character of awfs's that allows us to apply the right adjoint splitting. 

While awfs's may seem cumbersome,  there are situations in which it  is actually more natural to consider awfs's than wfs's, most notably in the ongoing work aimed at defining homotopy-theoretic models of Martin-L\"of's in a constructive metatheory~\cite{Cohen_2016,Gambino_2019,Gambino_2017}. Indeed, the left adjoint splitting used in~\cite{Lumsdaine_2015} seems to work only under certain exponentiability assumptions, which are not constructively valid in simplicial sets~\cite{Bezem_2015}. Issues of
constructivity are also the main motivation for our work on normal uniform fibrations, as explained further below.

\smallskip

\noindent{\bf Main results.} This paper makes two main contributions. The first is to introduce type-theoretic awfs's and show that they give rise to models of Martin-L\"of's type theory with $\Sigma$-types, $\Pi$-types and 
$\mathsf{Id}$-types. The second is to introduce a general method to obtain examples of type-theoretic awfs's and to give a homogeneous account of several models in which dependent types are interpreted as uniform fibrations (in the sense of~\cite{Cohen_2016,Gambino_2017}), including the groupoid model~\cite{Hofmann_1998}, in which dependent types are interpreted as split fibrations, and models based on simplicial and cubical sets~\cite{Cohen_2016,Gambino_2017}, in which dependent types are interpreted as uniform Kan fibrations. 

Our construction of models of Martin-L\"of type theory from type-theoretic awfs's is obtained in two steps. The first  is to define a non-split comprehension category from a type-theoretic awfs (\cref{prop:AWFSCC}) and show that this comprehension category is equipped with \emph{pseudo-stable} $\Sigma$-types, $\Pi$-types and $\mathsf{Id}$-types (\cref{thm:AMtoCC}) in the sense of~\cite{Lumsdaine_2015}, \emph{i.e.}~commuting with substitution up to isomorphism. The second step is to apply the right adjoint splitting and turn the non-split comprehension category obtained in the first step into a split one equipped with 
\emph{strictly stable} $\Sigma$-types, $\Pi$-types and $\mathsf{Id}$-types (\cref{thm:coht}). It should be noted that the extra algebraic structure of a type-theoretic awfs is crucial to have pseudo-stable $\mathsf{Id}$-types in the non-split comprehension category (cf.~\cite{Berg:2010aa}) and this, in turn, is essential to apply the  right adjoint splitting. As an illustration, we revisit the groupoid model of Martin-L\"of type theory \cite{Hofmann_1998}. Specifically, we equip the category of groupoids {\GRP } with an awfs whose right maps correspond to normal isofibrations and then we prove that such awfs is type-theoretical. In this way, we show that the original groupoid model can be obtained as the split comprehension category associated to this type-theoretic awfs. Since the appearance of this paper as a preprint, these results have been extended to
internal groupoids~\cite{Emmenegger_2020} and to split isofibrations~\cite{Woerkom}.

Our results on constructing type-theoretic awfs's build on the theory of uniform fibrations in \cite{Gambino_2017}. Our \cref{theorem:main} isolates  structure on Grothendieck topos that suffices to produce a type-theoretic awfs of uniform fibrations. We call a Grothendieck topos equipped with such structure a type-theoretic suitable topos (\cref{defn:ttst}). We then show that any Grothendieck topos equipped with an interval object with connections is an example of a type-theoretic suitable topos. The main technical 
machinery used in the proof of \cref{theorem:main} is \cref{theorem:main1}, where we show that given a type-theoretic suitable topos, the resulting awfs of uniform fibrations can be equipped with a stable functorial choice of path objects, which is the structure necessary to produce pseudo-stable identity types. This result fills
the gap between the theory developed in~\cite{Gambino_2017} and its intended application to the  construction of models of Martin-L\"of type theory.

We also advance the theory of \cite{Gambino_2017} by introducing a stronger version uniform fibrations, which we call normal uniform fibrations, in 
which the lifts are required to preserve degeneracies. These can be seen as generalisations of normal cloven isofibrations in groupoids, as explained in \cref{rem:nufiso}. We then show that the ideas
in~\cite{Gambino_2017} can be adapted so as to accommodate this new normality property. With this, we prove  that any suitable topos admits an awfs of normal uniform fibrations (\cref{thm:NUF}).

One of the reasons for our interest in normal uniform fibrations is that they allow us to avoid 
one of the hypotheses for a type-theoretic suitable topos~$\catE$ when constructing a type-theoretic
awfs (\cref{thm:mainNUF}). The requirement on~$\catE$ is that for any object $X \in \catE$, the reflexivity map~$r_X : X \rightarrow X^{I}$, that maps a point of $X$ to the constant path on it, is a member of a distinguished class of monomorphisms $\Em$ whose members are to be thought as generating cofibrations. While this assumption holds if we consider~$\Em$ to be the class of all monomorphisms, it fails in some situations that are of interest for constructive mathematics. For example, if $\catE$ is a presheaf topos and we restrict our attention to $\Em_{dec}$ the class of decidable monomophisms (i.e.~those whose image is level-wise constructively decidable). As noted in \cite{Orton_2017}, it is important to consider decidable monomorphisms when trying to model univalent universes. This issue is also relevant to the question of whether the path types and the identity types coincide in the cubical type theory of \cite{Cohen_2016}. 


\smallskip

\noindent
{\bf Outline of the paper.} \cref{sect:2} reviews the interpretation of type dependency using comprehension categories and the right adjoint splitting.
\cref{sect:2-bis} reviews algebraic weak factorisation systems.
 In \cref{sect:3} we introduce type-theoretic awfs's and prove that the induced comprehension categories support pseudo-stable $\Pi$-types, $\Sigma$-types and $\mathsf{Id}$-types. We then move on to the construction of examples of type-theoretic awfs. In \cref{sect:4} we revisit the groupoid model. In \cref{sect:UF} we show how to construct a type-theoretic awfs from a type-theoretic suitable topos.
 In \cref{sect:NUF} we introduce the awfs of normal uniform fibrations and in \cref{sect:TTNUF} we show that it is type-theoretic.
 
 \smallskip

\noindent 
{\bf Acknowledgements.} Nicola Gambino gratefully acknowledges the support of EPSRC under grant EP/M01729X/1, of the US Air Force Office for Scientific Research under agreement FA8655-13-1-3038 and the hospitality of the School of Mathematics of the University of Manchester while on study leave from the University of Leeds. Marco Federico Larrea would like to thank the Mexican National Council for Science and Technology (CONACyT) for the financial support during his PhD. We are grateful to Steve Awodey, John Bourke, Thierry Coquand, Peter Lumsdaine, Paige North (who spotted an inaccuracy in an earlier
version of~\cref{prop:sigmaAWFS}), Andy Pitts, Christian Sattler, Raffael Stenzel, Andrew Swan and Thomas Streicher for helpful discussions. We are also thankful to the referee for helpful suggestions.

\section{Strict stability, pseudostability and coherence} 
\label{sect:2}

We review basic notions and results on Grothendieck fibrations and comprehension categories,
referring to~\cite{Jacobs_1999,Streicher_2009} for more information. A \emph{comprehension category} over a category $\catC$ consists of a strictly commutative diagram  of the form
  \[
  \xymatrix {
  \catE \ar[rr]^{\chi} \ar[dr]_{\rho} & & \catC^{\rightarrow} \ar[dl]^{\text{cod}} \\
   & \catC \rlap{,} &
  }
  \]
$\text{cod} \co \catC^{\rightarrow}\rightarrow \catC$ is the codomain functor, such that $\rho \co \catE \rightarrow \catC$ is a Grothendieck fibration, and $\chi \co \catE \rightarrow \catC^{\rightarrow}$ maps Cartesian arrows in $\catE$ to pullback squares in~$\catC$. We refer to such a comprehension category by the tuple $(\catC, \rho, \chi)$ or by the pair $(\rho, \chi)$ if the category~$\catC$ is easily inferable from the context. A \emph{cleavage} for $(\catC, \rho, \chi)$ consists of a choice of lifts for the fibration $\rho$, \emph{i.e.} for each $u \co \Delta \rightarrow \Gamma$ in $\catC$ and $A$ over $\Gamma$, a Cartesian 
morphism~$u^* \co~A[u]~\rightarrow~A$ over $u$. We will refer to $A[u]$ as the \emph{reindexing} of $A$ along $u$. A cleavage is \emph{normal} if it preserves identities and is \emph{split} if it preserves identities and composition. A \emph{split comprehension category} is a comprehension category equipped with a split cleavage. Occasionally, we will make use the following notation:
\[
\xymatrix{
B \ar[r]^f \ar@{.}[d] & A \ar@{.}[d] && B \ar[r]^f \ar@{.}[d] \drpullback & A \ar@{.}[d] \\
\Delta \ar[r]_{\sigma} & \Gamma && \Delta \ar[r]_{\sigma} & \Gamma \rlap{,} 
} 
\]
where the diagram on the left indicates that $f$ is an arrow in $\catE$, $\sigma$ is an arrow in~$\catC$ 
and~$\rho(f) = \sigma$. The pullback notation on the diagram on the right indicates that, in addition to the previous data, $f$ is Cartesian.

A split comprehension category $(\catC, \rho, \chi)$ provides a natural setting to interpret the basic judgements and the structural rules of a dependent type theory~\cite{Jacobs_1999}.  Type-theoretic
contexts are interpreted as  objects of $\catC$. 
A judgement $\Gamma \vdash A \co \mathsf{type}$ is interpreted as an object~$A$ in the fibre of~$\rho$ over~$\Gamma \in \catC$. Context extension is modelled via the comprehension functor: for an object~$A$ in the fibre over~$\Gamma$ there is a morphism~$\chi_A \co \Gamma.A \rightarrow \Gamma$ whose domain $\Gamma.A$ is the interpretation of the context extension. A judgement $\Gamma \vdash a \co A$ is interpreted as a map $a \co \Gamma \to \Gamma.A$ in $\catC$ which is a section of $\chi_A \co \Gamma.A \to \Gamma$.

Substitution of terms into types is interpreted with the use of the split cleavage, while substitution of terms into terms is interpreted by composition. Weakening is modelled by reindexing an object $A$ along a morphism of the form $\chi_B : \Gamma.B \rightarrow \Gamma$. For simplicity, the comprehension of $A[\chi_B]$ is written  $\chi_{B, A} \co \Gamma.B.A \rightarrow \Gamma.B$. 

In order to model type-theoretic constructs, we  require a split comprehension category to be equipped with  additional, chosen, structure. Furthermore, this structure must cohere \emph{strictly} with the split cleavage, so as  to ensure validity of substitution rules. \cref{defn:idp} and \cref{defn:strictlystable} describe 
the structure necessary to model $\mathsf{Id}$-types. The corresponding structure for~$\Sigma$-types and~$\Pi$-types is defined in Appendix~\ref{sect:appendix}.


\begin{definition} \label{defn:idp}
A \emph{choice of $\mathsf{Id}$-types} on a comprehension category $(\catC, \rho, \chi)$ consists of the following data.
\begin{enumerate}
\item For each $A$ in the fibre over $\Gamma$ and $a, b \co \Gamma \to \Gamma.A$ sections of $\chi_A$, an object~$\mathsf{Id}_A(a, b)$ over $\Gamma$.
\item For each $A$ over $\Gamma$, a section $r_A$ over the diagonal morphism $\delta_A$, giving a factorisation:
\[
\xymatrix{
&& \Gamma.A.A.\mathsf{Id}_A(x, y) \ar[d]^{\chi_{\mathsf{Id}_A(x, y)}} \\
\Gamma.A \ar[rr]_-{\delta_A} \ar[urr]^{r_A} && \Gamma.A.A \rlap{,}
}
\]
where $\mathsf{Id}_A(x, y)$ is given by $(1)$ applied to the weakened type $A$ over $\Gamma.A.A$ and to the canonical variables $x, y \co \Gamma.A.A \to \Gamma.A.A.A$.
\item For any $A$ over $\Gamma$, $C$ over $\Gamma.A.A.\mathsf{Id}_A(x, y)$ and $t$ a section of $C$ over $r_A$ as shown by the solid arrows in the following diagram:
\[
\xymatrix{
\Gamma.A \ar[d]_{r_A} \ar[rr]^{t} && \Gamma.A.A.\mathsf{Id}_A(x, y).C \ar[d]^{\chi_C} \\
\Gamma.A.A.\mathsf{Id}_A(x, y) \ar@{.>}[urr]|{j_A(C, t)} \ar@{=}[rr] && \Gamma.A.A.\mathsf{Id}_A(x, y)  \rlap{,}
}
\] 
a section $j_A(C, t)$ of $C$ (shown as the dotted arrow) making both triangles commute. 
\end{enumerate}
\end{definition}

We refer to a choice of $\mathsf{Id}$-types by $(\mathsf{Id}, r, j)$. Similarly, we refer to a 
choice of $\Sigma$-types as $(\Sigma, pair, sp)$ and to a choice of $\Pi$-types as $(\Pi, \lambda, 
app)$ (see Appendix~\ref{sect:appendix} for details).

\begin{definition} \label{defn:strictlystable}
Assume $(\catC, \rho, \chi)$ is split and is equipped with a choice $(\mathsf{Id}, r, j)$ of $\mathsf{Id}$-types. We say that the choice is \emph{strictly stable} if for every morphism $\sigma \co \Delta \rightarrow \Gamma$ in the base category, and for every object $A$ in the fibre over $\Gamma$, the following conditions are satisfied:
\begin{enumerate}
\item For any pair of sections $a, b \co \Gamma \to \Gamma.A$, we have that $\mathsf{Id}_{A[\sigma]}(a[\sigma], b[\sigma]) = \mathsf{Id}_A(a, b) [\sigma]$.
\item The following diagram commutes
\[
\xymatrix{
\Delta.A[\sigma] \ar[rr]^{\sigma^*} \ar[d]_{r_{A[\sigma]}} && \Gamma.A \ar[d]^{r_A} \\
\Delta.A[\sigma].A[\sigma].\mathsf{Id}_{A[\sigma]}(x', y') \ar[rr]_{\sigma^{***}} && \Gamma.A.A.\mathsf{Id}_A(x, y) \rlap{,}
}
\]
where the horizontal arrows are given by split reindexing along $\sigma$
\item For any $(C, t)$ as in $(3)$ of \cref{defn:idp}, we can obtain a second pair $(C[\sigma], t[\sigma])$ relative to $\mathsf{Id}_{A[\sigma]}(x', y')$ by reindexing along $\sigma$. We require following diagram to commute:
\[
\xymatrix{
\Delta.A[\sigma].A[\sigma].\mathsf{Id}_{A[\sigma]}(x', y') \ar[d]_{j_{A[\sigma]}(C[\sigma], t[\sigma])} \ar[rrr]^{\sigma^{***}} &&& \Gamma.A.A.\mathsf{Id}_A(x, y) \ar[d]^{j_A(C, t)} \\
\Delta.A[\sigma].A[\sigma].\mathsf{Id}_{A[\sigma]}(x'. y').C[\sigma] \ar[rrr]_{{\sigma^{****}}} &&& \Gamma.A.A.\mathsf{Id}_A(x, y).C \rlap{,}
}
\]
where the horizontal arrows are given by reindexing along $\sigma$.
\end{enumerate}
\end{definition}

While split comprehension categories provide a sound interpretation of type theory, non-split comprehension categories arise most naturally in examples. This mismatch can be remedied 
by applying a well-known construction by Giraud and B\'{e}nabou \cite{giraud1966cohomologie} that replaces a comprehension category $(\catC, \rho, \chi)$ with an equivalent split one $(\catC, \rho^R, \chi^R)$, universally as a right adjoint functor. 
We review this construction. 

\begin{definition} \label{localcleavage}
Let $(\catC, \rho, \chi)$ be a comprehension category and $A$ an object in the fibre over $\Gamma \in \catC$. A \emph{local cleavage} for $A$ consists of an operation $A[-]$ that assigns to each map~$\sigma : \Delta \rightarrow \Gamma$ a Cartesian arrow $\sigma^* \co A[\sigma] \rightarrow A$ over $\sigma$. We say that a local cleavage~$A[-]$ is \emph{normal} if when applied to the identity $1_{\Gamma} \co \Gamma \rightarrow \Gamma$ it outputs the identity arrow, {i.e.}~$A[1_{\Gamma}] = A$ and~$1_\Gamma^* = 1_A$.
\end{definition}

For a fibration $\rho \co \catE \rightarrow \catC$  the category $\catE^R$ is defined as follows. Its objects are pairs~$(A , A[-])$, where~$A$ is an object of~$\catE$ and~$A[-]$ is a local normal cleavage for~$A$. An arrow $f \co (B , B[-]) \rightarrow (A , A[-])$ is just an arrow $f \co B \rightarrow A$ in $\catE$.  
Composition and  identities are just those of $\catE$. Notice that there is a functor $\rho^R \co \catE^R \rightarrow \catC$ given on objects by $\rho^R (A , A[-]) = \rho(A)$. The next lemma is 
well-known~\cite{Curien,giraud1966cohomologie}.

\begin{lemma} \label{lem:splitfibrep}
The functor $\rho^R \co \catE^R \rightarrow \catC$ is a split Grothendieck fibration.  \qed
\end{lemma}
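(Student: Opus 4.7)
The plan is to exhibit a canonical split cleavage on $\rho^R$ by reusing the given local cleavages. Fix a map $\sigma \co \Delta \to \Gamma$ in $\catC$ and an object $(A, A[-])$ in the fibre of $\rho^R$ over $\Gamma$. The evident candidate for a Cartesian lift is the arrow $\sigma^* \co A[\sigma] \to A$ supplied by the local cleavage of $A$; what requires work is to equip its domain with a local normal cleavage so that it becomes an object of $\catE^R$.

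I would define this induced local cleavage $A[\sigma][-]$ by setting $A[\sigma][\tau] := A[\sigma \tau]$ for every $\tau \co \Theta \to \Delta$, with Cartesian arrow over $\tau$ obtained by factoring $(\sigma\tau)^* \co A[\sigma\tau] \to A$ through $\sigma^* \co A[\sigma] \to A$ using the universal property of the Cartesian arrow $\sigma^*$ in $\catE$. Call this factor $\tilde\tau \co A[\sigma\tau] \to A[\sigma]$; by a standard fibrational cancellation lemma, $\tilde\tau$ is Cartesian over $\tau$. Normality of $A[\sigma][-]$ amounts to checking that when $\tau = 1_\Delta$ the factorisation yields the identity, which follows immediately from the uniqueness clause of the Cartesian property applied to $\sigma^* = \sigma^* \circ 1_{A[\sigma]}$ together with the equality $A[\sigma\cdot 1_\Delta]=A[\sigma]$.

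With this definition in hand, one checks that $\sigma^* \co (A[\sigma], A[\sigma][-]) \to (A, A[-])$ is Cartesian for $\rho^R$: since morphisms of $\catE^R$ are just morphisms of $\catE$ and $\rho^R$ is computed by $\rho$, the Cartesian property in $\catE^R$ reduces directly to the Cartesian property of $\sigma^*$ in $\catE$. Thus $\rho^R$ is a Grothendieck fibration with cleavage $\sigma \mapsto \sigma^*$.

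Finally, to see that this cleavage is split, I would verify the two required equalities on the nose. For identities, normality of $A[-]$ gives $A[1_\Gamma]=A$ and $1_\Gamma^* = 1_A$, and moreover $A[1_\Gamma][\tau] = A[1_\Gamma \cdot \tau] = A[\tau]$ so the induced local cleavage coincides with the original; hence the chosen lift of $1_\Gamma$ at $(A, A[-])$ is the identity of $\catE^R$. For composition, given $\tau \co \Theta \to \Delta$ and $\sigma \co \Delta \to \Gamma$, by construction the chosen lift of $\tau$ at $(A[\sigma], A[\sigma][-])$ is exactly the arrow $\tilde\tau$ defined above, satisfying $\sigma^* \circ \tilde\tau = (\sigma\tau)^*$; moreover $A[\sigma][\tau][-] = A[\sigma\tau][-]$ by associativity of composition in $\catC$, so the composed lift agrees with the chosen lift of $\sigma\tau$ as objects of $\catE^R$. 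The only real subtlety here is keeping straight that Cartesianness in $\catE^R$ is inherited from $\catE$ despite the enriched objects; once that is granted, everything else is a routine unwinding of the universal property and of the associativity of composition.
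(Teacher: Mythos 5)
Your argument is correct and is precisely the standard construction of the right adjoint (Giraud--B\'enabou) splitting; the paper omits the proof and merely cites it as well-known, so there is nothing to compare against beyond the references. One small point worth tightening: the equality of local cleavages $A[\sigma][\tau][-] = A[\sigma\tau][-]$ does not follow purely from associativity in $\catC$ (which only handles the underlying objects) — you also need the uniqueness clause of the Cartesian property to see that the two candidate Cartesian arrows over any further $\theta$ coincide, exactly as you used it in the normality check.
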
 


This construction extends to comprehension categories. Fix a comprehension category~$(\catC, \rho, \chi)$. First, we have a morphism $\epsilon_\rho \co \rho^R \rightarrow \rho$ of fibrations (i.e. a functor over~$\catC$ that preserves Cartesian arrows) that acts on an object $(A , A[-])$ of~$\catE^R$ by forgetting the local normal cleavage. Then, we obtain a split comprehension category $(\catC, \rho^R, \chi^R)$, by letting 
$\chi^R$ be the composite in
\[
\xymatrix{
\catE^R \ar[dr]_{\rho^R} \ar[r]^{\epsilon_{\rho}} & \catE \ar[d]^-{\rho} \ar[r]^{\chi} & \catC^{\rightarrow} \ar[dl]^{cod}\\
& \catC  \rlap{.} &  
}
\]

%

It is natural to ask what structure on a cloven comprehension category gives rise to strictly
stable $\Sigma$, $\Pi$, and identity types on its right adjoint splitting. As we discuss below, 
one needs a choice of $\Sigma$, $\Pi$, and $\mathsf{Id}$-types  which are \emph{pseudo-stable}.

\begin{definition} \label{defn:pseudostable}
A choice $(\mathsf{Id}, r, j)$ of $\mathsf{Id}$-types in a comprehension category is said to be \emph{pseudo-stable} if for any Cartesian arrow $f \co B \rightarrow A$ over a morphism $\sigma \co \Delta \rightarrow \Gamma$ in the base, the following conditions are satisfied:
\begin{enumerate}
\item For any pair of sections $a, b \co \Gamma \to \Gamma.A$, there is a Cartesian arrow 
\[
\mathsf{Id}_f(a, b) \co \mathsf{Id}_{B}(a[\sigma], b[\sigma]) \rightarrow \mathsf{Id}_A(a, b)\rlap{,} 
\] 
over $\sigma \co \Delta \rightarrow \Gamma$. Moreover, the assignment $f \mapsto \mathsf{Id}_f(a, b)$ is functorial, that is, $\mathsf{Id}_{1_A}(a, b) = 1_{\mathsf{Id}_A(a, b)}$ and $\mathsf{Id}_{f \circ g}(a, b) = \mathsf{Id}_{f}(a, b) \circ \mathsf{Id}_{g}(a[\sigma], b[\sigma])$.
\item The following diagram commutes:
\[
\xymatrix{
\Delta.B \ar[rr]^{f} \ar[d]_{r_{B}} && \Gamma.A \ar[d]^{r_A} \\
\Delta.B.B.\mathsf{Id}_{B}(x', y') \ar[rr]_{\mathsf{Id}_f(x, y)} && \Gamma.A.A.\mathsf{Id}_A(x, y)
}
\]
\item For any pair $(C, t)$ as in $(3)$ of \cref{defn:idp} and for any Cartesian $h \co C' \rightarrow C$ over $\mathsf{Id}_f$, we can construct a pair $(C', t')$ by pulling back $t$ along $h$ appropriately. We require that the following diagram commutes:
\[
\xymatrix{
\Delta.B.B.\mathsf{Id}_{B}(x', y') \ar[d]_{j_{B}(C', t')} \ar[rrr]^{\mathsf{Id}_f(x, y)} &&& \Gamma.A.A.\mathsf{Id}_A(x, y)\ar[d]^{j_A(C,t)} \\
\Delta.B.B.\mathsf{Id}_{B}(x', y').C' \ar[rrr]_{h} &&& \Gamma.A.A.\mathsf{Id}_A(x, y).C \rlap{,}
}
\]
where the lower horizontal arrow is the (comprehension of the) Cartesian arrow $h \co C' \rightarrow C$.
\end{enumerate}
\end{definition}

The definition of pseudo-stability for $\Sigma$-types and $\Pi$-types is given in Appendix~\ref{sect:appendix}.
The following coherence result connects pseudo-stability in a comprehension category and strict stability on its right adjoint splitting. 
The  proof is based on \cite[Theorem~2]{Hofmann_1994} for $\Sigma$-types and $\Pi$-types and on \cite[Theorem~2.48]{Warren_2016} for $\mathsf{Id}$-types. 

\begin{theorem} [Coherence Theorem] \label{thm:coht}
Let $(\catC, \rho, \chi)$ be a comprehension category equipped with pseudo-stable choices of $\Sigma$-, 
$\Pi$- and $\mathsf{Id}$-types. Then the right adjoint splitting $(\catC, \rho^R, \chi^R)$ is equipped with strictly stable choices of $\Sigma$-, $\Pi$- and $\mathsf{Id}$-types; and the counit $\epsilon_{\rho} \co (\catC, \rho^R, \chi^R) \rightarrow (\catC, \rho, \chi)$ preserves each choice of logical structure strictly. 
\end{theorem}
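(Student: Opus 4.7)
The plan is to transport the given pseudo-stable choices of $\Sigma$-, $\Pi$- and $\mathsf{Id}$-types along the splitting construction, using the pseudo-stability data to equip each output type in $\catE^R$ with a canonical local normal cleavage. Since the counit $\epsilon_\rho \co \catE^R \to \catE$ is the identity on morphisms and sends an object $(A, A[-])$ to its underlying object $A$, the reflexivity terms, pairing maps, $\lambda$-abstractions and eliminators in the split structure can be declared to be literally the same base-category morphisms as in the non-split one; the content of the proof is therefore the specification of local cleavages on the newly formed types and the verification that the split reindexing along an arbitrary $\sigma \co \Delta \to \Gamma$ matches them on the nose.

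I focus on $\mathsf{Id}$-types, the remaining two cases being analogous. Given $(A, A[-])$ over $\Gamma$ and sections $a, b$ of $\chi_A$, I would set
\[
\mathsf{Id}_{(A, A[-])}(a, b) := \bigl( \mathsf{Id}_A(a, b), \ \mathsf{Id}_A(a, b)[-] \bigr)\rlap{,}
\]
where the new local cleavage sends $\sigma \co \Delta \to \Gamma$ to the Cartesian arrow
\[
\mathsf{Id}_{\sigma^*}(a, b) \co \mathsf{Id}_{A[\sigma]}(a[\sigma], b[\sigma]) \to \mathsf{Id}_A(a, b)
\]
supplied by clause~(1) of \cref{defn:pseudostable} applied to the Cartesian lift $\sigma^* \co A[\sigma] \to A$ coming from $A[-]$. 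Normality of this assignment is precisely the functoriality clause of pseudo-stability evaluated at the identity Cartesian arrow $1_A = 1_\Gamma^*$. The reflexivity and elimination terms are taken to be the same morphisms $r_A$ and $j_A(C, t)$ as in $(\catC, \rho, \chi)$. One would treat $\Sigma$- and $\Pi$-types in the same manner, using the pseudo-stability data from Appendix~\ref{sect:appendix} to build the cleavages on the formed types.

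To verify strict stability I would unpack the split cleavage of $\rho^R$ from \cref{lem:splitfibrep}: it sends $(A, A[-])$ along $\sigma$ to $(A[\sigma], A[\sigma][-])$, whose cleavage is obtained on $\tau$ by factorising $A[\sigma\tau] \to A$ through $A[\sigma] \to A$. Clause~(1) of \cref{defn:strictlystable} for $\mathsf{Id}$-types then holds by the very definition of our cleavage; clauses~(2) and~(3) reduce, after tracing through the definitions, to clauses~(2) and~(3) of \cref{defn:pseudostable} applied to the Cartesian arrow $\sigma^*$, together with the Cartesian lifts of $C$ provided by the cleavage constructed in the previous paragraph. Preservation of each logical structure by $\epsilon_\rho$ is then transparent, since every chosen object of $\catE^R$ is of the form $(X, \text{cleavage})$ with $X \in \catE$ the original chosen object, and every chosen morphism is by construction the same base-category arrow as in $(\catC, \rho, \chi)$.

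The main obstacle, and the reason the algebraic refinement of pseudo-stability is indispensable, is the functoriality and naturality packaged into \cref{defn:pseudostable}. Without the functoriality clause in~(1), the candidate assignment $\sigma \mapsto \mathsf{Id}_{\sigma^*}(a, b)$ is only a (generally non-normal) choice of Cartesian arrows, so there is no object of $\catE^R$ at all to declare as the $\mathsf{Id}$-type; and without the commutativity of diagrams~(2) and~(3) holding for \emph{every} Cartesian arrow in $\catE$, the strict stability diagrams of \cref{defn:strictlystable} would fail at the reindexing Cartesian arrows appearing at each stage of the right adjoint splitting. This is precisely the extra content that is absent in ordinary weak factorisation systems and that the notion of type-theoretic awfs is engineered to supply.
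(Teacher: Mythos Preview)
Your proposal is correct and follows essentially the same approach as the paper: define the logical structure on $(\catC,\rho^R,\chi^R)$ by equipping each formed type with the local normal cleavage produced from the pseudo-stability action on Cartesian morphisms (normality coming from functoriality at the identity), keep the term-level data unchanged, and verify strict stability by checking that the two candidate local cleavages on a reindexed type agree, which again reduces to the functoriality clause of pseudo-stability. The paper spells out the $\Sigma$- and $\mathsf{Id}$-cases in exactly this way and omits $\Pi$ as analogous; the only point you leave slightly implicit is that clause~(1) of strict stability requires matching not just the underlying objects but also the two local cleavages, which is where the composition law in the functoriality clause (not just preservation of identities) is used.
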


\begin{proof} 
%
For $\Sigma$-types, let us consider a dependent tuple (see  \cref{sect:appendix} for definition) $(\Gamma, (A, A[-]), (B, B[-]))$ of $(\rho^R, \chi^R)$. The $\Sigma$-type associated to this tuple has the following form:
$(\Sigma_A B, \Sigma_A B [-])$
where $\Sigma_A B$ is the $\Sigma$-type given by the pseudo-stable choice of $(\rho, \chi)$ applied to $(\Gamma, A, B)$. The component at $\sigma \co \Delta \rightarrow \Gamma$ of the local cleavage $\Sigma_A B [-]$ is given as follows. First, we use the local cleavages $A[-]$ and $B[-]$ to construct a Cartesian arrow of dependent tuples 
$(\sigma, f^*, g^*) \co (\Delta, A[\sigma], B[\sigma]) \rightarrow (\Gamma, A, B)$ where the arrows $f^*$ and $g^*$ are given by the local cleavage (see \cref{localcleavage}). Then, we use the action on morphisms of the pseudo-stable choice of $\Sigma$-types to define:
\[
\xymatrix{
\Sigma_A B [\sigma] \co= \Sigma_{A[\sigma]} B[\sigma] \ar[rr]^(.65){\sigma^* \co= \Sigma_{f^*} g^*} \ar@{.}[d] \drpullback && \Sigma_A B \ar@{.}[d] \\
\Delta \ar[rr]_{\sigma} && \Gamma
}
\] 
This local cleavage is normal because the pseudo-stable choice is functorial.

We must show that this choice is strictly stable. By definition, for  $\sigma \co \Delta \rightarrow \Gamma$,
\begin{align*}
(\Sigma_A B , \Sigma_A B [-]) [\sigma] &= ((\Sigma_{A} B)[\sigma], (\Sigma_{A} B)[\sigma][-]) \quad \quad \text{(by def.~of the cleavage of $(\rho^R, \chi^R)$)}\\
					      &= (\Sigma_{A[\sigma]} B[\sigma], (\Sigma_{A} B)[\sigma][-]) \quad \quad \text{(by def.~of $\Sigma_A B [-] $)  \rlap{.}}
\end{align*}
It only remains to show that the local cleavages $(\Sigma_A B)[\sigma][-]$ and $(\Sigma_{A[\sigma]} B[\sigma]) [-]$ coincide, but this follows from the functoriality of the pseudo-stable choice of $\Sigma$-types in $(\rho, \chi)$. 

The construction for the case of dependent products or $\Pi$-types is completely analogous and hence omitted.

For $\mathsf{Id}$-types, note that the terms of a type $(A, A[-])$ in $(\catC^R, \rho^R, \chi^R)$ are
the same as the terms of $A$ in $(\catC, \rho, \chi)$. Let $(A, A[-])$ be an object in the fibre of $\rho^R$ over $\Gamma$, and consider sections $a, b \co \Gamma \to \Gamma.A$.  We need  an object in $\catE^R$ over $\Gamma.A.A$, which we 
denote as 
\[
\big( \mathsf{Id}_{A}(a, b),  \mathsf{Id}_A(a, b)[-] \big) 
\]
for brevity. The object $\mathsf{Id}_A(a, b)$ is obtained by applying the pseudo-stable choice of $\mathsf{Id}$-types in $(\rho, \chi)$ to $A$ and the sections $a, b \co \Gamma \to \Gamma.A$. 

To define the local normal cleavage $\mathsf{Id}_{A}(a, b)[-]$, consider $\sigma \co \Delta \rightarrow \Gamma$ in $\catC$. The local normal cleavage $A[-]$ gives a Cartesian arrow $\sigma^* \co A[\sigma] \rightarrow A$ over $\sigma$. Using the stable functorial choice of $\mathsf{Id}$-types of $(\rho, \chi)$, we get the Cartesian arrow 
\[
\mathsf{Id}_{\sigma^*}(a, b) \co \mathsf{Id}_{A[\sigma]}(a[\sigma], b[\sigma]) \rightarrow \mathsf{Id}_A(a, b) 
\] 
over $\sigma$. We then let  $\mathsf{Id}_{A}(a, b)[\sigma] := \mathsf{Id}_{A[\sigma]}(a[\sigma], b[\sigma])$, so that the Cartesian arrow to~$\mathsf{Id}_A(a, b)$ can be taken to be $\mathsf{Id}_{\sigma^*}(a, b)$. By pseudo-stability, if $\sigma = 1_A$ then 
\[
\mathsf{Id}_{A[\sigma]}(a[\sigma], b[\sigma]) = \mathsf{Id}_{A}(a, b) \, \quad \mathsf{Id}_{\sigma^*}(a, b) = 1_{\mathsf{Id}_{A}(a, b)} \rlap{,} 
\]
as required.  It remains to check that this choice is strictly stable. For $\sigma \co \Delta \rightarrow \Gamma$ and sections $a, b \co \Gamma \to \Gamma.A$, we must verify that:
\[
(\mathsf{Id}_A(a, b)[\sigma], \mathsf{Id}_A(a, b)[\sigma][-]) = (\mathsf{Id}_{A[\sigma]}(a[\sigma], b[\sigma]), \mathsf{Id}_{A[\sigma]}(a[\sigma], b[\sigma]) [-]).
\] 
By definition, $\mathsf{Id}_{A}(a, b)[\sigma]$ is given by the local normal cleavage $\mathsf{Id}_A[-]$ applied to the arrow $\sigma^*$, which  is $\mathsf{Id}_{A[\sigma]}(a[\sigma], b[\sigma])$. By the functoriality of the pseudo-stable choice of $\mathsf{Id}$-types, the local cleavages $\mathsf{Id}_A(a, b)[\sigma][-]$ and $\mathsf{Id}_{A[\sigma]}(a[\sigma], b[\sigma]) [-]$ coincide.
\end{proof}

In the next sections we show how to construct comprehension categories with pseudo-stable choices of $\Sigma$-, $\Pi$- and $\mathsf{Id}$-types. For $\mathsf{Id}$-types, the structure that arises more naturally 
in example corresponds to the so-called  variable-based formulation of $\mathsf{Id}$-types~\cite[Table~3]{Gambino_2008}, which is known to be equivalent to the usual formulation. We  define this variant in \cref{defn:choiceID} and then show that it is equivalent to the notion of~\cref{defn:idp}. 

\begin{definition} \label{defn:choiceID}
A \emph{choice of variable-based $\mathsf{Id}$-types} on a comprehension category $(\catC, \rho, \chi)$ consists of an operation that assigns a tuple $(\mathsf{Id}_A, r_A, j_A)$, to each object $A$ in the fibre over some~$\Gamma \in \catC$,  where:
\begin{enumerate}
\item $\mathsf{Id}_A$ is an object in the fibre over $\Gamma.A.A$,
\item $r_A$ is a section of $\mathsf{Id}_A$ over the diagonal morphism $\delta_A$, giving a factorisation 
\[
\xymatrix{
&& \Gamma.A.A.\mathsf{Id}_A \ar[d]^{\chi_{\mathsf{Id}_A}} \\
\Gamma.A \ar[rr]_-{\delta_A} \ar[urr]^{r_A} && \Gamma.A.A \rlap{,}
}
\]

\item \label{item:j} $j_A$ is an operation that takes a pair $(C, t)$, where $C$ is an object $C$ in the slice over $\Gamma.A.A.\mathsf{Id}_A$ and $t$  is a section of $C$ over $r_A$, as in the diagram of solid arrows
\[
\xymatrix{
\Gamma.A \ar[d]_{r_A} \ar[rr]^{t} && \Gamma.A.A.\mathsf{Id}_A.C \ar[d]^{\chi_C} \\
\Gamma.A.A.\mathsf{Id}_A \ar@{.>}[urr]|{j_A(C, t)} \ar@{=}[rr] && \Gamma.A.A.\mathsf{Id}_A  \rlap{,}
}
\]
to a section $j_A(C, t)$ of $C$ (shown as the dotted arrow) making both triangles commute. 
\end{enumerate}
We will refer to a choice of variable-based $\mathsf{Id}$-types by $(\mathsf{Id}_{v.b.}, r, j)$.
\end{definition}

\begin{definition} \label{defn:psid}
A choice of variable-based $\mathsf{Id}$-types $(\mathsf{Id}_{v. b.}, r, j)$ in a comprehension category is said to be \emph{pseudo-stable} if for any Cartesian arrow $f \co B \rightarrow A$ over a morphism $\sigma \co \Delta \rightarrow \Gamma$ in the base, the following conditions are satisfied.
\begin{enumerate}
\item There is a Cartesian arrow $\mathsf{Id}_f \co \mathsf{Id}_{B} \rightarrow \mathsf{Id}_A$ over the canonical morphism $\delta_f \co \Delta.B.B \rightarrow \Gamma.A.A$, and the assignment $f \mapsto \mathsf{Id}_f$ is functorial, i.e. $\mathsf{Id}_{1_A} = 1_{\mathsf{Id}_A}$ and $\mathsf{Id}_{f \circ g} = \mathsf{Id}_{f} \circ \mathsf{Id}_{g}$.
\item The following diagram commutes:
\[
\xymatrix{
\Delta.B \ar[rr]^{f} \ar[d]_{r_{B}} && \Gamma.A \ar[d]^{r_A} \\
\Delta.B.B.\mathsf{Id}_{B} \ar[rr]_{\mathsf{Id}_f} && \Gamma.A.A.\mathsf{Id}_A
}
\]
\item For any pair $(C, t)$ as in $(3)$ of \cref{defn:choiceID} and for any Cartesian $h \co C' \rightarrow C$ over $\mathsf{Id}_f$, we can construct a pair $(C', t')$ by pulling back $t$ along $h$ appropriately. We require that the following diagram commutes:
\[
\xymatrix{
\Delta.B.B.\mathsf{Id}_{B} \ar[d]_{j_{B}(C', t')} \ar[rrr]^{\mathsf{Id}_f} &&& \Gamma.A.A.\mathsf{Id}_A \ar[d]^{j_A(C,t)} \\
\Delta.B.B.\mathsf{Id}_{B}.C' \ar[rrr]_{h} &&& \Gamma.A.A.\mathsf{Id}_A.C \rlap{,}
}
\]
where the lower horizontal arrow is the (comprehension of the) Cartesian arrow $h \co C' \rightarrow C$.
\end{enumerate}
\end{definition}


\begin{proposition} \label{prop:psequiv}
A  cloven comprehension category $(\catC, \rho, \chi)$ is equipped with a pseudo-stable choice of $\mathsf{Id}$-types if and only if it is equipped with a pseudo-stable choice of variable-based $\mathsf{Id}$-types.
\end{proposition}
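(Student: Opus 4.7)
The plan is to exhibit a canonical back-and-forth translation between the two notions and verify that it preserves the pseudo-stability structure on both sides. In one direction, given a pseudo-stable choice $(\mathsf{Id}, r, j)$ of $\mathsf{Id}$-types in the sense of \cref{defn:idp}, I would define a variable-based choice by setting $\mathsf{Id}_A := \mathsf{Id}_{A[\chi_{A,A}]}(x, y)$, where $A$ is weakened twice to $\Gamma.A.A$ and $x, y \co \Gamma.A.A \to \Gamma.A.A.A$ are the canonical variables; the reflexivity section $r_A$ and eliminator $j_A$ are just those provided by the original choice applied at this universal instance. In the other direction, starting from $(\mathsf{Id}_{v.b.}, r, j)$, I would set $\mathsf{Id}_A(a, b) := \mathsf{Id}_A[\langle a, b\rangle]$, the reindexing of the variable-based object along the pairing $\langle a, b\rangle \co \Gamma \to \Gamma.A.A$ determined by the sections $a$ and $b$, with reflexivity and elimination obtained by suitable reindexing along this pairing (and along its lift to $\Gamma.A.A.\mathsf{Id}_A$).

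The first step is to verify that each translation lands in the correct definition, ignoring pseudo-stability. For the forward direction (standard to variable-based), the data (1)--(3) of \cref{defn:choiceID} are essentially those of \cref{defn:idp} specialised to the universal pair of variables, so the content is essentially vacuous. For the backward direction, I would check that the reindexed object, reflexivity term and eliminator satisfy (1)--(3) of \cref{defn:idp}; the commutativity and typing conditions follow from the pullback properties of Cartesian arrows and the universal property of the pairing $\langle a, b\rangle$. Next, I would argue that the two translations are mutually inverse up to the data specified: applying the backward translation to the forward translation of $(\mathsf{Id}, r, j)$ recovers the original on any pair of sections, precisely because pseudo-stability gives Cartesian arrows $\mathsf{Id}_{1_A}(a,b) = 1_{\mathsf{Id}_A(a,b)}$ when reindexing along $\langle a, b\rangle$; and applying the forward translation to the backward translation returns the same object over $\Gamma.A.A$ up to the normalisation of the cleavage along the identity.

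The heart of the proof is to transport the three pseudo-stability conditions across. Given a Cartesian $f \co B \to A$ over $\sigma \co \Delta \to \Gamma$, in the variable-based setting there is a Cartesian $\mathsf{Id}_f \co \mathsf{Id}_B \to \mathsf{Id}_A$ over $\delta_f \co \Delta.B.B \to \Gamma.A.A$. Reindexing this Cartesian arrow along a pairing $\langle a, b\rangle$ and using that $\delta_f \circ \langle a[\sigma], b[\sigma]\rangle = \langle a, b\rangle \circ \sigma$ produces the Cartesian arrow $\mathsf{Id}_f(a, b)$ of \cref{defn:pseudostable}(1); functoriality transfers verbatim. Conversely, the Cartesian $\mathsf{Id}_f$ needed for the variable-based pseudo-stability is obtained by applying $\mathsf{Id}_{f^{**}}(x, y)$ to the doubly-weakened Cartesian arrow and the canonical variables. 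Conditions (2) and (3) of the two definitions correspond to each other under these operations: condition (2) concerns the compatibility of reflexivity with $\mathsf{Id}_f$, which is preserved by reindexing; and condition (3) concerns compatibility of the eliminator with Cartesian arrows above $\mathsf{Id}_f$, which matches in both formulations once one observes that a Cartesian $h \co C' \to C$ above $\mathsf{Id}_f$ in the variable-based setting induces, after reindexing along $\langle a, b\rangle$, a Cartesian arrow above $\mathsf{Id}_f(a,b)$, and vice versa.

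The main obstacle will be the bookkeeping in condition (3), since one must carefully track how the pair $(C', t')$ obtained by pulling $t$ back along $h$ interacts with the two translations, and verify that the two resulting squares for $j$ literally coincide rather than merely commuting up to isomorphism. The key technical ingredient is that, by functoriality of the pseudo-stable choice, the Cartesian arrows $\mathsf{Id}_f$ and $\mathsf{Id}_f(a,b)$ are uniquely determined by their source and target together with the map in the base, so strict equalities between the two formulations follow once the underlying objects and arrows agree on the nose.
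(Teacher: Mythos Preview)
Your proposal follows essentially the same approach as the paper: in the forward direction, specialise to the universal pair of variables; in the backward direction, reindex the variable-based object along the pairing $\langle a,b\rangle$, and obtain $\mathsf{Id}_f(a,b)$ from $\mathsf{Id}_f$ via the universal property of the reindexing square.

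There is, however, one point where you are vague and where the paper is explicit, and it is exactly the step that makes the backward direction work. In \cref{defn:idp}, the operations $r_A$ and $j_A$ are \emph{not} indexed by the sections $a,b$: they are defined once and for all using the object $\mathsf{Id}_A(x,y)$ over $\Gamma.A.A$, built from the canonical variables. So in the backward direction you do not obtain $r$ and $j$ ``by suitable reindexing along this pairing''; rather, you must identify the object $\mathsf{Id}_A(x,y)$---which by your construction means: weaken $A$ twice to $\Gamma.A.A$, form the variable-based $\mathsf{Id}_{A[\chi_{A,A}]}$ over $\Gamma.A.A.A.A$, then reindex along $(x,y)$---with the original variable-based $\mathsf{Id}_A$ over $\Gamma.A.A$. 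The paper does this by applying the pseudo-stability of the variable-based choice to the Cartesian arrow over $\chi_{A,A}\co\Gamma.A.A\to\Gamma$, obtaining a Cartesian $\mathsf{Id}_{A[\chi_{A,A}]}\to\mathsf{Id}_A$ over $\delta_{\chi_{A,A}}\co\Gamma.A.A.A.A\to\Gamma.A.A$, and then observing that $\delta_{\chi_{A,A}}\circ(x,y)=1_{\Gamma.A.A}$, so the composite Cartesian arrow is an isomorphism $\mathsf{Id}_A(x,y)\cong\mathsf{Id}_A$. One then transports $r$ and $j$ along this isomorphism. Your remark about the round-trip ``up to normalisation of the cleavage along the identity'' gestures at this, but understates what is needed: it is not a single reindexing along an identity that collapses, but a composite of two non-trivial Cartesian arrows whose base maps compose to the identity.
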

\begin{proof}
Suppose we have a pseudo-stable choice of $\mathsf{Id}$-types. To construct a pseudo-stable choice of variable-based $\mathsf{Id}$-types, consider an object $A$ over $\Gamma$, we define $\mathsf{Id}_A := \mathsf{Id}_A(x, y)$ over $\Gamma.A.A$ using the canonical variables $x, y \co \Gamma.A.A \to \Gamma.A.A.A$. The operations $r$ and $j$ are given just as in \cref{defn:pseudostable}.

For the converse, assume a pseudo-stable choice of variable-based $\mathsf{Id}$-types. Consider an object $A$ over $\Gamma$ and sections $a, b \co \Gamma \to \Gamma.A$. We have $\mathsf{Id}_A$ over $\Gamma.A.A$, and thus, we can define $\mathsf{Id}_A(a, b)$ to be the reindexing of $Id_A$ along $(a, b)\co \Gamma \to \Gamma.A.A$, as in
\begin{equation}
\label{equ:idab}
\begin{gathered}
\xymatrix{
\mathsf{Id}_A(a, b) \ar[r] \ar@{.}[d] \drpullback & \mathsf{Id}_A \ar@{.}[d] \\
\Gamma \ar[r]_{(a, b)} & \Gamma.A.A \rlap{.} } 
\end{gathered}
\end{equation}
We can  extend this definition to provide the rest of the data in part~$(1)$ of \cref{defn:pseudostable}. Consider $f \co B \to A$ Cartesian over $\sigma \co \Delta \to \Gamma$, then we have
\[
\xymatrix{
\mathsf{Id}_{A[\sigma]}(a[\sigma], b[\sigma])  \ar@{.}[dd] \ar@{.>}[dr]^{\mathsf{Id}_f(a, b)} \ar[rr] && \mathsf{Id}_B \ar[dr]^{\mathsf{Id}_f} \ar@{.>}[dd]|(0.49)\hole|(0.30){} & \\
& \mathsf{Id}_A(a, b) \ar[rr] \ar@{.}[dd] && \mathsf{Id}_A \ar@{.}[dd] \\
\Delta \ar[rr]|(0.57)\hole_(0.75){(a[\sigma], b[\sigma])} \ar[dr]_{\sigma} && \Delta.B.B \ar[dr] & \\
& \Gamma \ar[rr]_{(a, b)} && \Gamma.A.A \rlap{.}
}
\]
where $\mathsf{Id}_f(a, b) \co \mathsf{Id}_{B}(a[\sigma], b[\sigma]) \rightarrow \mathsf{Id}_A(a, b)$ is given uniquely by the universal property of the square in~\eqref{equ:idab}. If $f$ (and $\sigma$) are identities, then by pseudo-stability of $\mathsf{Id}$-types, $Id_f \co Id_A \to Id_A$ is also the identity, and thus $\mathsf{Id}_f(a, b)$ must be the identity by the uniqueness property that characterises it. Similarly, the pseudo-stability of the variable-based $\mathsf{Id}$-types and the uniqueness property of $\mathsf{Id}_f(a, b)$ can be used to show that this operation preserves composition.

For the operations $r$ and $j$, we first consider the diagram
\[
\xymatrix{
\mathsf{Id}_A(x, y) \ar[rr]  \ar@{.}[d] \drpullback && \mathsf{Id}_A \ar[rr]^{Id_{\delta_{\chi_{A, A}}}}  \ar@{.}[d] \drpullback && \mathsf{Id}_A  \ar@{.}[d] \\
\Gamma.A.A \ar[rr]_{(x, y)} && \Gamma.A.A.A.A \ar[rr]_{\delta_{\chi_{A, A}}} && \Gamma.A.A \rlap{,}
}
\]
where the square on the right is obtained by the functoriality of the pseudo-stable choices of variable-based $\mathsf{Id}$-types to $\chi_{A, A} \co \Gamma.A.A \to \Gamma$. The square on the left is obtained by definition of $\mathsf{Id}_A(x, y)$ applied to the object $A$ weakened to $\Gamma.A.A$ and to the variables $x, y \co \Gamma.A.A \to \Gamma.A.A.A$. Both top horizontal arrows are Cartesian and the composition of the bottom two arrows equals the identity. Thus $\mathsf{Id}_A(x, y) \cong \mathsf{Id}_A$ as objects over $\Gamma.A.A$. We can then transport the operations $r$ and $j$ along this isomorphism.
\end{proof}

\begin{remark}
The reason for introducing two versions of identity types is that examples give rise more natually to
the variable-based $\mathsf{Id}$-types of \cref{defn:choiceID}, while the coherence theorem~\cref{thm:coht} 
is easier to prove constructively with the identity types of~\cref{defn:idp}. Indeed, if one works with variable-based $\mathsf{Id}$-types and tries to follow the argument used for $\Sigma$- and $\Pi$-types, the construction of the local cleavage  seems to require a case distiction on whether the argument is an identity or not to ensure functoriality. 
\end{remark}

Our goal in the reminder of the paper is to construct and study comprehension categories equipped
with pseudo-stable choices of  $\Sigma$-, $\Pi$- and $\mathsf{Id}$-types. By \cref{thm:coht}, these
will give rise to genuine models of Martin-L\"of type theory with $\Sigma$-, $\Pi$- and $\mathsf{Id}$-types.

\section{Algebraic weak factorisation systems} \label{sect:2-bis}

We review some of the basic theory on algebraic weak factorisation systems and on orthogonal categories of arrows~\cite{Bourke_2016, Bourke_2016_II,Garner_2009,Grandis_2006}. These will be the basis
for our definition of a type-theoretic algebraic weak factorisation system in~\cref{sect:3}. 

First of all, recall that a \emph{functorial factorisation} $(Q, L, R)$ on a category~$\catC$ consists of an operation that assigns to each arrow $f \co X \rightarrow Y$ a factorisation
\[
X \xrightarrow{Lf} Qf \xrightarrow{Rf} Y 
\] 
functorially in $f$. The induced endofunctors $L, R \co \catC^{\rightarrow} \rightarrow \catC^{\rightarrow}$ are canonically copointed and pointed respectively; that is, there are a counit $\epsilon \co L \rightarrow 1$ and a unit $\eta \co 1 \rightarrow R$.
We denote the category of $(R, \eta)$-algebras as $\Rmaps$ and the category of $(L , \epsilon)$-coalgebras by $\Lmaps$, and refer to their objects also as $R$-maps and $L$-maps, respectively. There are faithful, but not full, forgetful functors  to the arrow category, 
\[
\Lmaps \rightarrow \catC^{\rightarrow} \quad \text{and} \quad \Rmaps \rightarrow \catC^{\rightarrow} \rlap{.}
\]



Let $(g, \lambda) \co A \rightarrow B$ be an $L$-map, $(f, s) \co X \rightarrow Y$ an $R$-map  and $(h, k) \co g \rightarrow f$ a morphism in the arrow category. Then we can construct a  filler for the square $(h, k)$,
which is given by $j \co= s \cdot Q(h, k) \cdot \lambda$, where $Q(h, k) \co Qg \rightarrow Qf$ is the map obtained 
by applying the functorial factorisation to $(h, k)$. These  fillers satisfy naturality conditions with respect to morphism of $L$-maps and $R$-maps. 


%
\begin{definition} \label{defn:awfs}
An \emph{algebraic weak factorisation system} (\emph{awfs} for short) on a category~$\catC$ consists of the following data:
\begin{enumerate}
\item a functorial factorisation $(Q, L, R)$ on $\catC$,
\item an extension of the pointed endofunctor $(R, \eta)$ to a monad $(R, \eta, \mu)$,
\item an extension of the copointed endofuctor $(L , \epsilon)$ to a comonad $(L , \epsilon, \delta)$,
\item \label{item:DL} the canonical map $\Delta \co LR \rightarrow RL$ defined using the monad and comonad structure is a distributive law.
\end{enumerate}
\end{definition}

We refer to an awfs as in \cref{defn:awfs} just as $(L, R)$. 
Item (\ref{item:DL}) of \cref{defn:awfs} is a technical requirement which plays no role in this work. Given
an awfs as \cref{defn:awfs}, we have also the category of algebras for the monad $(R, \eta, \mu)$,
which we denote $\Ralg$, and the category of coalgebras for the comonad $(L , \epsilon, \delta)$,
which we denote $\Lalg$. We refer to the objects of $\Ralg$ and $\Lalg$  as $R$-algebras and~$L$-coalgebras, respectively. There are full and faithful functors $\Ralg \hookrightarrow \Rmaps $ and $\Lalg \hookrightarrow \Lmaps$. 

\begin{remark} \label{thm:vert-comp-pbk-awfs}
The category $\Ralg$ (and also $\Rmaps$) is closed under `vertical' composition; that is if $(f , s) \co X \rightarrow Y$ and $(f', s') \co Y \rightarrow Z$ are $R$-algebras then there is a canonical $R$-algebra structure $s' \cdot s$ on the composite $f' \cdot f$. In fact, finding such a vertical composition operation provides a complete characterisations of the awfs \cite[Theorem~4.15]{Barthel_2013}. 

Also, for an $R$-algebra $(f, s)$ and a pullback square $(h , k) \co f' \rightarrow f$ then, there exists a unique $R$-algebra structure $s'$ on $f'$ making $(h ,k)$ a morphism of $R$-algebras. The same result holds for $R$-maps. 
\end{remark}

We recall some notions regarding categories of arrows and of orthogonality in the setting of awfs's. By a \emph{category of arrows} over $\icat{C}$ we mean a functor $u \co \lcat{J} \rightarrow \icat{C}^\rightarrow$ where~$\lcat{J}$ is a category.  A \emph{right $\lcat{J}$-map} consists of a pair $(f , \theta)$ where $f \co X \rightarrow Y$ is an arrow of $\icat{C}$ and $\theta$ is a right lifting operation against $\lcat{J}$, {i.e.}~$\theta$ assigns a filler $\theta(i)$ to each commutative square of the form $(l,m) \co u_i \rightarrow f$, with $i \in \lcat{J}$. These fillers, in addition, are compatible with the arrows in $\lcat{J}$ in the evident way. 

Given a pair of right $\Jay$-maps $(f, \theta)$ and $(f', \theta')$, a \emph{right $\Jay$-map morphism} consists of a square $(\alpha, \beta) \co f \rightarrow f'$ such that for every $i \in \Jay$, the triangle created by the corresponding choices of diagonal fillers commute.
Given  a category of arrows $u \co \Jay \rightarrow \catC^{\rightarrow}$, we define the category $\Jay^{\boxslash}$ consisting of right $\Jay$-maps $(f, \theta)$ together with the corresponding morphisms. There is a functor
$
u^{\boxslash} \co \Jay^{\boxslash} \rightarrow \catC^{\rightarrow}
$
forgetting the lifting structure. It can be shown that this operation defines a contravariant functor denoted by $(-)^{\boxslash}$. In a completely analogous manner, we can define the concepts of {left $\Jay$-map} and {left $\Jay$-map morphism}, and obtain a dual functor $^{\boxslash}\!(-)$. This data constitutes the \emph{orthogonality adjunction}, which generalises the classical Galois connection between orthogonal classes of maps:
\begin{equation}
\label{equ:orth-adj}
\xymatrix{
\CAT / \catC^{\rightarrow} \ar@{}[rr]|{\bot} \ar@/^0.7pc/[rr]^{^{\boxslash}\!(-)} && (\CAT / \catC^{\rightarrow})^{op} \ar@/^0.7pc/[ll]^{(-)^{\boxslash}} \rlap{.}
}
\end{equation}
The next proposition~\cite[Lemma 1]{Bourke_2016} relates awfs and orthogonal categories of arrows.

\begin{proposition} \label{prop:liftings}
Let $(L, R)$ be an awfs on $\catC$. Then, there are lifting functors over $\catC^{\rightarrow}$ as shown in the following commutative diagram:
\[
\xymatrix{
\Ralg \ar[rr]^{\lift} \ar[d] && (\Lalg)^{\boxslash} \\
\Rmaps \ar[rru]^{\lift}_{\cong} \ar[rr]_{\lift} && (\Lmaps)^{\boxslash} \ar[u]
}
\]
All functors are full and faithful and the diagonal one is an isomorphism. There is also a functor $(\Lmaps)^{\boxslash} \rightarrow \Rmaps$, which is not an equivalence in general. \qed
\end{proposition}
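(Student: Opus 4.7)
The plan is to construct each functor in the displayed diagram by the filler formula recalled in the excerpt, to read off commutativity from the construction, to obtain faithfulness automatically from the fact that every category in the diagram sits concretely over $\catC^{\rightarrow}$, and then to reduce both fullness and the isomorphism claim to the existence of a canonical $L$-coalgebra structure on $Lf$ given by the comonad comultiplication.

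For the construction, given an $R$-map $(f, s)$, an $L$-map $(g, \lambda)$, and a square $(h, k) \co g \to f$, I take the filler $j := s \cdot Q(h, k) \cdot \lambda$. The two triangle identities follow from the $R$-map axiom $s \cdot Lf = 1$, the $L$-map axiom $Rg \cdot \lambda = 1$ and the factorisation identities $Rf \cdot Lf = f$, $k \cdot g = f \cdot h$; functoriality of $Q$ gives compatibility with morphisms of $L$-maps. This produces the bottom functor $\Rmaps \to (\Lmaps)^{\boxslash}$, and restriction along the fully faithful inclusions $\Ralg \hookrightarrow \Rmaps$ and $\Lalg \hookrightarrow \Lmaps$ yields the remaining three functors; the square commutes on the nose, with the rightmost vertical being restriction of lifting operations along $\Lalg \hookrightarrow \Lmaps$. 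Faithfulness is automatic because each functor preserves the underlying square of a morphism.

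For the diagonal isomorphism $\Rmaps \xrightarrow{\cong} (\Lalg)^{\boxslash}$, I construct an explicit inverse. Given $(f, \theta) \in (\Lalg)^{\boxslash}$, the map $Lf$ carries a canonical $L$-coalgebra structure $\delta_f$ coming from the comultiplication of the comonad, and the counit supplies the canonical square $\varepsilon_f = (1, Rf) \co Lf \to f$; setting $s := \theta\bigl((Lf, \delta_f), \varepsilon_f\bigr)$ the two filler triangles read precisely as the $R$-map axioms $s \cdot Lf = 1$ and $f \cdot s = Rf$. To check mutual inverseness, the forward-then-backward roundtrip on $(f, s)$ returns $s \cdot Q(1, Rf) \cdot \delta_f$, which collapses to $s$ by the codomain component of the comonad counit law $L\varepsilon_f \cdot \delta_f = 1_{Lf}$, namely $Q(1, Rf) \cdot \delta_f = 1_{Qf}$; conversely, given $(f, \theta)$, any square $(h, k) \co (g, \lambda) \to f$ with $(g, \lambda) \in \Lalg$ factors through $\varepsilon_f$ via a canonical $\Lalg$-morphism $(g, \lambda) \to (Lf, \delta_f)$, so naturality of $\theta$ reconstructs the whole operation from its value at the canonical square.

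Fullness of each of the four displayed functors reduces to the same observation: testing a candidate morphism against $(Lf, \delta_f)$ and $\varepsilon_f$ forces the $R$-map (or $R$-algebra) morphism axiom. Finally, the extra functor $(\Lmaps)^{\boxslash} \to \Rmaps$ is the composite of the restriction $(\Lmaps)^{\boxslash} \to (\Lalg)^{\boxslash}$ with the inverse of the diagonal isomorphism; it is fully faithful but not essentially surjective in general, since not every $\Lalg$-lifting extends to a lifting against all $L$-maps. The main technical obstacle is the identity $Q(1, Rf) \cdot \delta_f = 1_{Qf}$ that closes the roundtrip, which comes from unpacking the counit law of the comonad $L$ in the arrow category; the remaining verifications are routine diagram chases.
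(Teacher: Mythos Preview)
Your argument is correct and is essentially the standard proof of this fact, due to Bourke and Garner; the paper itself does not give a proof but simply cites \cite[Lemma~1]{Bourke_2016}. The one point worth making explicit is that the factorisation of a square $(h,k)\co (g,\lambda)\to f$ through $\varepsilon_f$ via an $\Lalg$-morphism $(g,\lambda)\to(Lf,\delta_f)$ uses precisely the cofree property of $(Lf,\delta_f)$, and hence the coassociativity axiom on $\lambda$; this is why the isomorphism is with $(\Lalg)^{\boxslash}$ rather than $(\Lmaps)^{\boxslash}$.
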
 

We say that an awfs $(L, R)$ is \emph{algebraically-free} on a category of arrows $\Jay$ if there is a functor $\eta \co \Jay \rightarrow \Lalg$ over $\catC^{\rightarrow}$, such that the composition
\[
\xymatrix{
\Ralg \ar[r]^(.40){\lift} & (\Lalg)^{\boxslash} \ar[r]^(.65){\eta^{\boxslash}} & (\Jay)^{\boxslash}
}
\]
is an isomorphism of categories, cf.~\cite[Theorem~4.4]{Garner_2009}.
The following result regarding algebraically-free awfs is implicit in the literature (cf. \cite[Theorem~6.9]{Gambino_2017} for example).

\begin{proposition} \label{prop:baf}
If $(L, R)$ is algebraically-free on some category of arrows $\Jay$, then there are 
functors back-and-forth $\Rmaps \leftrightarrow \Ralg$ over  $\catC^{\rightarrow}$.
\qed 
\end{proposition}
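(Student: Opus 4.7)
My plan is to build the two functors by assembling pieces already made available by \cref{prop:liftings} and the definition of algebraic-freeness; no genuinely new construction is required. The direction $\Ralg \to \Rmaps$ is immediate: it is the canonical forgetful inclusion noted before \cref{thm:vert-comp-pbk-awfs}, which sends an $R$-algebra $(f,s)$ to the underlying $R$-map (obtained by restricting $s$ along the unit $\eta \co 1 \to R$), and this functor evidently lives over $\catC^{\rightarrow}$.

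For the interesting direction $\Rmaps \to \Ralg$, I would take the composite
\[
\Rmaps \xrightarrow{\lift} (\Lalg)^{\boxslash} \xrightarrow{\eta^{\boxslash}} \Jay^{\boxslash} \xrightarrow{\Phi^{-1}} \Ralg \rlap{,}
\]
where the first arrow is the diagonal isomorphism in the triangle of \cref{prop:liftings}, the second is obtained by applying the contravariant functor $(-)^{\boxslash}$ to the structure functor $\eta \co \Jay \to \Lalg$ supplied by algebraic-freeness, and $\Phi \co \Ralg \to \Jay^{\boxslash}$ is the isomorphism $\eta^{\boxslash} \circ \lift$ supplied by algebraic-freeness itself. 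Unwinding the definitions, this sends an $R$-map $(f, s)$ first to the canonical lifting operation of $f$ against all $L$-coalgebras (induced from $s$ via the isomorphism $\Rmaps \cong (\Lalg)^{\boxslash}$), then restricts it along $\eta$ to a lifting operation against the generating family~$\Jay$, and finally applies $\Phi^{-1}$ to extract an honest $R$-algebra structure on $f$.

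Each functor in the composite is over $\catC^{\rightarrow}$: the two lift functors and $\eta^{\boxslash}$ preserve underlying arrows by construction in the orthogonality adjunction \eqref{equ:orth-adj}, and $\Phi^{-1}$ does because $\Phi$ does. The only formal check required is that $\eta^{\boxslash}$ genuinely lands in $\Jay^{\boxslash}$ and respects morphisms of right maps, which is precisely the naturality clause built into the contravariant functor $(-)^{\boxslash}$. Since the entire argument is a diagram chase through \cref{prop:liftings} and the algebraic-freeness hypothesis, I anticipate no substantive obstacle; the only subtle point worth noting is that the two functors produced are typically not inverse (nor even adjoint), consistent with the statement only asserting the existence of functors in both directions.
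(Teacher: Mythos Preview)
Your proposal is correct and is precisely the argument the paper leaves implicit: the statement is given without proof (marked \qed, with a reference to \cite[Theorem~6.9]{Gambino_2017}), and your composite $\Rmaps \cong (\Lalg)^{\boxslash} \xrightarrow{\eta^{\boxslash}} \Jay^{\boxslash} \cong \Ralg$ is exactly the intended construction. Your closing observation that the two functors need not be mutually inverse is also on point.
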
 

This proposition shows that when working with an algebraically-free awfs $(L, R)$ (as will be the case in \cref{sect:UF}), any construction made using $R$-maps can be functorially transported to a construction using $R$-algebras, and viceversa.     

\section{Type-theoretic awfs's} \label{sect:3}

In this section we introduce the notion of a type-theoretic awfs. We then 
show how a type-theoretic awfs induces a comprehension category structure equipped with pseudo-stable choices of $\Sigma$-, $\Pi$- and $\mathsf{Id}$-types. We begin by making the connection between awfs and comprehension categories. 

\begin{lemma} \label{lemma:RmapGF}
Let $(L,R)$ be an awfs over $\catC$. The functor $\Rmaps \rightarrow \catC$ mapping an $R$-map $(f, s)$ to $cod(f)$ is a Grothendieck fibration. Moreover, the Cartesian arrows are the morphisms of $R$-maps whose underlying square is a pullback square. \qed
\end{lemma}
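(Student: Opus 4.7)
The plan is to construct Cartesian lifts directly from pullbacks, invoking the pullback-stability of $R$-maps from \cref{thm:vert-comp-pbk-awfs}, and then to verify the universal property by a pullback diagram chase.

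First, given an $R$-map $(f,s) \co X \to Y$ and a morphism $u \co Y' \to Y$, I would form the pullback $f' \co X \times_Y Y' \to Y'$ of $f$ along $u$, with pullback square $(h,u) \co f' \to f$ in $\catC^{\rightarrow}$. By the pullback-stability clause of \cref{thm:vert-comp-pbk-awfs}, there is a unique $R$-map structure $s'$ on $f'$ making $(h,u) \co (f',s') \to (f,s)$ a morphism in $\Rmaps$. Unwinding the construction, $s'$ is the unique map $Qf' \to X \times_Y Y'$ whose projections are $s \circ Q(h,u)$ and $Rf'$; equivalently it is characterised by $h \cdot s' = s \cdot Q(h,u)$ and $f' \cdot s' = Rf'$.

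Next, I would verify that $(h,u)$ is Cartesian over $u$. Take any morphism $(\alpha,\beta) \co (g,t) \to (f,s)$ in $\Rmaps$ and a factorisation $\beta = u \circ \gamma$ in $\catC$. The pullback universal property produces a unique $\alpha' \co \dom(g) \to X \times_Y Y'$ with $h\alpha' = \alpha$ and $f'\alpha' = \gamma g$, giving a commutative square $(\alpha',\gamma) \co g \to f'$ in $\catC^{\rightarrow}$ that uniquely lifts $(\alpha,\beta)$ through $(h,u)$. To see that $(\alpha',\gamma)$ is actually a morphism of $R$-maps, I would verify $\alpha' \cdot t = s' \cdot Q(\alpha',\gamma)$ by postcomposing with the projections $h$ and $f'$ and using that they are jointly monic: postcomposing with $h$ reduces to $\alpha \cdot t = s \cdot Q(\alpha,\beta)$, which holds because $(\alpha,\beta)$ is a morphism of $R$-maps; postcomposing with $f'$ reduces, via $f's' = Rf'$ and the naturality of $R$, to $\gamma \cdot g \cdot t = \gamma \cdot Rg$, which holds because $g \cdot t = Rg$. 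Uniqueness of $(\alpha',\gamma)$ as a morphism of $R$-maps is already forced by uniqueness at the level of underlying squares.

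For the characterisation of Cartesian arrows, any Cartesian lift of $u$ is canonically isomorphic in the fibre over $Y'$ to the one built above, and since the underlying square of the latter is a pullback, and isomorphisms in $\catC^{\rightarrow}$ preserve the pullback property, the underlying square of any Cartesian arrow is a pullback. Conversely, if $(\alpha,\beta)$ is a morphism of $R$-maps whose underlying square is a pullback, then by the uniqueness of the induced $R$-map structure (\cref{thm:vert-comp-pbk-awfs}) the source $R$-map agrees with the one furnished by the construction above, so $(\alpha,\beta)$ coincides with the canonical Cartesian lift over $\beta$ and is itself Cartesian.

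The only step that requires genuine work is the compatibility check in the second paragraph; the rest is bookkeeping with pullbacks and the uniqueness clause of \cref{thm:vert-comp-pbk-awfs}.
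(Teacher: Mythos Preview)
Your proof is correct. The paper omits the proof entirely (the statement is marked with \qed), so there is no argument to compare against; your approach is exactly the natural one implicit in the paper, relying on the pullback-stability clause of \cref{thm:vert-comp-pbk-awfs} to produce Cartesian lifts and then verifying the universal property via the jointly monic pullback projections.
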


\begin{proposition} \label{prop:AWFSCC}
Let $(L, R)$  be an awfs on a category $\catC$. Then there is a comprehension category
\[
\xymatrix{
\Rmaps \ar[rr]^U \ar[dr] && \catC^{\rightarrow} \ar[dl]^{cod} \\
& \catC \, , &
}
\]
where $U$ is the evident forgetful functor. \qed
\end{proposition}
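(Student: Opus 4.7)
The proof is essentially an unpacking of \cref{lemma:RmapGF}, so the plan is to identify the three conditions that a comprehension category must satisfy and check each against the lemma.

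First, I would make the data explicit: the functor $\rho \co \Rmaps \to \catC$ is the composite of $U$ with the codomain functor, so the triangle
\[
\xymatrix{
\Rmaps \ar[rr]^U \ar[dr]_{\rho} && \catC^{\rightarrow} \ar[dl]^{\text{cod}} \\
& \catC &
}
\]
commutes strictly by construction, as $\rho(f, s) = \text{cod}(f) = \text{cod}(U(f, s))$ on objects, and analogously on morphisms. This disposes of the commutativity requirement in the definition of a comprehension category.

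Next, I would invoke \cref{lemma:RmapGF} directly to conclude that $\rho$ is a Grothendieck fibration. Finally, for the requirement that $U$ sends Cartesian arrows in $\Rmaps$ over $\rho$ to pullback squares in $\catC$, I would again appeal to the second assertion of \cref{lemma:RmapGF}, which characterises Cartesian arrows of $\rho$ precisely as those morphisms of $R$-maps whose underlying commutative square in $\catC^{\rightarrow}$ is a pullback. Since $U$ extracts exactly this underlying square, the image of a Cartesian arrow under $U$ is a pullback, as required.

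There is no real obstacle here: the content of the proposition is packaged entirely inside \cref{lemma:RmapGF}, and the proposition amounts to rephrasing that lemma in the language of comprehension categories. The only thing to be slightly careful about is noting that the cleavage induced on $\rho$ by \cref{lemma:RmapGF} is not generally split (since pullbacks in $\catC$ are not canonically chosen), which is precisely why the coherence machinery of \cref{sect:2} will later need to be applied via the right adjoint splitting.
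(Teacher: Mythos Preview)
Your proposal is correct and matches the paper's approach: the paper marks this proposition with a \qed\ and gives no proof, treating it as an immediate consequence of \cref{lemma:RmapGF}, which is precisely what you unpack. Your final remark about the cleavage not being split is accurate and well placed, though not strictly part of the proof.
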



Results analogous to~\cref{lemma:RmapGF} and \cref{prop:AWFSCC} hold for also for~$\Ralg$.
Next, we study additional logical structure on the comprehension category induced by an awfs. 

\begin{proposition} \label{prop:sigmaAWFS}
Let $(L, R)$ be an awfs on $\catC$. Then the comprehension category induced by $(L, R)$ is equipped with a pseudo-stable choice of $\Sigma$-types.
\end{proposition}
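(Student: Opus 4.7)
The plan is to realise $\Sigma$-types on the comprehension category via the vertical composition of $R$-maps described in \cref{thm:vert-comp-pbk-awfs}. Given a dependent tuple $(\Gamma, A, B)$, the objects $A$ and $B$ correspond to $R$-maps $(f \co \Gamma.A \to \Gamma, s)$ and $(g \co \Gamma.A.B \to \Gamma.A, t)$ respectively. I would define $\Sigma_A B$ to be the vertical composite $(f \circ g, s \cdot t)$, which is canonically an $R$-map over $\Gamma$ whose underlying object is $\Gamma.A.B$. Since $\Gamma.\Sigma_A B = \Gamma.A.B$, the pairing operation can be taken as the identity on $\Gamma.A.B$ at the level of the base category, and the splitting operation $sp$ is induced in the evident way.

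For pseudo-stability, I would exploit the characterisation of Cartesian arrows in $\Rmaps$ from \cref{lemma:RmapGF}: a morphism of $R$-maps is Cartesian precisely when its underlying square is a pullback. Given a Cartesian morphism of dependent tuples $(\sigma, f^*, g^*) \co (\Delta, A[\sigma], B[\sigma]) \rightarrow (\Gamma, A, B)$ over $\sigma \co \Delta \to \Gamma$, both the square witnessing $f^*$ and the square witnessing $g^*$ are pullback squares, and by the pasting lemma their vertical paste is again a pullback square from $\Sigma_{A[\sigma]} B[\sigma]$ to $\Sigma_A B$. By the uniqueness clause in \cref{thm:vert-comp-pbk-awfs}, this underlying pullback square carries a unique $R$-map structure making it a morphism of $R$-maps, and by inspection this structure coincides with the vertical composition of the $R$-algebra structures on $f^*$ and $g^*$. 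This produces the Cartesian arrow $\Sigma_{f^*} g^*$ required for pseudo-stability.

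The functoriality of the assignment $(f^*, g^*) \mapsto \Sigma_{f^*} g^*$ follows from the uniqueness part of \cref{thm:vert-comp-pbk-awfs}: both $\Sigma_{(f^* \circ h^*)} (g^* \circ k^*)$ and $\Sigma_{f^*} g^* \circ \Sigma_{h^*} k^*$ are $R$-map structures on the same underlying composite pullback square, hence must agree; the identity case is immediate. The compatibility conditions for the pair and split operations amount to the fact that these operations are essentially identity maps at the level of underlying arrows, so they commute with Cartesian arrows automatically.

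The main obstacle is a bookkeeping one: the fact that $\Sigma_A B$ and $\Gamma.A.B$ have the same underlying object makes several of the required diagrams collapse to triviality, but one must be careful to formulate pseudo-stability with respect to the official $\Sigma$-type structure from the appendix and to verify that the two different $R$-map structures one can put on a Cartesian lift of the composite indeed coincide; this is where the second part of \cref{thm:vert-comp-pbk-awfs} plays the essential role, and it is also the point at which the original inaccuracy flagged in the acknowledgements presumably had to be addressed.
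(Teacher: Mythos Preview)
Your proposal is correct and follows essentially the same approach as the paper: form $\Sigma_A B$ by the vertical composition of $R$-maps from \cref{thm:vert-comp-pbk-awfs}, take $pair$ to be the identity, take $sp(C,t) = t$, and obtain pseudo-stability from the compatibility of vertical composition with pullback squares. The only presentational difference is that where you argue ``by inspection'' that the pulled-back structure on the composite agrees with the composite of the pulled-back structures (and then use uniqueness for functoriality), the paper simply points to the double-categorical interchange of vertical and horizontal composition in $\Rmaps$ established in \cite[Section~2.8]{Bourke_2016}; your inspection step is precisely this interchange, so the content is the same.
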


\begin{proof} Let $(f, s) \co X \to \Gamma$ and $(g, t) \co Y \to X$ be in~$\Rmaps$. 
The pullback functor along~$f  \colon X \to \Gamma$ has a left adjoint $\Sigma_f \co  \catC/X \rightarrow  \catC/\Gamma$, which is given by composition. By  \cref{thm:vert-comp-pbk-awfs}, this functor lifts to slices of $\Rmaps$, as follows
\[
\xymatrix{
\Rmaps/X \ar[r]^-{\Sigma_f}  \ar[d] & \Rmaps/\Gamma  \ar[d] \\ 
\catC/X \ar[r]_-{\Sigma_f} & \catC/\Gamma \rlap{.} }
\]
For the formation rule, we define $\Sigma_f g \co= \Sigma_f (g) = f \circ g\co Y \to \Gamma$.
For the introduction rule, we define $pair_{f,g} \co Y \rightarrow Y$ over $f$, 
\[
\xymatrix@C=1.5cm{
Y \ar[r]^{pair_{f,g}} \ar[d]_{g} & Y \ar[d]^{\Sigma_f g} \\
X \ar[r]_{f} & \Gamma
}
\]
by letting $pair_{f,g} \co= 1_{Y}$. Finally, for the elimination rule, let $C$ be over $\Sigma_f g$ and let $t$ be a section of $C$ over $pair_{f, g}$, we define~$sp_{f,g}(C,t) \co= t$. The computation rule holds trivally, thus giving rise to a choice of~$\Sigma$-types. Stable functoriality and coherence of elimination terms also follow easily, the crucial observation is that vertical composition of $R$-maps plays nicely with the horizontal categorical structure (see \cite[Section~2.8]{Bourke_2016}). 
\end{proof}

The case of $\Pi$-types requires the following property.

\begin{definition} \label{defn:expon}
An awfs $(L,R)$ on  $\catC$ satisfies the {\em exponentiability property} if  for any~$g \co Z \rightarrow Y$,
$f \co Y \rightarrow X$ in the image of $\Rmaps \rightarrow \catC^{\rightarrow}$, the exponential $\Pi_fg \in \catC/X$ exists. 
\end{definition}

Clearly, any awfs in a locally Cartesian closed category satisfies the exponentiability property. We need something more than mere exponentiability, namely a way to coherently lift an exponential 
from $\catC/X$ to  $\Rmaps/X$. For this reason we recall the following  notion from~\cite{Gambino_2017}.

\begin{definition} \label{defn:FFstructure}
Let $(L, R)$ be an awfs on a category $\catC$. A {\em functorial Frobenius structure} is given by a lift of the pullback functor as shown:
\[
\xymatrix{
\Rmaps \times_{\catC} \Lmaps \ar@{..>}[rr]^{\tilde{PB}} \ar[d] && \Lmaps \ar[d] \\
\catC^{\rightarrow} \times_{\catC} \catC^{\rightarrow} \ar[rr]_{PB} && \catC^{\rightarrow} \, ,
}
\]
where $PB(f, g)$ denotes the pullback of $g$ along $f$.
\end{definition}


\begin{proposition} \label{prop:piAWFS}
Consider an awfs $(L, R)$ on $\catC$ satisfying the exponentiability property and equipped with a functorial Frobenius structure. Then the comprehension category induced by $(L, R)$ has a pseudo-stable choice of $\Pi$-types.
\end{proposition}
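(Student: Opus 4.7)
The plan is to use the adjunction $f^{*} \dashv \Pi_{f}$ to transfer structure from the $R$-map $g$ to a putative $R$-map $\Pi_f g$, with the functorial Frobenius lift providing the crucial compatibility. Fix $A = (f, s) \co X \to \Gamma$ and $B = (g, t) \co Z \to X$ in $\Rmaps$. The first step is to define the underlying arrow: by the exponentiability property, the exponential $\Pi_{f} g \to \Gamma$ exists in $\catC/\Gamma$, and we take $\Pi_{A} B$ to be this object together with an $R$-map structure defined as follows.

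The heart of the construction lies in equipping $\Pi_{f} g$ with an $R$-map structure. By \cref{prop:liftings}, this is equivalent to specifying, functorially, a diagonal filler for each square $(u, v) \co l \to \Pi_{f} g$ where $l \co A' \to B'$ is an $L$-map. Such a square transposes under the adjunction $f^{*} \dashv \Pi_{f}$ to a square $(\tilde u, \tilde v) \co f^{*} l \to g$ in $\catC^{\rightarrow}$. The functorial Frobenius structure of \cref{defn:FFstructure} equips $f^{*} l$ with the structure of an $L$-map, and since $(g, t)$ is an $R$-map we obtain a canonical diagonal filler for $(\tilde u, \tilde v)$. Transposing this filler back along the adjunction yields the required filler for $(u, v)$. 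Naturality in morphisms of $L$-maps is inherited from the functoriality of $\tilde{PB}$ and of the adjunction.

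The third step is to define the introduction and elimination operations. A section of $\Pi_{f} g$ over $\Gamma$ corresponds, via $f^{*} \dashv \Pi_{f}$, to a section of $g$ over $X$; this bijection defines $\lambda$ and $\mathrm{app}$ and at once verifies the computation rule. Coherence of these operations with the comprehension structure is automatic from the adjoint correspondence.

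The final step, and the one I expect to present the main bookkeeping obstacle, is pseudo-stability. Given a Cartesian arrow $(u, v) \co (f', s') \to (f, s)$ over $\sigma \co \Gamma' \to \Gamma$ and a Cartesian arrow $(g', t') \to (g, t)$ over $u$, Beck--Chevalley identifies $\Pi_{f'} g'$ with $\sigma^{*} \Pi_{f} g$ in $\catC^{\rightarrow}$; the task is to check that the $R$-map structure constructed above on $\Pi_{f'} g'$ agrees with the one obtained by pulling back $\Pi_{f} g$ along $\sigma$ (as in \cref{thm:vert-comp-pbk-awfs}), so that the identification is Cartesian in $\Rmaps$ and functorial in Cartesian morphisms. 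This reduces to showing that the adjoint-transposed lifting operation commutes with pullback, which follows from the functoriality of $\tilde{PB}$ together with the naturality of the $f^{*} \dashv \Pi_{f}$ adjunction and the Beck--Chevalley isomorphism. The corresponding coherence of $\lambda$ and $\mathrm{app}$ under Cartesian morphisms is an immediate consequence of these naturality statements.
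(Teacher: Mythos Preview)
Your approach is essentially the same as the paper's: both construct the $R$-map structure on $\Pi_f g$ by transposing lifting problems across $f^* \dashv \Pi_f$, using the functorial Frobenius structure to give the pulled-back left map its $L$-map structure, and both invoke Beck--Chevalley for pseudo-stability. The paper outsources the key step to \cite[Propositions~6.5 and~6.7]{Gambino_2017}, obtaining a lift $\Pi_f \co (\Lmaps)^{\boxslash}/X \to (\Lmaps)^{\boxslash}/\Gamma$ and then transferring via the functors $\Rmaps \leftrightarrow (\Lmaps)^{\boxslash}$ of \cref{prop:liftings}; you have simply unpacked that argument.

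One small imprecision: you write that an $R$-map structure is ``equivalent'' to specifying fillers against $L$-maps, but \cref{prop:liftings} gives $\Rmaps \cong (\Lalg)^{\boxslash}$, not $(\Lmaps)^{\boxslash}$; the latter only admits a functor into $\Rmaps$. This does not affect your construction, since building a $(\Lmaps)^{\boxslash}$-structure and then applying that functor is exactly what is needed (and is what the paper does), but the word ``equivalent'' overstates it. Also, when you write ``$f^* l$'', note that the relevant pullback is along the base change of $f$ to $B'$ (itself an $R$-map by \cref{thm:vert-comp-pbk-awfs}), which is what makes the Frobenius lift applicable.
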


\begin{proof}
Consider  $(f, s) \co X \rightarrow \Gamma$ in $\Rmaps$. By the exponentiability property,
we have a pushforward functor $\Pi_f \co \category{R}/X \to \catC/\Gamma$ (here $\category{R}/X$ denotes the slice category whose objects are arrows $g \co Y \to X$ that can be equipped with an $R$-map structure) and, by~\cite[Proposition~6.5 and Proposition~6.7]{Gambino_2017} this lifts to a functor
\[
\Pi_f\co (\Lmaps)^{\boxslash}/X \rightarrow (\Lmaps)^{\boxslash}/\Gamma \, .
\]
Using the functors $\Rmaps \leftrightarrow \Lmaps^\boxslash$ of \cref{prop:liftings} 
we can find a lift of $\Pi_A$ as follows:
\[
\xymatrix{
 \Rmaps/{X} \ar[r]^{\Pi_f} \ar[d] &  \Rmaps / {\Gamma} \ar[d] \\
 \catC / {X} \ar[r]_{\Pi_f} & \catC /{\Gamma} \rlap{.} }
\]

Consider an $R$-map $(g ,t) \co Y \to X$. For
the formation rule, we apply $\Pi_f$ to obtain an $R$-map $\Pi_f g \co Y \rightarrow \Gamma$.
For the introduction rule, we define the operation $\lambda$; consider a section~$t$ of~$g$, this is an arrow $t \co 1_{X} \rightarrow g$ in $\catC/X$ and since $f^*(1_{\Gamma}) \cong 1_{X}$ this is the same thing as an arrow $t \co f^*(1_{\Gamma}) \rightarrow g$. Taking the transpose yields a map~$\lambda(t) \co 1_{\Gamma} \rightarrow \Pi_f g$,  as required. 
For the elimination rule we need to provide an arrow $app_{f,g} \co f^*(\Pi_f g) \rightarrow g$. We can take $app_{f,g}$ to be the counit of the adjunction, 
$app_{f,g}\co= \epsilon_{g} \co f^*(\Pi_f g) \rightarrow g$. The computation rule follows easily from the bijection~$\lambda \co \catC /X [1_{X}, g] \to \catC /\Gamma[1_{\Gamma}, \Pi_f g]$.

It only remains to show the assignment $(\Gamma, f, g) \mapsto (\Pi_f g, \lambda, app)$ is pseudo-stable.
This is a diagram-chasing argument, which relies on the fact that, for a Cartesian square of the form
\[
\xymatrix{
X' \drpullback \ar[r]^{\tau} \ar[d]_{f'} & X \ar[d]^f \\
\Delta \ar[r]_{\sigma} & \Gamma }
\]
the Beck-Chevalley isomorphism $BC \co \Delta_\sigma \Pi_f \rightarrow \Pi_{f'} \Delta_\tau$
(where we write $\Delta_\sigma$ and $\Delta_\tau$ for the pullback functors along $\sigma$ and 
$\tau$, respectively) lifts to an isomorphism of $R$-maps by  \cite[Proposition~6.7]{Gambino_2017}. We leave the details to the readers.
\end{proof}

\begin{remark}
As shown  above, an awfs equipped with a functorial Frobenius structure implies the existence of lifts $\Sigma_f\co \Rmaps/X \rightarrow \Rmaps/\Gamma$ and $\Pi_f\co \Rmaps/X \rightarrow \Rmaps/\Gamma$ of the composition and pushforward functor, respectively, for each $R$-map~$(f,s)$. However, the underlying adjunctions need not lift to $\Rmaps$. Fortunately, this is not necessary for the construction of pseudo-stable choices of $\Sigma$- and $\Pi$-types since we only need the universal property at the level of the underlying category.
\end{remark}

The case for intensional identity types is more complicated. Here the extra algebraic structure is essential, it will allow us to keep track of the necessary information needed to coherently produce the `elimination terms' (i.e. the fillers $j$ of \cref{item:j} from \cref{defn:choiceID}) for the choice of $\mathsf{Id}$-types.
To address this issue, recall that a \emph{functorial factorisation of the diagonal} is a functor $\lcat{P} \co \catC^{\rightarrow} \rightarrow \catC^{\rightarrow} \times_\catC \catC^\rightarrow$ that acts on a map $f \co X \rightarrow Y$ as 
\[
f \mapsto (X \xrightarrow{r_f} PX \xrightarrow{\rho_f} X \times_{Y} X) \rlap{,}
\]  
such that the composition $\rho_f \cdot r_f$ equals the diagonal morphism  $\delta_f \co X \rightarrow X \times_Y X$. We say that a functorial factorisation of the diagonal is \emph{stable} if the square $\rho_{(h, k)} \co \rho_{f'} \rightarrow \rho_f $
is  Cartesian when $(h, k) \co f' \rightarrow f$ is so. 
We denote a (stable) functorial factorisation of the diagonal by  $\lcat{P} = \abra{r, \rho}$, where~$r, \rho \co  \catC^{\rightarrow} \rightarrow  \catC^{\rightarrow}$ are the induced functors from the two legs of the factorisation respectively. The following notion was first described in \cite[Definition~3.3.3]{Berg:2010aa}.

\begin{definition} \label{defn:SFPO}
Let $(L, R)$ be an awfs on $\catC$. A {\em stable functorial choice of path objects} (or {\em sfpo} for conciseness) consists of a lift of a stable functorial factorisation of the diagonal $\lcat{P}$ as shown in the following diagram:
\[
\xymatrix{
\Rmaps \ar[rr]^-{\lcat{P}} \ar[d] && \Lmaps \times_{\catC} \Rmaps \ar[d] \\
\catC^{\rightarrow} \ar[rr]_-{\lcat{P}} && \catC^{\rightarrow} \times_\catC \catC^\rightarrow \rlap{.}
}
\]
\end{definition}

\begin{proposition} \label{prop:SFPO->PSID}
Let $(L, R)$ be an awfs equipped with a sfpo of the form $\lcat{P} = \abra{r, \rho}$. Then $(L, R)$ is equipped with the structure of a pseudo-stable choice of $\mathsf{Id}$-types.
\end{proposition}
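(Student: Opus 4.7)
By \cref{prop:psequiv}, it suffices to produce a pseudo-stable choice of \emph{variable-based} $\mathsf{Id}$-types on the comprehension category of \cref{prop:AWFSCC}. Given a type $A$ over $\Gamma$---that is, an $R$-map $(p,s_p)\co X \to \Gamma$---the context extension $\Gamma.A.A$ is the pullback $X \times_\Gamma X$ and the diagonal $\delta_A$ is $\delta_p \co X \to X \times_\Gamma X$. Applying the sfpo to $(p,s_p)$ yields a factorisation $X \xrightarrow{r_p} PX \xrightarrow{\rho_p} X \times_\Gamma X$ with $r_p \in \Lmaps$ and $\rho_p \in \Rmaps$. The plan is to set $\mathsf{Id}_A \co= \rho_p$ and the reflexivity term $r_A \co= r_p$; the equation $\rho_p \cdot r_p = \delta_p$ is the factorisation condition, so $r_A$ is automatically a section of $\mathsf{Id}_A$ over $\delta_A$ as required by \cref{defn:choiceID}.

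For the eliminator $j_A$, let $C = (c,u)$ be an $R$-map over $PX$ with domain $E$, and $t$ a section of $c$ with $c \cdot t = r_p$. Since $r_p$ is an $L$-map and $(c, u)$ is an $R$-map, the square $(t, 1_{PX}) \co r_p \to c$ admits a canonical diagonal filler $j_A(C,t) \co PX \to E$ given by the formula $u \cdot Q(t,1_{PX}) \cdot \lambda$ recalled in \cref{sect:2-bis}, where $\lambda$ denotes the $L$-map structure on $r_p$. This filler satisfies $c \cdot j_A(C,t) = 1_{PX}$ and $j_A(C,t)\cdot r_p = t$, completing the variable-based $\mathsf{Id}$-type structure.

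To verify pseudo-stability, let $(\phi,\sigma) \co (q,s_q) \to (p,s_p)$ be a Cartesian morphism of $R$-maps with $\phi\co Y \to X$ lying over $\sigma\co \Delta \to \Gamma$. Because $\lcat{P}$ is \emph{stable}, it sends $(\phi,\sigma)$ to a Cartesian morphism $\mathsf{Id}_{(\phi,\sigma)} \co \rho_q \to \rho_p$ of $R$-maps, and its lift to $\Lmaps \times_\catC \Rmaps$ in the sfpo simultaneously makes $r_q$ compatible with $r_p$ along $\phi$. This yields condition~(2) of \cref{defn:psid}, while functoriality of $\mathsf{Id}_{(-)}$ is inherited from that of $\lcat{P}$.

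The hard part will be condition~(3). Given a Cartesian morphism $h\co C' \to C$ of $R$-maps over $\mathsf{Id}_{(\phi,\sigma)}$ and the pullback section $t'$ of $h^* c$, one must prove that $h \cdot j_B(C',t') = j_A(C,t) \cdot \mathsf{Id}_{(\phi,\sigma)}$. This reduces to the naturality of the canonical filler $u \cdot Q(-,-) \cdot \lambda$ in both arguments: in the $L$-map slot via the morphism of $L$-maps $(\phi, \mathsf{Id}_{(\phi,\sigma)}) \co r_q \to r_p$ provided by the sfpo, and in the $R$-map slot via $h$. Unwinding the filler formula using the functoriality of $Q$ together with compatibility of $\lambda$ and of the $R$-map multiplications with these morphisms yields the required equality. \cref{prop:psequiv} then transports the resulting pseudo-stable variable-based structure to the formulation of \cref{defn:pseudostable}.
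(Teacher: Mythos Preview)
Your proposal is correct and follows essentially the same approach as the paper: reduce to variable-based $\mathsf{Id}$-types via \cref{prop:psequiv}, take $\mathsf{Id}$ and $r$ from the sfpo, define $j$ by the canonical $L$-map/$R$-map filler, and derive condition~(3) from naturality of the filler in each slot. The paper merely makes the last step more explicit, splitting the required square into two naturality triangles (one using that $h$ is an $R$-map morphism, one using that $(\phi,\mathsf{Id}_{(\phi,\sigma)})$ is an $L$-map morphism) and observing that they share the same outer lifting problem, hence the same canonical filler.
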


\begin{proof} We first construct a pseudo-stable choice $(\mathsf{Id}_{v.b.}, r, j)$ of variable-based $\mathsf{Id}$-types (see \cref{defn:choiceID}) and then apply \cref{prop:psequiv} in order to obtain a pseudo-stable choice of $\mathsf{Id}$-types (see \cref{defn:pseudostable}). 

The choices for $\mathsf{Id}$ and $r$ are canonically given by the stable functorial choice of path objects. These satisfy the coherence properties of \cref{defn:psid}. Since the maps $r_f$ are equipped with an $L$-map structure, we have lifts against $R$-maps. Using this, we obtain a choice of canonical elimination terms (i.e. $j$-terms). 

We are left to verify that this choice is coherent. For this, it is sufficient to show that given a Cartesian morphism of $R$-maps $(h , k) \co f' \rightarrow f$, a $R$-map $q \co C \rightarrow PX$, if the diagram on the left of~\eqref{equ:fol-fig} commutes, then so does the one on the right.
\begin{equation}
\label{equ:fol-fig}
\begin{gathered}
\xymatrix{
X \ar[d]_{r_f} \ar[r]^{d} & C \ar[d]^{q} && C^* \ar[r]^{P(h ,k)^*}  & C\\
PX \ar@{=}[r]  & PX && PX' \ar[r]_{P(h, k)} \ar[u]^{j(q^*)} & PX \ar[u]_{j(d)} \rlap{,}
} 
\end{gathered}
\end{equation}
where $q^* \co C^* \rightarrow PX'$ is defined as the pullback of $q$ along $P(h,k)$. The arrows denoted by $j$ are the canonical choices of lifts. The arrow $d^*$ is the pullback of $d$ along $P(h, k)$, i.e. it is defined to be the unique arrow $d^* \co X' \rightarrow C^*$ such that:

\begin{align} \label{equation:d^*}
 q^* \circ d^* = r_{f'} \quad \text{and} \quad P(h, k)^* \circ d^* = d \circ h. 
\end{align}
We split the problem into two. First, consider the following diagram equipped with the corresponding canonical lifts:
\[
\xymatrix{
X' \ar[r]^{d^*} \ar[d]_{r_{f'}} & C^* \ar[d]^(0.6){q^*} \ar[r]^{P(h,k)^*} & C \ar[d]^{q} \\
PX' \ar@{=}[r] \ar@{.>}[ur]|{j(d^*)} \ar@{.>}[urr]|(.70)j & PX' \ar[r]_{P(h, k)} & PX \rlap{,}
}
\]
Note that $j = P(h, k)^* \circ j(d^*)$ since the Cartesian square $q^* \rightarrow q$ is a morphism of~$R$-maps. Now consider the following lifting problem
\[
\xymatrix{
X' \ar[r]^{h} \ar[d]_{r_{f'}} & X \ar[d]_(0.3){r_{f}} \ar[r]^{d} & C \ar[d]^{q} \\
PX' \ar[r]_{P(h, k)} \ar@{.>}[urr]|(.30){j'} & PX \ar@{.>}[ur]|{j(d)} \ar@{=}[r] & PX \rlap{.}
}
\]
Once more, $j' = j(d) \circ P(h, k)$ since the square $r_{f'} \rightarrow r_f$ is morphism of $L$-maps. 
Finally,~\eqref{equation:d^*} tells us that the outer squares of the two previous diagrams are equal, implying that they have the same lift $j = j'$. Thus, $P(h , k)^* \circ j(d^*) = j(d) \circ P(h, k)$ as needed.
\end{proof} 

Type-theoretic awfs's, defined below, collect the structure that we discussed so far. 

\begin{definition} \label{defn:algmod} Let $\catC$ be a category. 
A {\em type-theoretic awfs} on $\catC$ consists of the following data:
\begin{enumerate}
\item  an awfs $(L, R)$ on $\catC$  satisfying the exponentiability property,
\item a functorial Frobenius structure on $(L, R)$,
\item a stable functorial choice of path objects on $(L, R)$, 
\end{enumerate}
\end{definition}

The following theorem summarises our results obtained so far in this section.

\begin{theorem} \label{thm:AMtoCC}
 Let $(L, R)$ be an awfs on a category $\catC$ with the structure of a type-theoretic awfs.  Then the comprehension category induced by $(L, R)$ is equipped with pseudo-stable choices of $\Sigma$-, $\Pi$- and $\mathsf{Id}$-types. 
\end{theorem}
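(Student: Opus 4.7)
The plan is to observe that this theorem is essentially a bookkeeping result: the three clauses in \cref{defn:algmod} are precisely the hypotheses needed to invoke the three preceding propositions, each of which produces one of the required pseudo-stable choices of logical structure on the comprehension category $(\catC, U, \text{cod})$ obtained from $(L,R)$ via \cref{prop:AWFSCC}.

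First I would note that the comprehension category itself is supplied by \cref{prop:AWFSCC}, so no further work is needed to set up the ambient structure. Then, to obtain a pseudo-stable choice of $\Sigma$-types, I would simply appeal to \cref{prop:sigmaAWFS}, which requires nothing beyond the awfs structure. For the pseudo-stable choice of $\Pi$-types, I would invoke \cref{prop:piAWFS}, whose hypotheses are exactly items (1) and (2) of \cref{defn:algmod}, namely the exponentiability property and a functorial Frobenius structure. For the pseudo-stable choice of $\mathsf{Id}$-types, I would invoke \cref{prop:SFPO->PSID}, whose hypothesis is precisely item (3), the stable functorial choice of path objects. Combining these three yields all the required structure on the single comprehension category $(\catC, U, \text{cod})$.

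Since all three propositions produce their respective choices on the \emph{same} induced comprehension category, there is nothing to reconcile between them: the $\Sigma$-, $\Pi$- and $\mathsf{Id}$-structures coexist without any required compatibility beyond what each proposition already establishes. There is no genuine obstacle here; the only thing to be careful about is checking that each proposition is applied to the same underlying comprehension category, which is immediate from the definitions. In the writeup, I would keep the proof to a single short paragraph that simply names the three propositions and the matching clauses of \cref{defn:algmod}.
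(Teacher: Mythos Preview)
Your proposal is correct and matches the paper's own proof exactly: the paper's proof is the single line ``Apply \cref{prop:sigmaAWFS}, \cref{prop:piAWFS} and \cref{prop:SFPO->PSID}.'' Your only addition is the (accurate) explanation of which clauses of \cref{defn:algmod} supply the hypotheses of each proposition, which is fine to include or omit.
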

\begin{proof}
Apply \cref{prop:sigmaAWFS}, \cref{prop:piAWFS} and \cref{prop:SFPO->PSID}.
\end{proof}


We conclude the section describing how type-theoretic awfs's give rise to models of  type theory.

\begin{theorem} \label{thm:combine}
Let $(L, R)$ be an awfs on a category $\catC$ with the structure of a type-theoretic awfs. Then the right adjoint splitting of the comprehension category associated to $(L, R)$ is equipped with strictly stable choices of $\Sigma$-, $\Pi$- and $\mathsf{Id}$-types. 
\end{theorem}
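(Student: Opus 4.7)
The plan is to chain together the two main results already established in the excerpt, since \cref{thm:combine} is essentially a one-line corollary of \cref{thm:AMtoCC} and \cref{thm:coht}. Concretely, I would proceed as follows.

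First, I would invoke \cref{thm:AMtoCC} on the given type-theoretic awfs $(L,R)$. This hands me a comprehension category $(\catC,\rho,\chi)$, where $\rho\co \Rmaps \to \catC$ sends $(f,s)$ to $\mathrm{cod}(f)$ (via \cref{prop:AWFSCC}), together with pseudo-stable choices of $\Sigma$-, $\Pi$- and $\mathsf{Id}$-types. Each of these three pieces is already handled separately by \cref{prop:sigmaAWFS}, \cref{prop:piAWFS}, and \cref{prop:SFPO->PSID}, which in turn rely on vertical composition of $R$-algebras, the functorial Frobenius structure (for the pushforward lift), and the stable functorial choice of path objects, respectively, all of which are built into the notion of type-theoretic awfs in \cref{defn:algmod}.

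Second, I would apply the Coherence Theorem, \cref{thm:coht}, to this comprehension category. By that theorem, passing to the right adjoint splitting $(\catC,\rho^R,\chi^R)$ — whose objects are pairs $(A,A[-])$ of an $R$-map together with a local normal cleavage, and whose underlying split fibration was given by \cref{lem:splitfibrep} — converts each pseudo-stable choice into a strictly stable one. Moreover, the counit $\epsilon_{\rho}\co(\catC,\rho^R,\chi^R)\to(\catC,\rho,\chi)$ preserves the logical structure on the nose, which is a pleasant (though not strictly required) bonus.

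There is no real obstacle here: all of the work lies in verifying the hypotheses of the Coherence Theorem, and these have already been established by \cref{thm:AMtoCC}. Accordingly the proof reduces to a two-sentence appeal to \cref{thm:AMtoCC} followed by \cref{thm:coht}.
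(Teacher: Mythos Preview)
Your proposal is correct and matches the paper's own proof, which is literally the one-liner ``Combine \cref{thm:coht} and \cref{thm:AMtoCC}.'' Your elaboration of which ingredients feed into \cref{thm:AMtoCC} is accurate but already subsumed by that theorem's statement and proof.
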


\begin{proof} Combine \cref{thm:coht} and  \cref{thm:AMtoCC}.
\end{proof} 

\section{Revisiting the groupoid model} \label{sect:4}

The aim of this section is to provide a first example of a type-theoretic awfs by revisiting the original Hofmann-Streicher model \cite{Hofmann_1998} on the category of groupoids. Explicitly, we construct a type-theoretic awfs $(C_f, F)$ on the category $\GRP$ of groupoids and functors. 

Consider $f \co X \rightarrow Y$ a functor between groupoids. The comma category of $f$, denoted by $\commacat{f}$, has as objects tuples $(a, b, p)$ with $a \in X$, $b \in Y$ and $p \co b \rightarrow fa$. We have that $\commacat{f}$ is again a groupoid, and moreover the construction is functorial: $\commacat{(-)} \co \GRP^{\rightarrow} \rightarrow \GRP$. This forms the middle part of a functorial factorisation assigning  
\[
\xymatrix{
X \ar[r]^-{C_t f} & \commacat{f} \ar[r]^-{F f} & Y
}
\]
to $f \co X \rightarrow Y$, where $C_t f (a) = (a, fa, 1_{fa})$ and $Ff(a, b, p) = b$. 

\begin{proposition} \label{prop:grpawfs}
The functorial factorisation $(\commacat{(-)}, C_t, F)$ is an algebraic weak factorisation system on $\GRP$. The $C_t$-maps are the strong deformation retractions, while the $F$-maps are the normal isofibrations. 
\end{proposition}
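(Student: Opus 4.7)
The plan is to verify three distinct claims: first, that the functorial factorisation extends to a genuine awfs; second, that the $F$-algebras coincide with normal isofibrations; and third, that the $C_t$-coalgebras coincide with strong deformation retracts. In each case the argument is an explicit calculation with the comma construction, using crucially that every morphism in a groupoid is invertible.

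For the awfs structure, I would define the comultiplication $\delta \co C_t \rightarrow C_t^2$ and the multiplication $\mu \co F^2 \rightarrow F$ by direct formulas on the comma objects. At an arrow $f \co X \to Y$, set
\[
\mu_f \bigl((a,b,p),\, y,\, q \co y \to b\bigr) \co= (a,\, y,\, p \circ q)
\]
and
\[
\delta_f(a, b, p) \co= \bigl(a,\, (a, b, p),\, (1_a, p)\bigr),
\]
where $(1_a, p) \co (a,b,p) \to (a, fa, 1_{fa}) = C_t f(a)$ is the canonical morphism in $\commacat{f}$. The monad and comonad axioms then reduce to unitality and associativity of composition in $X$ and $Y$, and the distributive law condition of \cref{defn:awfs} follows from a diagram chase on tuples.

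To identify the $F$-algebras, I would unpack the definition: an $F$-algebra structure on $f \co X \to Y$ is a functor $s \co \commacat{f} \to X$ over $Y$ satisfying $s \circ C_t f = 1_X$ and compatible with $\mu$. Applying $s$ to the canonical morphism $(a,b,p) \to (a, fa, 1_{fa})$ in $\commacat{f}$ yields, for each $a \in X$ and isomorphism $p \co b \to fa$ in $Y$, a distinguished isomorphism in $X$ with codomain $a$ whose image under $f$ is $p$. The equation $s \circ C_t f = 1_X$ forces identities to lift to identities (normality), and $\mu$-compatibility corresponds to the cocycle condition relating lifts of composites. Conversely, a normal cloven isofibration gives rise to such an $s$ by choosing lifts, and the two constructions are mutually inverse.

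Dually, a $C_t$-coalgebra structure on $f \co X \to Y$ is a functor $k \co Y \to \commacat{f}$ with $F f \circ k = 1_Y$, $k \circ f = C_t f$, and compatible with $\delta$. Writing $k(b) = (r(b),\, b,\, p_b)$ extracts a functor $r \co Y \to X$ together with a natural isomorphism $p \co 1_Y \Rightarrow f \circ r$ satisfying $r \circ f = 1_X$ and $p_{fx} = 1_{fx}$ for every $x \in X$; this is precisely the data of a strong deformation retraction. The main obstacle I anticipate is the verification of the distributive law and of the coherence axioms for algebras and coalgebras: although every relevant square commutes automatically by groupoid invertibility, tracking the nested tuples through the iterated comma constructions $\commacat{F f}$ and $\commacat{C_t f}$ is delicate and requires some bookkeeping.
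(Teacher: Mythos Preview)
Your formulas for $\mu_f$ and $\delta_f$ are exactly those used in the paper, and the overall strategy---explicit computation with the comma construction---is the same. However, there is a genuine mismatch between what you prove and what the statement asserts.

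The statement is about $F$-\emph{maps} and $C_t$-\emph{maps}, which in the paper's conventions means algebras for the \emph{pointed endofunctor} $(F,\eta)$ and coalgebras for the \emph{copointed endofunctor} $(C_t,\epsilon)$. You instead unpack $F$-\emph{algebras} and $C_t$-\emph{coalgebras}, i.e.\ algebras for the monad and coalgebras for the comonad, imposing the additional $\mu$- and $\delta$-compatibility conditions. These are strictly stronger notions: the $\mu$-compatibility you identify as ``the cocycle condition relating lifts of composites'' is precisely what upgrades a normal cloven isofibration to a \emph{split} isofibration. Indeed, the paper remarks just after this proposition that the $F$-algebras (monad algebras) are the split isofibrations, not the normal ones. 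So your claim that ``a normal cloven isofibration gives rise to such an $s$ \ldots and the two constructions are mutually inverse'' is false as written: a normal cloven isofibration yields an $F$-map structure, but not in general an $F$-algebra structure.

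The fix is simply to drop the $\mu$- and $\delta$-compatibility requirements from your unpacking. An $F$-map structure on $f$ is just a section $s\co \commacat{f}\to X$ with $Ff\circ s = Ff$ and $s\circ C_t f = 1_X$; the first condition says $s$ lies over $Y$ and the second gives normality. This is exactly a normal cloven isofibration, with no cocycle condition required. The dual correction applies on the $C_t$-side. Once you make this adjustment, your argument coincides with the paper's.
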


\begin{proof}
We start by examining the structures of the $C_t$-maps and the $F$-maps. We know that an $F$-map structure on a map $f \co X \rightarrow Y$ corresponds to a lift $s$ as shown on the diagram on the left below:
\[
\xymatrix{
X \ar[d]_{C_t f} \ar@{=}[r] & X \ar[d]^f  && A \ar[d]_{g} \ar[r]^{C_t g} & \commacat{g} \ar[d]^{Fg} \\
\commacat{f} \ar[r]_{F f} \ar@{.>}[ur]|s & Y && B \ar@{=}[r] \ar@{.>}[ur]|{\lambda} & B \rlap{.}
 }
\]
A closer analysis will show that $s$ equips $f \co X \rightarrow Y$ with the structure of a normal isofibration. An $L$-map structure on $g \co A \rightarrow B$, is given by a lift $\lambda$ as shown on the diagram on the right of the previous figure. The structure obtained from such a lift $\lambda$ can be decomposed as $\lambda(b) = (\lambda_1(b) , b , \lambda_2(b))$ where $\lambda_1 \co B \rightarrow A$ corresponds to a retraction of $g$ and $\lambda_2 \co 1_B \rightarrow g \circ \lambda_1$ corresponds to a natural transformation constant on the image of $f$. This information corresponds to the structure of a strong deformation retraction.

We construct the corresponding structures of a comonad and a monad for $C_t$ and $F$ respectively. We provide a brief description and leave the details to the reader. The comultiplication $\delta_f \co \commacat{f} \rightarrow \commacat{C_t f}$ for $C_t$  is defined by letting
\[
\delta_f \co (a, b , p) \mapsto (a, (a, b, p), (1_a, p) \co (a, b, p) \rightarrow (a, Fa, 1_{fa})) \rlap{.}
\]
Similarly, the endofunctor $F$ has a multiplication $\mu_f \co \commacat{Ff} \rightarrow \commacat{f}$ given by
\[
\mu_f \co ((a, b , p), \tilde{b}, \tilde{p} \co \tilde{b} \rightarrow b) \mapsto (a, \tilde{b}, p \circ \tilde{p}) \, .\qedhere
\]
\end{proof}


\begin{remark}
The identification of the $F$-maps with normal isofibrations implies that the category $\GRP$ satisfies the exponentiability condition (see \cref{defn:expon}) with respect to the awfs $(C_t, F)$ since isofibrations can be exponentiated \cite{Conduche_1972}, even if $\GRP$ is not locally Cartesian closed. The  $F$-algebras can be identified with \emph{split isofibrations}. An extension of the theory considered here to $F$-algebras has been considered in~\cite{Woerkom}.
\end{remark}


\begin{proposition} \label{prop:GrpFro}
The awfs $(C_t, F)$ is equipped with a functorial Frobenius structure.
\end{proposition}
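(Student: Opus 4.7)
The plan is to define, for each $F$-map $(f, s)$ with $f \co X \to Y$ and each $C_t$-map $(g, \lambda)$ with $g \co A \to Y$, an explicit $L$-map structure on the strict pullback projection $\pi_X \co X \times_Y A \to X$, and then verify that this assignment is functorial in both inputs. Writing $\lambda(y) = (\lambda_1(y), y, \lambda_2(y))$ as in the proof of \cref{prop:grpawfs}, so that $\lambda_1 \circ g = 1_A$ and $\lambda_2(g(a)) = 1_{g(a)}$ for every $a \in A$, the functor $s \co \commacat{f} \to X$ will supply all the lifting data needed to produce the filler $\mu \co X \to \commacat{\pi_X}$.

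On objects, I will define $\mu(x) \co= (p_x, x, \xi_x)$, where $a_x \co= \lambda_1(f(x))$, the object $x'$ and the morphism $\xi_x \co x \to x'$ are obtained by applying $s$ respectively to the tuple $(x, g(a_x), \lambda_2(f(x))^{-1}) \in \commacat{f}$ and to the morphism $(1_x, \lambda_2(f(x))) \co C_t f(x) \to (x, g(a_x), \lambda_2(f(x))^{-1})$ of $\commacat{f}$, and $p_x \co= (x', a_x) \in X \times_Y A$ (a valid pullback element since $f(x') = g(a_x)$). The action of $\mu$ on a morphism $\alpha \co x_1 \to x_2$ of $X$ uses $\beta \co= \lambda_1(f(\alpha)) \co a_{x_1} \to a_{x_2}$; naturality of $\lambda_2$ guarantees that $(\alpha, g(\beta))$ is a morphism in $\commacat{f}$ between the tuples used to define $x_1'$ and $x_2'$, and applying $s$ provides the first component of $\mu(\alpha)$. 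Functoriality of $\mu$ then follows from functoriality of $s$, $\lambda_1$, $f$, and $g$.

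The two triangle identities are then verified as follows. That $F \pi_X \circ \mu = 1_X$ is immediate from the shape of $\mu(x)$. For the upper triangle $\mu \circ \pi_X = C_t \pi_X$, take $(x, a) \in X \times_Y A$, so $f(x) = g(a)$; then $a_x = \lambda_1(g(a)) = a$ and $\lambda_2(f(x)) = \lambda_2(g(a)) = 1_{g(a)} = 1_{f(x)}$, so the tuple fed to $s$ is precisely $C_t f(x) = (x, f(x), 1_{f(x)})$, and normality $s \circ C_t f = 1_X$ yields $x' = x$ and $\xi_x = 1_x$, whence $\mu(x) = ((x, a), x, 1_x) = C_t \pi_X(x, a)$ as required. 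Finally, functoriality of the assignment $((f, s), (g, \lambda)) \mapsto (\pi_X, \mu)$ with respect to morphisms of $F$-maps and $C_t$-maps over a fixed object of $\GRP$ will reduce to naturality of each ingredient. I expect the main obstacle to lie in the strict commutativity of this upper triangle, where the interplay between normality of $s$ and the constancy condition $\lambda_2 \cdot g = 1_g$ is precisely what produces a strict filler, rather than one merely up to canonical isomorphism.
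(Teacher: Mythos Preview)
Your proposal is correct and follows essentially the same approach as the paper: both construct the $C_t$-map structure on the pulled-back map by using the normal isofibration structure of $(f,s)$ to lift the homotopy $\lambda_2(f(x))$ to a morphism $\xi_x \co x \to x'$ in $X$, and then pair $x'$ with $a_x = \lambda_1(f(x))$. The paper phrases this in the language of strong deformation retractions and normal isofibrations (invoking \cref{prop:grpawfs}), whereas you work directly with the section $s \co \commacat{f} \to X$ applied to objects and morphisms of the comma category; you are also more explicit about verifying both triangle identities, where the paper leaves these implicit.
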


\begin{proof}
We show that pulling back a $C_t$-map along an $F$-map is uniformly a $C_t$-map. Consider $(g, \lambda) \co A \rightarrow Y$ a $C_t$-map and $(f, s) \co X \rightarrow Y$ an $F$-map. Let $g' \co A \times_Y X \rightarrow X$ be the pullback of $g$ along $f$. We define a $C_t$-map structure $\lambda'$ on $g'$ which, by \cref{prop:grpawfs}, corresponds to a strong deformation retraction $(g', \lambda_1', \lambda_2')$. Using that~$f$ corresponds to a normal isofibration, we can find for each point $x \in X$, a point $x' \in X$ and a lift $\lambda_2'(x)$ of $\lambda_2(fx)$, as in
\[
\xymatrix{
x \ar[rr]^{\lambda_2'(x)} \ar@{.}[d] && x' \ar@{.}[d] \\
fx \ar[rr]_{\lambda_2(fx)} && g\lambda_1fx \rlap{.}
}
\]
We define $\lambda_1'(x) = (\lambda_1(fx), x')$, the homotopy $\lambda_2' \co 1 \rightarrow g' \circ \lambda_1'$ is defined using the top arrow in the previous diagram.
\end{proof}

We turn our attention to identity types. The category $\GRP$ has a stable and functorial factorisation of the diagonal given on a map $f \co X \rightarrow Y$ by:
\[
\xymatrix{
X \ar[r]^{r_f} & Pf \ar[r]^{\rho_f} & X \times_Y X \rlap{,}
}
\]
where the objects of $Pf$ are tuples $(a, a', p)$ such that $p \co a \rightarrow a'$ is a morphism in $X$ over the identity, i.e. $fa = fa'$ and  $fp = 1_{fa}$. The map $r_f$ is given by $a \mapsto (a, a, 1_a)$ and the map $\rho_f$ is given by $(a, b, p) \mapsto (a, b)$. 

\begin{proposition} \label{prop:GrpWar}
The awfs $(C_t, F)$ is equipped with a stable and functorial choice of path objects.
\end{proposition}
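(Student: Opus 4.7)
The plan is to lift the stable functorial factorisation of the diagonal $X \xrightarrow{r_f} Pf \xrightarrow{\rho_f} X \times_Y X$ described just before the statement by producing, for each $F$-map $(f, s) \co X \to Y$, a canonical $C_t$-map structure on $r_f$ and a canonical $F$-map structure on $\rho_f$, and then verifying functoriality in morphisms of $F$-maps together with stability of the $\rho$-component on Cartesian morphisms.

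By \cref{prop:grpawfs} a $C_t$-structure on $r_f$ amounts to a retraction $\lambda_1 \co Pf \to X$ with a homotopy $\lambda_2 \co 1_{Pf} \Rightarrow r_f \circ \lambda_1$ constant on the image of $r_f$. I would set $\lambda_1(a, a', p) := a$ and take the component of $\lambda_2$ at $(a, a', p)$ to be $(1_a, p^{-1}) \co (a, a', p) \to (a, a, 1_a)$, which is a morphism in $Pf$ because $X$ is a groupoid and $p^{-1} \cdot p = 1_a = 1_a \cdot 1_a$. Strongness of the deformation is immediate from $\lambda_2(a, a, 1_a) = (1_a, 1_a) = 1_{(a,a,1_a)}$.

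By \cref{prop:grpawfs} an $F$-structure on $\rho_f$ amounts to the data of a normal isofibration. Given $(a, a', p) \in Pf$ and a morphism $(q, q') \co (b, b') \to (a, a')$ in $X \times_Y X$ (so $fq = fq'$), I would define $\tilde{p} := q'^{-1} \cdot p \cdot q \co b \to b'$; the computation $f\tilde p = fq'^{-1} \cdot 1_{fa} \cdot fq = 1_{fb}$ (using $fq = fq'$ and $fp = 1_{fa}$) shows $(b, b', \tilde p) \in Pf$, and the relation $q' \cdot \tilde p = p \cdot q$ gives a lift $(q, q') \co (b, b', \tilde p) \to (a, a', p)$ in $Pf$. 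Normality is automatic since $\tilde p = p$ when $(q, q') = (1_a, 1_{a'})$.

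For functoriality, a morphism $(h, k) \co f' \to f$ of $F$-maps induces $P(h, k) \co Pf' \to Pf$ by $(a, a', p) \mapsto (ha, ha', hp)$, and because $h$ preserves composition and inverses it transports both the retraction/homotopy and the chosen isofibration lifts strictly, producing a morphism of $L$-maps $r_{f'} \to r_f$ and a morphism of $R$-maps $\rho_{f'} \to \rho_f$. For stability, if $(h, k)$ is Cartesian so that $X' \cong X \times_Y Y'$, I must check that $P(h, k)$ exhibits $Pf'$ as the pullback of $Pf$ along $X' \times_{Y'} X' \to X \times_Y X$. Unwinding the definitions, an object on either side is a tuple $(a, a', p, y')$ with $a, a' \in X$, $y' \in Y'$, $p \co a \to a'$ in $X$, and $ky' = fa = fa'$, $fp = 1_{fa}$, and the same comparison works on morphisms. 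The main obstacle is purely bookkeeping; no genuine difficulty arises since each piece of structure is given by an explicit formula (involving only compositions and inverses in $X$) that functors preserve on the nose.
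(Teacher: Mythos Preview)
Your proposal is correct and follows essentially the same approach as the paper: give explicit formulas for the strong deformation retraction on $r_f$ and the normal isofibration structure on $\rho_f$, then note that functoriality and stability are routine. The only difference is a harmless symmetric choice: the paper takes the \emph{target} projection $\lambda_1(a,a',p)=a'$ with homotopy $\lambda_2(a,a',p)=(p,1_{a'})$, whereas you take the source projection with $(1_a,p^{-1})$; either works equally well in a groupoid. You are also a bit more explicit than the paper about verifying that $P(h,k)$ intertwines the chosen structures and about the stability argument, which the paper leaves implicit.
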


\begin{proof} 
For an $F$-map $(f,s) \co X \rightarrow Y$, we need to uniformly provide a $C_t$-map structure to~$r_f$ and an $F$-map structure to~$\rho_f$. 
Let us define $\lambda_1 := t_f \co Pf \rightarrow X$ the canonical target map. We define the natural transformation $\lambda_2 \co 1_{Pf} \rightarrow r_f \circ t_f$ by  
\[
\lambda_2(a, a', p) := (p, 1_{a'}) \co (a, a', p) \rightarrow (a', a', 1_a') \rlap{.} 
\]
This corresponds to a strong deformation retraction structure on $r_f$.
An $F$-map structure on $\rho_f$ corresponds to a normal isofibration. Consider $(\alpha, \beta) \co (b, b') \rightarrow (a, a')$ in $X \times_Y X$ and an object $(a, a', p) \in Pf$ over $(a, a')$. We find the lift $(\alpha, \beta) \co (b, b', q) \rightarrow (a, a', p)$ by setting $q := \beta \circ p \circ \alpha^{-1} \co b \rightarrow b'$.
\end{proof}


\begin{theorem} \label{thm:Groupmain}
The awfs $(C_t, F)$ on the category $\GRP$ is equipped with the structure of a type-theoretic awfs. 
\end{theorem}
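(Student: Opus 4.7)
The plan is to verify each of the three clauses of \cref{defn:algmod} in turn, drawing directly on the propositions already established in this section, and then simply assemble them. Since each of the required structures has been isolated and proved separately, the theorem is essentially a bookkeeping combination, but I would take care to point explicitly at where each piece comes from so that the reader can see that no additional hypotheses are smuggled in.

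First, I would invoke \cref{prop:grpawfs} to obtain that $(C_t, F)$ is genuinely an awfs on $\GRP$, with the $F$-maps identified as normal isofibrations and the $C_t$-maps as strong deformation retractions. The exponentiability property of \cref{defn:expon} then follows from the remark recorded just after \cref{prop:grpawfs}: since the image of $\Rmaps \rightarrow \GRP^{\rightarrow}$ consists of normal isofibrations and isofibrations are exponentiable in $\GRP$ by the classical result of Conduch\'e~\cite{Conduche_1972}, for any composable pair of $F$-maps $g \co Z \rightarrow Y$ and $f \co Y \rightarrow X$ the exponential $\Pi_f g$ exists in $\GRP/X$. This discharges clause~(1) of \cref{defn:algmod}.

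Second, clause~(2), the functorial Frobenius structure, is exactly the content of \cref{prop:GrpFro}, where we saw that pulling back a $C_t$-map (equivalently, a strong deformation retraction) along an $F$-map (equivalently, a normal isofibration) yields, uniformly in the data, a new $C_t$-map, with the retraction and homotopy components transported by the chosen normal lifts of the isofibration. Third, clause~(3), the stable functorial choice of path objects, is supplied by \cref{prop:GrpWar}, using the factorisation $f = \rho_f \cdot r_f$ through $Pf$, with $r_f$ equipped with the evident strong deformation retraction structure and $\rho_f$ equipped with the normal isofibration structure obtained by conjugating the source path by the given morphism.

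The only issue to be careful about is that each of these structures was constructed functorially and in a way that matches the form required by \cref{defn:FFstructure} and \cref{defn:SFPO}, rather than merely pointwise; but inspection of the proofs of \cref{prop:GrpFro} and \cref{prop:GrpWar} confirms this, since the assignments are given by explicit formulas that are manifestly natural in morphisms of $F$-maps and in morphisms of composable pairs. I do not expect any real obstacle: the substantive work is already in the cited propositions, and the theorem is a one-line assembly, which I would phrase as: \emph{apply \cref{prop:grpawfs}, \cref{prop:GrpFro} and \cref{prop:GrpWar}, together with the exponentiability of isofibrations in $\GRP$}.
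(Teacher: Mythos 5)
Your proposal is correct and takes essentially the same approach as the paper: the paper's proof is the one-line assembly of \cref{prop:grpawfs}, \cref{prop:GrpFro} and \cref{prop:GrpWar}, with exponentiability implicitly handled by the remark following \cref{prop:grpawfs}, exactly as you describe.
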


\begin{proof}
Apply \cref{prop:grpawfs}, \cref{prop:GrpFro} and \cref{prop:GrpWar}.
\end{proof}
 
By \cref{thm:combine} and \cref{thm:Groupmain} we obtain a version of the groupoid model of~\cite{Hofmann_1998}, using normal isofibrations instead of split fibrations and 
presented in terms of a split comprehension category rather than of a category with families~\cite{Dybjer_1996}. In our presentation, the connection to the homotopy theory of groupoids is made explicit thanks to the notion of a type-theoretic awfs. 
\section{Type-theoretic awfs from uniform fibrations} \label{sect:UF}

In this section we  investigate how to obtain type-theoretic awfs using the theory of uniform fibrations of \cite{Gambino_2017}. This provides a major source of examples of categories equipped with type-theoretic awfs, including some on simplicial and cubical sets. 

We begin by recalling the pushout-product construction \cite{Riehl:2013aa}. 
 Let us consider a Grothendieck topos $\catE$, the \emph{pushout-product bifunctor}
$- \hat{\times} -  \co \catE^{\rightarrow} \times \catE^{\rightarrow} \rightarrow \catE^{\rightarrow}$
is defined on a pair of arrows $f \co X \rightarrow Y$ and $g \co A \rightarrow B$ as the  dotted arrow 
in 
\[
\xymatrix{
X \times A \ar[rr]^{f \times A} \ar[d]_{X \times g} && Y \times A \ar[d] \ar@/^10pt/[rdd]^{Y \times g} & \\
X \times B \ar[rr] \ar@/_10pt/[rrrd]_{f \times B} && \drpushout (Y \times A) +_{X \times A} (X \times B) \ar@{.>}[dr]|{f \hat{\times} g} & \\
&&& Y \times B \, .
}
\]

An \emph{interval object} in $\catE$ consists a object $I$ together with two morphisms
$
\delta^0, \delta^1 \co \bot \rightarrow I
$
(where we write $\bot$ for the terminal object of $\catE$), respectively called the left and right \emph{endpoint inclusions};  these morphisms are required to be disjoint, \emph{i.e.}~the pullback of one along the other matches the initial object. We require the following additional structure. The \emph{connection operations} on $I$ are given by
$
c^k \co I \times I \rightarrow I
$
for $k \in \{0,1\}$, making the following diagrams commute:
\[
\xymatrix{
I \ar[r]^{\delta^k \times I} \ar[d]_{\epsilon} & I \times I \ar[d]^{c^k}   &     
I \ar[rr]^{\delta^{1 - k} \times I} \ar@{=}[drr] && I \times I \ar[d]^{c^k}   
\\
\bot \ar[r]_{\delta^k}  & I  \rlap{,}  &     
 && I \rlap{.}
}
\]
Connections correspond to special type of degeneracy maps that can be pictured as the two possible deformations of the square $I \times I$ into its diagonal by fixing one of the two endpoints.
With this in place, we proceed to describe the construction of uniform fibrations. Our starting point is the following definition. 

\begin{definition} \label{defn:suittops}
A {\em suitable topos} consists of a tuple $(\catE, I, \Em)$ where $\catE$ is a Grothendieck topos equipped with an interval object $I$ with connections and a class $\Em$ of arrows in $\catE$ satisfying the following conditions:
\begin{itemize}
\item[(M1)] the objects of $\Em$ are monomorphisms,
\item[(M2)] the inital map $\emptyset \rightarrow X$ is in $\Em$ for every $X \in \catE$,
\item[(M3)] the objects of $\Em$ are closed under pullback along any arrow in $\catE$,
\item[(M4)] the elements of $\Em$ are closed under pushout-product with the endpoint inclusions, \emph{i.e.}~for each $j \in \Em$, we have that $\delta^k \hat{\times} j \in \Em$.
\end{itemize}
The elements of $\Em$ are called \emph{generating monomorphisms}.
\end{definition}


Given a suitable topos $(\catE, I, \Em)$, we can consider $\Em$ as a category by taking Cartesian squares as arrows. Now, let us denote by $\Em_{\htimes}$ the category that has as objects maps of the form $\delta^k \htimes j$ with $j\in \Em$ and $k \in \{0,1\}$ and whose morphisms are given by squares of the form
$
\delta^k \htimes \sigma \co (\delta^k \htimes j') \rightarrow (\delta^k \htimes j)
$
induced by functoriality of the pushout-product applied to Cartesian squares $\sigma \co j' \rightarrow j$ between generating monomorphisms. We consider $\Em_{\htimes}$ to be a category of arrows by taking the inclusion into $\catE^{\rightarrow}$.

\begin{construction}
Let us consider a suitable topos $(\catE, I, \Em)$. The category of arrows of {\em trivial uniform fibrations}, denoted by 
\[
\category{TrivUniFib} \rightarrow \catE^{\rightarrow} \rlap{,} 
\]
is defined as the right orthogonal category of arrow to $\Em$, that is, $\category{TrivUniFib}\co= \Em^{\boxslash}$. Analogously, the category of arrows of {\em uniform fibrations}, denoted by
\[
\category{UniFib} \rightarrow \catE^{\rightarrow} \rlap{,}
\]
is defined as the right orthogonal category of arrow to $\Em_{\htimes}$, i.e. $\category{UniFib}\co= \Em_{\htimes}^{\boxslash}$.
\end{construction}

We construct awfs's of \emph{trivial uniform fibrations} and \emph{uniform fibrations}, even if we cannot apply Garner's small object argument directly because $\Em$ need  not be small.

\begin{lemma} \label{lemma:awfssmall}
Consider a suitable topos $(\catE, I, \Em)$ with a fixed dense small subcategory~$\catA$. Let us denote by $\Ai$ the full subcategory of $\Em$ spanned by those arrows in $\Em$ whose codomain lie in $\catA$. Similarly, denote by $\Ai_{\htimes}$ the full subcategory of $\Em_{\htimes}$ whose objects are pushout-product maps $\delta^k \htimes j$ with $j \in \Ai$. Then the following are satisfied:
\begin{enumerate}
\item  The right orthogonal functor of the inclusion $\mathsf{inc} \co \Ai \hookrightarrow \Em$ is an isomorphism \emph{i.e.}~$\mathsf{inc}^{\boxslash} \co \Em^{\boxslash} \cong \Ai^{\boxslash}$.
\item The right orthogonal functor of the inclusion $\mathsf{inc}_{\htimes} \co \Ai_{\htimes} \hookrightarrow \Em_{\htimes}$ is an isomorphism \emph{i.e.}~$\mathsf{inc}_{\htimes}^{\boxslash} \co \Em_{\htimes}^{\boxslash} \cong \Ai_{\htimes}^{\boxslash}$.
\end{enumerate}
\end{lemma}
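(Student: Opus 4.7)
The plan is to leverage the density of $\catA$: it should allow any right lifting structure against the smaller family $\Ai$ to be extended canonically and uniquely to one against $\Em$, and similarly in the pushout-product version. The key point is that both $\Em$ and $\Em_{\htimes}$ are stable under the pullback operations that density expresses, so fillers assigned on the ``small piece'' $\Ai$ can be glued along the canonical colimit presentations.

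For part~(1), I would argue as follows. Given $f \co X \to Y$ with a right lifting structure $\theta$ against $\Ai$, I construct its extension $\tilde\theta$ against $\Em$ by the following recipe. For $j \co Z \to B$ in $\Em$ and a square $(\ell, m) \co j \to f$, density gives the canonical colimit $B \cong \colim_{\alpha \co C \to B,\, C \in \catA} C$. For each such $\alpha$, axiom (M3) gives $\alpha^* j \in \Em$, and since its codomain lies in $\catA$ we have $\alpha^* j \in \Ai$. Pulling $(\ell, m)$ back along $\alpha$ and applying $\theta$ yields fillers $\tilde\theta_\alpha \co C \to X$, and the naturality of $\theta$ with respect to Cartesian morphisms in $\Ai$ makes these compatible over $\catA/B$, so by density they glue to a filler $\tilde\theta_{(j,\ell,m)} \co B \to X$. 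Uniqueness of the extension follows because any right lifting structure against $\Em$ restricts to $\Ai$ and must be recovered by the same gluing. To promote this bijection on objects to an isomorphism in $\CAT/\catE^{\rightarrow}$, note that a morphism in $\Em^{\boxslash}$ is a square compatible with every chosen filler; the density reduction shows this is equivalent to compatibility against fillers for $\Ai$, yielding a bijection on hom-sets.

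For part~(2), the argument is structurally the same, once I verify that the pushout-product $\delta^k \htimes (-)$ interacts well with the density colimit. Since products in a topos preserve colimits in each variable and pushouts commute with colimits in $\catE^{\rightarrow}$, the bifunctor $\htimes$ preserves colimits in its second argument. Thus if $j \in \Em$ is resolved by density as a colimit of arrows $j' \in \Ai$ (via (M3), the resolution stays inside $\Em$), then $\delta^k \htimes j$ is the corresponding colimit in $\catE^{\rightarrow}$ of the maps $\delta^k \htimes j' \in \Ai_{\htimes}$. The same density-and-gluing argument then produces the isomorphism $\Em_{\htimes}^{\boxslash} \cong \Ai_{\htimes}^{\boxslash}$.

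The step I expect to be the main obstacle is checking that the glued filler $\tilde\theta$ really defines a natural right lifting operation on \emph{all} of $\Em$ (resp.\ $\Em_{\htimes}$), i.e.\ that it is functorial with respect to arbitrary Cartesian morphisms in these larger categories, not only those in $\Ai$ (resp.\ $\Ai_{\htimes}$). This requires a second use of density: any Cartesian square between elements of $\Em$ can be refined, via the canonical diagrams from $\catA$, into a compatible system of Cartesian squares in $\Ai$, and the uniqueness characterisation of $\tilde\theta_{(j,\ell,m)}$ together with the naturality of $\theta$ against $\Ai$ then force the required naturality. For the pushout-product case, one additionally needs to track how this refinement interacts with the functor $\delta^k \htimes (-)$, which is where the preservation-of-colimits observation pays off.
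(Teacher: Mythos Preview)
Your proposal is correct and follows essentially the same approach as the paper: both use density of $\catA$ to present any $j \in \Em$ (resp.\ $\delta^k \htimes j \in \Em_{\htimes}$) as a canonical colimit of pullbacks lying in $\Ai$ (resp.\ $\Ai_{\htimes}$), invoking universality of colimits in a topos and cocontinuity of $\delta^k \htimes (-)$, and then glue the $\Ai$-fillers via the universal property. Your discussion of naturality against arbitrary Cartesian morphisms in $\Em$ and of the bijection on hom-sets is more explicit than the paper, which leaves these checks to the reader.
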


\begin{proof}
We start with $(1)$. In order to define an inverse $\Ai^{\boxslash} \to \Em^{\boxslash}$, consider an object $(f, \theta) \in \Ai^{\boxslash}$, where $\theta$ is a lifting operation for squares $(a, b) \co i \to f$ with $i \in \Ai$. We need a way to canonically extend $\theta$ to all arrows in $\Em$. In order to do this, let us consider $j\co A \to B$ in $\Em$. Since $\catA$ is dense, we can express $B$ canonically as a colimit $B \cong \mathsf{colim}_{k \co \ocat{A} \to B} \ocat{A}$ with  $\ocat{A} \in \catA$. Moreover, in a topos, pullbacks commute with colimits, and thus we obtain
\[
j \cong \mathsf{colim}_{k \co \ocat{A} \to B} ( \Delta_k j) \rlap{,}
\]
where $\Delta_k j$ is the pullback of $j \co A \to B$ along $k \co \ocat{A} \to B$. Since $\Em$ is closed under base change, $\Delta_k j \in \Em$ and by definition, we get $ \Delta_k j \in \Ai$. A filler for a square $(a, b) \co j \to f$ is canonically obtained from the universal property of the colimit, applied to the collections of fillers given by $\theta$ relative to $\Delta_k j$ for each $k \co \ocat{A} \to B$.

For $(2)$, we proceed in a similar manner. Consider $(f, \theta) \in \Ai_{\htimes}^{\boxslash}$, we need to canonically extend $\theta$ to all arrows in $\Em_{\htimes}$. For this, consider $\delta^k \hat{\times} j \in \Em_{\htimes}$. By definition, $j \in \Em$ and thus $j \cong \mathsf{colim}_{k \co \ocat{A} \to B} (k^*j)$ by the previous argument. Since $\delta^k \htimes -$ is cocontinuous, we obtain:
\[
\delta^k \hat{\times} j \cong \delta^k \hat{\times} \mathsf{colim}_{k \co \ocat{A} \to B} (\Delta_k j) \cong \mathsf{colim}_{k \co \ocat{A} \to B} (\delta^k \htimes ( \Delta_k j)) \rlap{,}
\]
and by definition $\delta^k \htimes (\Delta_k ) \in \Ai_{\htimes}$. Once more, any square $(a, b) \co \delta^k \hat{\times} j \to f$ can be filled canonically by the universal property of the colimit applied to the collections of fillers given by $\theta$ relative to $\delta^k \htimes (\Delta_k j)$.
\end{proof}

\begin{proposition} \label{remark:strange}
Consider a suitable topos $(\catE, I, \Em)$. There exists two awfs~$(C, F_t)$ and~$(C_t, F)$ on~$\catE$ which are algebraically-free on  $\Em$ and on $\Em_{\htimes}$ respectively. 
\end{proposition}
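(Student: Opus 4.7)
The plan is to reduce to Garner's algebraic small object argument by passing through a small dense subcategory, since $\Em$ need not itself be small. The key bridge is Lemma~\ref{lemma:awfssmall}, which transports orthogonality (and hence algebraic-freeness) from a small approximation back to the full class.

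First I would fix a dense small subcategory $\catA \hookrightarrow \catE$ (available because $\catE$ is a Grothendieck topos) and form the categories $\Ai$ and $\Ai_\htimes$ of Lemma~\ref{lemma:awfssmall}. A brief verification shows that both are essentially small: by (M1) the members of $\Em$ are monomorphisms, and in a Grothendieck topos every object has only a set of subobjects, so the arrows of $\Em$ with codomain in the small set of objects of $\catA$ form a set up to isomorphism; $\Ai_\htimes$ is then essentially small since its objects are parametrised by $\Ai \times \{0,1\}$.

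Next I would apply Garner's algebraic small object argument \cite{Garner_2009} to the essentially small categories $\Ai$ and $\Ai_\htimes$ to produce two awfs $(C, F_t)$ and $(C_t, F)$ on $\catE$, together with units $\Ai \to \Calg$ and $\Ai_\htimes \to \Ctalg$ making the composites
\[
\Ftalg \to (\Calg)^{\boxslash} \to \Ai^{\boxslash}, \qquad \Falg \to (\Ctalg)^{\boxslash} \to \Ai_\htimes^{\boxslash}
\]
isomorphisms over $\catE^{\rightarrow}$.

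Finally I would upgrade algebraic-freeness to the full classes using Lemma~\ref{lemma:awfssmall}: the restriction functors $\Em^{\boxslash} \to \Ai^{\boxslash}$ and $\Em_\htimes^{\boxslash} \to \Ai_\htimes^{\boxslash}$ are isomorphisms, so composing with them yields $\Ftalg \cong \Em^{\boxslash}$ and $\Falg \cong \Em_\htimes^{\boxslash}$. In particular, every $F_t$-algebra acquires a canonical right-lifting structure against $\Em$, which allows each $f \in \Em$ to be exhibited as a retract of $Cf$ in the arrow category (filling the square $(Cf, 1) \co f \to F_t f$) and thereby to inherit a canonical $C$-coalgebra structure from the free coalgebra on $Cf$; this defines the required unit $\Em \to \Calg$, and the same recipe supplies $\Em_\htimes \to \Ctalg$. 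The main technical hurdle is the essential-smallness step for $\Ai$; once that is in place, the rest follows formally from Garner's theorem together with Lemma~\ref{lemma:awfssmall}.
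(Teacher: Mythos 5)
Your proposal follows the same route as the paper: reduce to the small generating categories $\Ai$ and $\Ai_\htimes$, apply Garner's small object argument, and transfer algebraic-freeness back to $\Em$ and $\Em_\htimes$ via Lemma~\ref{lemma:awfssmall}. You supply two pieces of detail the paper's proof leaves implicit, and both are welcome: the verification that $\Ai$ is essentially small (via the set of subobjects of each object of the small dense subcategory $\catA$, using (M1)), and an explicit construction of the unit $\Em \to \Calg$ needed to witness algebraic-freeness.

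One caution about the last step. The retract recipe you sketch (filling the square $(Cf,1)\co f \to F_tf$ and treating $f$ as a retract of $Cf$) produces, on the nose, a $(C,\epsilon)$-\emph{copointed coalgebra structure} on $f$ — i.e.\ a $C$-map structure — but it does not by itself deliver the coassociativity condition needed for $f$ to be a genuine $C$-\emph{coalgebra}. Retracts in $\catC^{\rightarrow}$ of $C$-coalgebras along arbitrary squares are $C$-maps, not automatically $C$-coalgebras; the inclusion $\Calg \hookrightarrow \Cmaps$ is full and faithful but not essentially surjective in general. To land in $\Calg$ one should either appeal to the free/cofree adjunction structure that Garner's construction provides on $\Ai$ and extend it along the colimit decomposition of $j \in \Em$ (exactly the density argument used in the proof of Lemma~\ref{lemma:awfssmall}, recalling that the forgetful functor $\Calg \to \catC^{\rightarrow}$ creates colimits because it is comonadic), or argue more carefully that the particular retraction chosen is compatible with the comultiplication. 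The paper's own proof is terse on precisely this point, so your sketch matches its level of detail; just be aware that the retract step as phrased proves less than what the word ``coalgebra'' promises. You also slightly mislocate the ``main technical hurdle'': essential smallness of $\Ai$ is routine once (M1) and the Grothendieck topos hypothesis are in hand, whereas the unit construction is where the genuine work hides.
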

\begin{proof}
Apply Garner's small object argument to $\Ai$ and $\Ai_{\htimes}$ respectively. Since $\catE$ is a Grothendieck topos, there exists a small dense subcategory $\catA$ of $\catE$ (for example, the full subcategory of compact objects for a large enough cardinal). By \cref{lemma:awfssmall}, the resulting awfs's are algebraically-free on $\Em$ and $\Em_{\htimes}$ respectively.
\end{proof}

%

\begin{remark} \label{remark:strange1}
By definition of algebraically-free awfs we have the following isomorfisms $\Ftalg \cong \category{TrivUniFib}$ and $\Falg \cong \category{UniFib}$. And, by \cref{prop:baf} we have back-and-forth functors $\Ftmaps \leftrightarrow \category{TrivUniFib}$ and $\Fmaps \leftrightarrow \category{UniFib}$.
\end{remark}

We proceed to show that, under some extra hypothesis, the awfs $(C, F_t)$ of uniform fibrations  is type-theoretic. We know that it has a functorial Frobenius structure by \cite[Theorem~8.8]{Gambino_2017} and
so we only need to construct a stable functorial choice of path objects on $(C_t, F)$. For this, we require the following construction. Given a topos $\catE$ equipped with an interval object $I$, there is a natural way to construct a stable and functorial factorisation of the diagonal: for a morphism~$f \co B \rightarrow A$, consider 
\[
\xymatrix{
B \ar[r]^{r_f} & Pf \ar[r]^(0.45){\rho_f} & B \times_A B \rlap{,}
}
\]
where the object $Pf$ and the map $r_f$ arise from the pullback diagram:
\begin{equation} 
\begin{gathered}
\xymatrix{
B  \ar@/^1.2pc/[drr]^{B^{\epsilon}} \ar@/_1.2pc/[ddr]_{f} \ar@{..>}[dr]^{r_f} &  & \\
    &  P f \drpullback \ar[r] \ar[d]   &  B^I \ar[d]^{f^I}\\
    &    A \ar[r]_{A^{\epsilon}}             &    A^I \rlap{.}
}
\end{gathered}
\label{Fig:WPO}
\end{equation}
Here, we use the abbreviation of $(-)^I$ for the exponential object $\hom(I, -)$ and denote by~$\epsilon \co I \rightarrow \bot$ the unique map to the terminal object. The second leg of the factorisation~$\rho_f \co P f \rightarrow B \times_A B$ is given by the universal property of $B \times_A B$ applied to the canonical source and target maps $s_f, t_f \co Pf \rightarrow B$ given by the composition of the arrow~$Pf \rightarrow B^I$ from the pullback square, and $B^{\delta^0}, B^{\delta^1} \co B^I \rightarrow B$ respectively. We denote the factorisation by $\lcat{P}_I$, so as to indicate that it was constructed from the interval $I$.

Let us provide an alternative construction of this factorisation which makes evident some intermediate steps and uses the adjunction $- \hat{\times} i \vdash \hat{\hom}(i, -)$ given by the pushout-product and pullback-exponential. Denote by $i \co \partial I \rightarrow I$ the boundary inclusion of the interval object and by $\iota^k \co \bot \rightarrow \partial I$ the composition of the boundary with the $k$-th endpoint inclusion. The following diagram expands the previous one, \emph{i.e.}~the exterior part is exactly the one in~\eqref{Fig:WPO}.
\begin{gather}
\begin{aligned}
\xymatrix{
B \ar@/^10pt/[rrrd]^{B^{\epsilon}} \ar[dr]^{r_f} \ar[ddr]|{\Delta_f} \ar@/_10pt/[dddr]|{1_B} \ar@/_30pt/[ddddr]_{f} &&&  \\
   & Pf \drpullback \ar[rr] \ar[d]^{\rho_f} && B^I \ar[d]_{\hat{\hom}(i, f)} \ar@/^35pt/[dd]^{\hat{\hom}(
  \delta^1, f)} \ar@/^90pt/[ddd]^{f^I} \\
   & B \times_A B \drpullback \ar[d]^{\pi_2} \ar[rr]|{\abra{\alpha_f, \lambda_f}} && A^I \times_{A^{\partial I}} B^{\partial I} \ar[d]_{1 \times_{A^{\iota^1}} B^{\iota^1}} \\
   & B \drpullback  \ar[rr]|{\abra{\beta_f, 1_B}} \ar[d]^f           && A^I \times_A B \ar[d]_{\pi_1} \\
   & A     \ar[rr]_{A^{\epsilon}}         && A^I \rlap{.}
}
\end{aligned}
\label{Fig:bigfatfig}
\end{gather}
The intermediate horizontal arrows $\lambda_f$, $\alpha_f$ and $\beta_f$ are given intuitively as follows. The map $\lambda_f$ sends a pair of points in $B \times_A B$ to the same pair of points but now in $B^{\partial I}$, $\alpha_f$ sends a similar pair of points $(b_1, b_2)$ to the reflexivity (constant) path on $f(b_1) = f(b_2)$, and map $\beta_f$ also sends a point $b$ to the reflexivity path on $f(b)$.

The next essential ingredient needed to prove that the factorisation $\lcat{P}_I = \abra{r, \rho}$ lifts to a stable functorial choice of path objects, is that of the categories of \emph{strong homotopy equivalences} and of \emph{strong deformation retracts}. We recall from \cite[Definition~4.1]{Gambino_2017} the definition of $k$-oriented strong homotopy equivalence (for $k \in \{ 0, 1 \}$) and from \cite[Lemma~8.1]{Gambino_2017} that they assemble into a category of arrows which we will call $\category{SE}_k$, we denote $\category{SE} \co= \category{SE}_0 + \category{SE}_1$. The definition of k-oriented strong deformation retracts is analogous; briefly, a k-oriented strong deformation retraction structure corresponds to a tuple $(g\co A \rightarrow B, r\co B \rightarrow A, h\co I \times B \rightarrow B)$ such that $rg = 1_A$, $h$ is an homotopy from $gr$ to $1_B$ or from $1_B$ to $gr$ respectively if $k$ is $0$ or $1$ and such that $h$ is degenerate in the image of $g$ (hence the strength). Strong deformation retracts assemble into categories of arrows $\category{SDR}_k$ depending on the orientation $k \in \{ 0, 1 \}$, and so we obtain $\category{SDR} \co= \category{SDR}_0 + \category{SDR}_1$ by taking their coproduct. Notice that any strong deformation retract is also a strong homotopy equivalence, i.e. there is a functor of category of arrows $\category{SDR} \rightarrow \category{SE}$ over the identity of the underlying category.

The following two lemmas constitute the first key results regarding the connection between the factorisation $\lcat{P}_I = \abra{r, \rho}$ and the awfs of uniform fibrations. 

\begin{lemma} \label{claim:1}
Consider a suitable topos $(\catE, I, \Em)$ and let $(C_t, F)$ be a corresponding awfs of uniform fibrations on $\catE$. Suppose that the following additional hypothesis holds:
\begin{enumerate}
\item[(M5)] Maps in $\Em$ are closed under pushout-product against the boundary inclusion $i \co \partial I \rightarrow I$, i.e. for any $j \in \Em$, we have that $i \hat{\times} j \in \Em$.
\end{enumerate}
Then, the second component of the factorisation $\lcat{P}_I$, i.e $\rho \co \catE^{\rightarrow} \rightarrow \catE^{\rightarrow}$, lifts to a functor $\rho \co \Fmaps \rightarrow \Fmaps$.
\end{lemma}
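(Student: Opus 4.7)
The plan is to exploit the fact, visible from diagram~\eqref{Fig:bigfatfig}, that $\rho_f$ is a pullback of the pullback-exponential $\hat{\hom}(i, f) \co B^I \to A^I \times_{A^{\partial I}} B^{\partial I}$ along the canonical map $\abra{\alpha_f, \lambda_f} \co B \times_A B \to A^I \times_{A^{\partial I}} B^{\partial I}$. Since $\Fmaps$ is closed under pullback (\cref{thm:vert-comp-pbk-awfs}) and this pullback square is manifestly natural in $f$, it is enough to lift the assignment $f \mapsto \hat{\hom}(i, f)$ to a functor $\Fmaps \to \Fmaps$; the sought functor $\rho \co \Fmaps \to \Fmaps$ will then be obtained by pulling back the resulting $F$-map structures along the family $\abra{\alpha_f, \lambda_f}$.

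To equip $\hat{\hom}(i, f)$ with an $F$-map structure whenever $(f, s)$ is an $F$-map, I would invoke the adjunction $- \hat{\times} i \dashv \hat{\hom}(i, -)$ on arrow categories. Under this adjunction, a lifting problem $\delta^k \hat{\times} j \to \hat{\hom}(i, f)$ transposes to a lifting problem $(\delta^k \hat{\times} j) \hat{\times} i \to f$; using the symmetry and associativity of the pushout-product, the left-hand map rewrites as $\delta^k \hat{\times} (i \hat{\times} j)$. By hypothesis~(M5) we have $i \hat{\times} j \in \Em$, so this is a lifting problem against a member of $\Em_{\htimes}$. The chosen lifting structure $s$ on $f$ then supplies a canonical filler, and transposing back along the adjunction yields a canonical filler for the original square.

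What remains is to verify naturality: that the fillers so obtained are compatible with morphisms in $\Em_{\htimes}$ of the form $\delta^k \hat{\times} \sigma$ (so that $\hat{\hom}(i, f)$ is genuinely an object of $\Fmaps$), and that the assignment $(f, s) \mapsto \hat{\hom}(i, f)$ is functorial in morphisms of $\Fmaps$. Both reduce to the naturality of the Leibniz adjunction combined with the functoriality of $\hat{\times}$ and of pullbacks in $\catE$. I expect this coherence bookkeeping to be the main technical obstacle; conceptually, the argument is simply the standard interplay between the pushout-product $-\hat{\times} i$ and its adjoint $\hat{\hom}(i, -)$ applied to this particular setting, and is the reason hypothesis (M5) has been introduced.
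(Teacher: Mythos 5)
Your plan is correct and follows essentially the same route as the paper: first reduce to lifting $\hat{\hom}(i,-)$ by noting that $\rho_f$ is its pullback along $\abra{\alpha_f, \lambda_f}$, then transpose along the Leibniz adjunction $-\hat{\times} i \dashv \hat{\hom}(i,-)$ and invoke (M5) so that $\delta^k \hat{\times} (i \hat{\times} j)$ lands in $\Em_{\htimes}$, where the uniform fibration structure on $f$ applies. The difference is one of packaging: the paper builds the argument on the cofibration side, lifting $i \hat{\times} -$ and $\delta^k \hat{\times} -$ through $\Cmaps$ and strong homotopy equivalences to land in $\Ctmaps$, and then transposes formally via the orthogonality adjunction of~\cite{Gambino_2017}, finally using \cref{prop:baf} to move from $\Falg$ to $\Fmaps$; you instead transpose the lifting problems directly on the fibration side, which is arguably more streamlined for this particular lemma but relies in the same way on the algebraic-freeness of the awfs (to convert between $F$-map structures and uniform lifting operations against $\Em_{\htimes}$, \cref{remark:strange1}) and on the Leibniz adjunction's compatibility with the morphisms $\delta^k\hat{\times}\sigma$ of $\Em_{\htimes}$, which you correctly flag as the remaining bookkeeping.
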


\begin{proof} Since the awfs of trivial uniform fibrations $(C, F_t)$ is suitable (see \cite[Definition~7.1]{Gambino_2017}),  the functor $\delta^k \hat{\times} - $ lifts to the category $\Cmaps$ and $\delta^k \hat{\times} - $ also factors though the category \category{SE} of strong homotopy equivalences
by \cite[Lemma~8.4]{Gambino_2017}. Combining these two facts, we obtain a lift  
\[
\delta^k \hat{\times} (-) \co \Cmaps \to  \Cmaps \times_{\catE^{\rightarrow}} \category{SE} \rlap{.}
\]
By \cite[Proposition~8.5]{Gambino_2017}, we have a functor $\Cmaps \times_{\catE^{\rightarrow}} \category{SE} \rightarrow \Ctmaps$ over $\catE^{\rightarrow}$, composing with the one above, we obtain a lift of $\delta^k \hat{\times} - $
\begin{equation}
\label{equ:first-lift}
\delta^k \hat{\times} (-)  \co  \Cmaps \to \Ctmaps \rlap{.}
\end{equation} 
By  functorial orthogonality arguments with respect to the pushout-product and pullback-exponential constructions (see \cite[Proposition~5.11]{Gambino_2017} and \cite[Remark~5.12]{Gambino_2017}) together with the hypothesis (M5), the functor $i \hat{\times} - \co \Em \rightarrow \Em$ lifts to the category $\Cmaps$, 
\begin{equation}
\label{equ:second-lift} 
i \hat{\times} (-) \co  \Cmaps \to \Cmaps  \rlap{.}
\end{equation}

Applying the lifts in~\eqref{equ:first-lift} and~\eqref{equ:second-lift}, together with the fact that $(C, F_t)$ is algebraically-free on the category of arrows $\Em \rightarrow \catE^{\rightarrow}$, as witnessed by the functor $\eta \co \Em \rightarrow \Calg$, we obtain the diagram
\[
\xymatrix{
\Em \ar[r]^-{\tilde{\eta}} \ar[dr] & \Cmaps \ar[r]^{i \hat{\times} - } \ar[d] & \Cmaps \ar[r]^{\delta^k \hat{\times} -} \ar[d]& \Ctmaps \ar[d] \\
&  \catE^{\rightarrow} \ar[r]_{i \hat{\times} -} &  \catE^{\rightarrow} \ar[r]_{\delta^k \hat{\times} -}  & \catE^{\rightarrow} \rlap{,}
}
\] 
where $\tilde{\eta}$ is the composite of  $\eta$ and the forgetful functor from $C$-algebras to $C$-maps. 

By symmetry of the pushout-product functor, we obtain a natural isomorphism between $i \htimes \delta^k \htimes -$ and $\delta^k \htimes i \htimes -$.
We can transfer the algebraic structure along this natural isomorphism in order to obtain the following lift:
\[
\xymatrix{
\Em \ar[d] \ar[rr] && \Ctmaps \ar[d] \\
\catE^{\rightarrow} \ar[r]_{\delta_k \htimes -} & \catE^{\rightarrow} \ar[r]_{i \htimes -} & \catE^{\rightarrow}
\rlap{.}
}
\]
Taking the coproduct of these lifts for $k = 0, 1$ we obtain a lift of $i \htimes -$, \rlap{.}
\begin{equation}
\label{equ:third-lift}
{i \htimes (-)} \co  \Em_{\htimes} \to \Ctmaps \rlap{.}
\end{equation}

Using that $\Ctmaps \cong {^\boxslash}\Falg$ (cf.~\cref{prop:liftings}) and that $(C_t, F)$ is algebraically-free on $\Em_{\htimes}$, we can apply \cite[Proposition~5.9]{Gambino_2017} to~\eqref{equ:third-lift} and obtain
\[
\xymatrix{
\Falg \ar[rr]^{\hhom(i, -)} \ar[d] && (\Em_{\htimes})^{\boxslash} \ar[d] \ar[r]^{\cong} & \Falg \ar[dl]  \\
\catE^{\rightarrow} \ar[rr]_{\hhom(i, -)} && \catE^{\rightarrow}  \rlap{.} &
}
\]

By the top pullback square in \eqref{Fig:bigfatfig}, the morphism $\rho_f \co Pf \rightarrow B \times_A B$ is obtained in the following two steps:
\[
f \mapsto \hhom(i, f) \mapsto \abra{\alpha_f, \lambda_f}^*\hhom(i, f) = \rho_f \rlap{,}
\]
i.e. by first applying $\hhom(i, -)$ and then pulling back along $\abra{\alpha_f, \lambda_f}$. Since we have lifts of $\hhom(i, -)$ and of the pullback functor to the category of $F$-algebras, we obtain a lift of $\rho$, 
\[
\xymatrix{
\Falg \ar@{.>}@/^20pt/[rrrr]^{\rho} \ar[rr]_{\hhom(i, -)} && \Falg \ar[rr]_{PB(-, \abra{\alpha, \lambda})} && \Falg \rlap{.}
}
\]

Finally, as we are working with an algebraically-free awfs, we have lifts back-and-forth between $\Ralg$ and $\Rmaps$ over $\catE^{\rightarrow}$ (cf.~\cref{prop:baf}), and thus we can transfer the lift of $\rho$ from the category of $R$-algebras to that of $R$-maps. 
\end{proof}

\begin{lemma} \label{claim:2}
Consider a suitable topos $(\catE, I, \Em)$ and let $(C_t, F)$ be a corresponding awfs of uniform fibrations on $\catE$. Then the first component $r \co  \catE^{\rightarrow} \rightarrow \catE^{\rightarrow}$ of the factorisation~$\lcat{P}_I$ lifts to the category of strong deformation retracts, 
\[
\xymatrix{
\Fmaps \ar[d] \ar@{.>}[r]^{{r}} & \category{SDR} \ar[d] \\
\catE^{\rightarrow} \ar[r]_r & \catE^{\rightarrow} \rlap{.}
}
\]
\end{lemma}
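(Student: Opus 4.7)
The plan is to construct the SDR structure on $r_f$ using only the interval object and its connection operations, independently of any $F$-map structure on $f$; functoriality in $f \in \catE^\rightarrow$ will then restrict to $\Fmaps$ and produce the required lift over $\catE^\rightarrow$.

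For the retraction, I take $\lambda_1 \co Pf \to B$ to be the source map $s_f$, obtained as the composite $Pf \to B^I \xrightarrow{B^{\delta^0}} B$ from the pullback $\eqref{Fig:WPO}$ defining $Pf$. The identity $\lambda_1 \circ r_f = 1_B$ then follows from $\epsilon \circ \delta^0 = 1_\bot$, since the $B^I$-component of $r_f$ is $B^\epsilon$. For the homotopy $\lambda_2 \co I \times Pf \to Pf$, I invoke the universal property of $Pf$ as a pullback of $f^I$ along $A^\epsilon$: it suffices to produce compatible morphisms into $A$ and into $B^I$. The morphism into $A$ is the projection $I \times Pf \to Pf$ followed by $Pf \to A$. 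The morphism into $B^I$ is the transpose, under the adjunction $I \times (-) \dashv (-)^I$, of the composite
\[
I \times I \times Pf \xrightarrow{\,c^0 \times 1_{Pf}\,} I \times Pf \xrightarrow{\,ev\,} B,
\]
where $ev$ is the evaluation attached to the canonical $Pf \to B^I$. Their compatibility after post-composition with $A^\epsilon$ and $f^I$ respectively is immediate, because any $p \in Pf$ satisfies $f \circ p = A^\epsilon(p_A)$, so the second composite is independent of the two $I$ factors.

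The SDR axioms then reduce to diagram chases driven by the connection identities. From $c^0 \circ (\delta^0 \times I) = \delta^0 \circ \epsilon$ I deduce $\lambda_2 \circ (\delta^0 \times Pf) = r_f \circ \lambda_1$; from $c^0 \circ (\delta^1 \times I) = 1_I$ I deduce $\lambda_2 \circ (\delta^1 \times Pf) = 1_{Pf}$. This equips $\lambda_2$ with the structure of a $0$-oriented homotopy from $r_f \circ \lambda_1$ to $1_{Pf}$. The strength condition $\lambda_2 \circ (I \times r_f) = r_f \circ \pi_B$ holds because, for $p = r_f(b)$, the composite whose transpose defines the $B^I$-component of $\lambda_2$ reduces to the constant map at $b$ for every choice of $t$, matching the constant path $B^\epsilon(b)$ that is the $B^I$-component of $r_f(b)$. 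Finally, functoriality in $f$ is naturality throughout: for any $(h, k) \co f' \to f$, the induced $P(h, k) \co Pf' \to Pf$ intertwines source maps (by naturality of $B^{\delta^0}$ and of the defining pullback) and intertwines the homotopies (by naturality of $(-)^I$, of the exponential transpose, and of $c^0$). Since $\Fmaps \to \catE^\rightarrow$ is faithful, this upgrades to the required functor $\Fmaps \to \category{SDR}$ over $\catE^\rightarrow$. The main technical subtlety is merely the bookkeeping needed to juggle the universal property of $Pf$ and the exponential adjunction; no homotopy-theoretic input beyond the axioms of an interval with connections is required.
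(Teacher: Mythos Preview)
Your argument is correct, and it is genuinely different from the paper's. The paper chooses the \emph{target} map $t_f \co Pf \to B$ as the retraction of $r_f$, and then obtains the homotopy $H \co r_f \circ t_f \sim 1_{Pf}$ non-explicitly, as a filler of a lifting problem: it first shows that $t_{(-)}$ lifts to a functor $\Fmaps \to \Ftmaps$ (via $\hhom(\delta^1,-)$ and pullback), deduces that $\hhom(i, t_f)$ is a trivial uniform fibration, and then lifts the initial map $\emptyset \to Pf$ against it. Functoriality in $f$ then requires a careful argument that the chosen fillers cohere along morphisms of $\Fmaps$, using that the relevant squares are morphisms of $\Em$ and of $F_t$-maps.

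By contrast, you take the \emph{source} map $s_f$ and write down the homotopy explicitly using the connection $c^0$. This has two consequences worth noting. First, your construction does not use the uniform fibration structure on $f$ at all: you in fact produce a lift $r \co \catE^{\rightarrow} \to \category{SDR}$, of which the statement of the lemma is a restriction. Second, functoriality is immediate, since every piece of data is defined by explicit formulas natural in $f$; no coherence of fillers needs to be checked. The paper's route, on the other hand, stays within the orthogonality/lifting formalism of \cite{Gambino_2017} and reuses the machinery (e.g.\ the lift of $\hhom(\delta^1,-)$) already set up there; your route bypasses that machinery entirely at the cost of a small amount of direct bookkeeping with the connection identities.
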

\begin{proof}
We first show that the target map functor (that takes a map $f \co B \rightarrow A$ to a map $t_{f} \co Pf \rightarrow B$) lifts to a functor from $\Fmaps$ to $\Ftmaps$. Using that we have a lift $\delta^1 \htimes - \co \Cmaps \rightarrow \Ctmaps$ as shown in the proof of \cref{claim:1}, we can transpose using \cite[Proposition~5.9]{Gambino_2017} to obtain a lift of $\hhom(\delta^1, -)$:
\[
\hhom(\delta^1, -) \co \Falg \to \Ftalg \rlap{.}
\]

Looking at \eqref{Fig:bigfatfig}, note that $t_f \co Pf \rightarrow B$ is obtained by applying $\hhom(\delta^1, -)$ to $f$ and then pulling back along $\abra{\beta_f, 1_B}$. Thus, the functor mapping $f \mapsto t_f$ lifts as
\begin{gather}
\begin{aligned}
\xymatrix{
\Falg \ar@{.>}@/^20pt/[rrrr]^{t_{(-)}} \ar[rr]_{\hhom(\delta^1, -)}  && \Ftalg \ar[rr]_{PB(-, \abra{\beta, 1})} && \Ftalg \rlap{.}
}
\end{aligned}
\label{Fig:liftdelta1}
\end{gather}
Since both awfs in question are algebraically-free, we can apply \cref{prop:baf} to obtain the desired lift.

Let us return to the task of finding a lift of the functor $r \co \catE^{\rightarrow} \rightarrow \catE^{\rightarrow}$ to a functor $r \co \Fmaps \rightarrow \category{SDR}$. For this, we show that for each uniform fibration $(f, s) \co B \rightarrow A$ the target map $t_f \co Pf \rightarrow B$ is a strong homotopy retraction of $r_f \co B \rightarrow Pf$.

Looking again at~\eqref{Fig:bigfatfig} it is clear that $t_f \circ r_f = 1_B$. Thus, we are left with the task of constructing an homotopy $H \co r_f \circ t_f \sim 1_{P f}$, for this consider the  commutative diagram
\[
\xymatrix{
P f \ar[rr]^{\abra{r_f \circ t_f,  1_{P f}}} \ar[d]_{B^{\epsilon} \circ t_f} && P f^{\partial I} \ar[d]^{t_f^{\partial I}} \\
B^I \ar[rr]_{B^i} && B^{\partial I}  \rlap{,}
}
\]
where the top horizontal arrow is given by the universal property of~$P f^{\partial I} \cong P f \times P f$. This gives us an arrow into the pullback
\[
\tilde{H} \co P f \rightarrow B^I \times_{B^{\partial I}} P f^{\partial I}.
\]

We already have a lift of the target map $t_{(-)} \co \Fmaps \rightarrow \Ftmaps$. Combining this with the fact that $\hhom(i, -)$ lifts to $\Ftmaps$ (which follows by similar arguments to those used in the proof of \cref{claim:1}), $\hhom(i, t_{(-)})$ lifts to a functor
\[
\hhom(i, t_{(-)}) \co  \Fmaps \to \Ftmaps \rlap{,}
\]
which we can apply to $f$ to obtain a uniform trivial fibration $\hhom(i, t_f)$.

By part~(M2) of \cref{defn:suittops}, for every object $X \in \catE$, the map $\emptyset \rightarrow X$ is in $\Em$. Using this, we obtain a morphism $H$ as the canonical filler in 
\[
\xymatrix{
\emptyset \ar[d] \ar[rr] && P f^{I} \ar[d]^{\hhom(i, t_f)} \\
P f \ar[rr] _{\tilde{H}} \ar@{.>}[urr]|H && B^I \times_{B^{\partial I}} P f^{\partial I} \rlap{.}
}
\]
It is straightforward to verify that this $H$ is actually an homotopy from $r_f \circ t_f$ to $1_{P f}$. This shows that $t_f$ is a strong deformation retract of $r_f$.

We have given the action of the desired lift $r \co \Fmaps \rightarrow \category{SDR} $ on objects . To show that this construction is functorial on $f$, consider a morphism of $\Fmaps$ $(h, k) \co f' \rightarrow f$. Since the factorisation of the diagonal is functorial, we obtain the  diagram
\[
\xymatrix{
B' \ar[rr]^{h} \ar[d]_{r_{f'}} && B \ar[d]^{r_f} \\
P f' \ar[rr]|{P(h, k)} \ar[d]_{t_{f'}} && P f \ar[d]^{t_f} \\
B' \ar[rr]_{h} && B \rlap{.}
}
\]
The bottom square is a morphism of $\Ftmaps$ since it is the result of applying the lift of~$t_{(-)}$ of (\ref{Fig:liftdelta1}) to the square~$(h, k)$. 
Let us prove that $(h, P(h, k)) \co r_{f'} \rightarrow r_{f}$ is a morphism of strong deformation retracts. Looking at the definition of a morphism of homotopy equivalences (in the paragraph before \cite[Lemma~8.1]{Gambino_2017}), we observe that the only thing we need to show is that the following diagram commutes:
\[
\xymatrix{
P f' \ar[rr]^{P(h, k)} \ar[d]_{H'} && P f \ar[d]^{H} \\
P f'^I \ar[rr]_{P(h, k)^I} && P f^I \rlap{,}
}
\]
where the left and right horizontal arrows are the homotopies witnessing that $r_{f'}$ and $r_f$ respectively are strong deformation retracts. For this, we make use of the naturality of the filling operations. Consider the diagrams:
\begin{equation}
\label{equ:top-diag}
\begin{gathered} 
\xymatrix{
\emptyset \ar[d] \ar[rr] && \emptyset \ar[d] \ar[rr] && P f^I \ar[d]^{\hhom(i, t_f)} \\
P f' \ar[rr]_{P (h, k)} \ar@{.>}[urrrr]|(.3){L'} && P f \ar@{.>}[urr]|H \ar[rr]_{\tilde{H}} && f^I \times_{B^{\partial I}} P f^I \rlap{,} 
} \end{gathered} 
\end{equation}
\begin{equation}
\label{equ:bottom-diag}
\begin{gathered} 
\xymatrix{
\emptyset \ar[d] \ar[rr] && P f'^I \ar[d]|{\hhom(i, t_{f'})} \ar[rr]^{P (h, k)^I} && P f^I \ar[d]^{\hhom(i, t_f)} \\
P f' \ar[rr]_{\tilde{H}'} \ar@{.>}[urrrr]|(.7)L \ar@{.>}[urr]|{H'} && f'^I \times_{B'^{\partial I}} P f'^I \ar[rr]_{h^I \times_{h^{\partial I}} P(h, k)^{\partial I}} && f^I \times_{B^{\partial I}} P f^I \rlap{.}
}
 \end{gathered}
\end{equation} 
The left square in~\eqref{equ:top-diag} is a morphism in $\Em$ since it is  Cartesian. The right square of~\eqref{equ:bottom-diag} is a morphism of $F_t$-maps since it is the result of applying the lift~$\hhom(i, t_{(-)}) \co \Fmaps \rightarrow \Ftmaps$ to the square $(h, k)$ which is, by hypothesis, a morphism of $F$-maps. Hence, the corresponding lifts cohere.  

Since the construction of the maps $\tilde{H}$ and $\tilde{H}'$ is given by a universal property, it
is functorial and so
the diagram \[
\xymatrix{
P f' \ar[rrr]^{P (h, k)} \ar[d]_{\tilde{H}'} &&& P f \ar[d]^{\tilde{H}} \\
 B'^I \times_{B'^{\partial I}} P f'^I \ar[rrr]_{h^I \times_{h^{\partial I}} P(h, k)^{\partial I}} &&& B^I \times_{B^{\partial I}} P f^I
}
\] 
commutes. Thus, the composition of the bottom horizontal arrows in~\eqref{equ:top-diag} 
and~\eqref{equ:bottom-diag} coincide. This makes the filler $L'$ and $L$ in diagrams \ref{equ:top-diag} and \ref{equ:bottom-diag} respectively, the same morphism and thus 
$H \circ P (h, k) = L' = L = P (h , k)^I \circ H'$, as required.
\end{proof}

Using \cref{claim:1} and \cref{claim:2}, we can prove the following proposition. 

\begin{proposition} \label{theorem:main1}
Consider a suitable topos $(\catE, I, \Em)$ and let $(C_t, F)$ be a corresponding awfs of uniform fibrations on $\catE$. Suppose that the following additional hypotheses hold:
\begin{enumerate}
\item[(M5)] Maps in $\Em$ are closed under pushout-product against the boundary inclusion $i \co \partial I \rightarrow I$, i.e. for any $j \in \Em$, we have that $i \hat{\times} j \in \Em$.
\item[(M6)] For any $f \co B \rightarrow A$ in $\catE$, the first leg map $r_f \co B \rightarrow Pf$ from the factorisation of the diagonal $\lcat{P}_I$ belongs to $\Em$:
\end{enumerate}
Then, the factorisation of diagonal $\lcat{P}_I = \abra{r, \rho}$ induced from the interval object, lifts to a stable functorial choice of path objects for $(C_t, F)$:
\[
\lcat{P}_I \co 
\Fmaps \to \Ctmaps \times_{\catE} \Fmaps \rlap{.}
\] 
\end{proposition}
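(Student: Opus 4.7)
By Definition~\ref{defn:SFPO}, the task is to exhibit a lift $\lcat{P}_I \colon \Fmaps \to \Ctmaps \times_\catE \Fmaps$ of the already stable factorisation of the diagonal $\lcat{P}_I = \langle r, \rho \rangle$. The second component, which equips each $\rho_f$ with an $F$-map structure functorially in $(f, s)$, is precisely the content of \cref{claim:1}, which is where hypothesis~(M5) enters. The remaining task is therefore to equip each $r_f$ with a $C_t$-map structure, functorially in $(f, s)$ and compatibly with the $F$-map structure on $\rho_f$.

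My plan for the first lift is to combine two separate algebraic structures on $r_f$ and then invoke the combining functor from \cite{Gambino_2017}. Hypothesis~(M6) ensures $r_f \in \Em$; by algebraic freeness of $(C, F_t)$ on $\Em$ (\cref{remark:strange}), the unit $\eta \colon \Em \to \Cmaps$ supplies a canonical $C$-map structure on $r_f$. In parallel, \cref{claim:2} equips $r_f$ with a strong deformation retract structure, which, after composing with the canonical forgetful functor $\category{SDR} \to \category{SE}$, yields a strong homotopy equivalence structure on $r_f$. By Proposition~8.5 of~\cite{Gambino_2017}, these two structures combine via the functor $\Cmaps \times_{\catE^\rightarrow} \category{SE} \to \Ctmaps$ to produce the required $C_t$-map structure on $r_f$.

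To complete the proof, I must verify functoriality and stability. For a morphism $(h,k) \colon f' \to f$ in $\Fmaps$, \cref{claim:2} already provides that $(h, P(h,k))$ is a morphism of strong deformation retracts; the functoriality of $\eta$ on members of $\Em$ then yields that it is also a morphism of $C$-maps, and hence by functoriality of the combining construction it is a morphism of $C_t$-maps. Compatibility with the $\Fmaps$-morphism $(P(h,k), h \times_k h) \colon \rho_{f'} \to \rho_f$ supplied by \cref{claim:1} is automatic, since both morphisms live over the common middle-object arrow $P(h,k)$ in $\catE^\rightarrow$. Stability is inherited directly from the already established stability of $\lcat{P}_I$ as a functorial factorisation of the diagonal, which in turn is immediate because $Pf$ is constructed via the pullback in~\eqref{Fig:WPO} and pullbacks are preserved under Cartesian base change.

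The main subtlety I expect to dwell on is the functoriality of the $C$-map structure across squares $(h, P(h,k))$ that need not themselves be Cartesian in $\Em$; this must be extracted from the universal (``initial'') character of the $\eta$-induced $C$-coalgebra structures arising from Garner's small object argument, together with the fact that the forgetful functor $\Cmaps \to \catE^\rightarrow$ is faithful. Once this coherence is settled, the assembly of the lifts of $r$ and $\rho$ into the pullback $\Ctmaps \times_\catE \Fmaps$ is formal.
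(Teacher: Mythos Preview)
Your approach is essentially the paper's: use \cref{claim:1} for the $\rho$-leg, and for the $r$-leg combine the $\category{SDR}$-structure from \cref{claim:2} with a $C$-map structure obtained from hypothesis~(M6) via the unit $\Em \to \Cmaps$, then apply \cite[Proposition~8.5]{Gambino_2017}. This matches the paper's argument step for step.

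One remark on the ``subtlety'' you flag. The paper does not resolve it via any initiality property of the free $C$-coalgebra structures; that argument would not work, since the forgetful functor $\Cmaps \to \catE^\rightarrow$ being faithful gives uniqueness, not existence, of morphisms. Instead, the paper observes that because $\lcat P_I$ is \emph{stable}, the functor $r$ sends Cartesian squares to Cartesian squares (this follows from pullback pasting: both $\delta_{(h,k)}$ and $\rho_{(h,k)}$ are Cartesian, hence so is $r_{(h,k)}$). Thus $r$ genuinely factors through $\Em$ as a category of arrows \emph{when restricted to Cartesian morphisms}, and $\eta_\Em$ is then functorial on exactly those. Since the application to pseudo-stable $\mathsf{Id}$-types (\cref{prop:SFPO->PSID}) only ever invokes the lift on Cartesian morphisms of $\Fmaps$, this suffices; full functoriality on non-Cartesian squares is neither claimed carefully in the paper nor needed. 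Your proposed fix is therefore unnecessary, and you should replace it with the observation that stability of $\lcat P_I$ forces $r_{(h,k)}$ to be Cartesian whenever $(h,k)$ is.
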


\begin{proof}
Recall that the factorisation of the diagonal $\lcat{P}_I = \abra{r, \rho}$ is divided into two functors~$r, \rho \co \catE^{\rightarrow} \rightarrow \catE^{\rightarrow}$. By \cref{claim:1}, $\rho$ lifts to the category $\Fmaps$,
so it remains to show that $r \co \catE^{\rightarrow} \rightarrow \catE^{\rightarrow}$ lifts to a functor $r \co \Fmaps \rightarrow \Ctmaps$. This follows from two observations. First, since the factorisation of the diagonal is stable,~$r$ preserves Cartesian squares and thus, by item (M6) in the hypothesis of the theorem,~$r$ lifts 
to~$\Em$ (considered as a category of arrows), $r \co \catE^{\rightarrow} \to \Em$.
Secondly, consider the unit $\eta_{\Em} \co \Em \rightarrow {^\boxslash}(\Em^{\boxslash})$ of the orthogonality adjunction of~\ref{equ:orth-adj} and note that since $(C, F_t)$ is algebraically-free on $\Em$, we obtain a morphism in the slice over $\catE^{\rightarrow}$, 
\[
\eta_{\Em} \co  \Em \to \Cmaps \rlap{.}
\]
We can compose these last two lifts to obtain $r \co \catE^{\rightarrow} \rightarrow \Cmaps$.

Finally, we can combine the lifts $r \co \Fmaps \rightarrow \category{SDR}$ from \cref{claim:2} with $r \co \catE^{\rightarrow} \rightarrow \Cmaps$ and apply \cite[Proposition~8.5]{Gambino_2017} in order to obtain the desired lift of $r$, 
\[
\xymatrix{
\Fmaps \ar\ar@{.>}@/^20pt/[rr]^r \ar[r] & \Cmaps \times_{\catE^{\rightarrow}} \category{SE} \ar[r] & \Ctmaps
} \rlap{.} \qedhere
\]
\end{proof}

We introduce the following definition to summarise our results. 

\begin{definition} \label{defn:ttst}
A {\em type-theoretic suitable topos} consists of a suitable topos $(\catE, I, \Em)$ (see \cref{defn:suittops}) which moreover satisfy the conditions (M5) and (M6) in the hypothesis of \cref{theorem:main1}. 
\end{definition} 


\begin{theorem} \label{theorem:main}
Let $(\catE, I, \Em)$ be a type-theoretic suitable topos, and let $(C_t, F)$ be the awfs of uniform fibrations on $\catE$. Then $(C_t, F)$ is equipped with the structure of a type-theoretic awfs.
\end{theorem}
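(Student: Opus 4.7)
The plan is simply to verify that the three items of \cref{defn:algmod} hold for the awfs $(C_t, F)$ of uniform fibrations on a type-theoretic suitable topos $(\catE, I, \Em)$, with each verification reducing to a result already available.

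First I would address the exponentiability property. Since $\catE$ is a Grothendieck topos, it is locally Cartesian closed, so pushforwards along any arrow exist in $\catE$. As noted in the discussion immediately after \cref{defn:expon}, this is enough to guarantee that $(C_t, F)$ satisfies the exponentiability property of \cref{defn:expon}: for any composable pair $g \co Z \to Y$, $f \co Y \to X$ in the image of $\Fmaps \to \catE^{\rightarrow}$, the exponential $\Pi_f g$ exists in $\catE/X$.

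Next, for the functorial Frobenius structure, I would simply invoke~\cite[Theorem~8.8]{Gambino_2017}, which is proved precisely under the hypothesis that $(\catE, I, \Em)$ is a suitable topos (the additional conditions (M5) and (M6) of \cref{defn:ttst} are not required for this step). This provides the lift $\widetilde{PB} \co \Fmaps \times_\catE \Ctmaps \to \Ctmaps$ required by \cref{defn:FFstructure}.

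Finally, for the stable functorial choice of path objects, I would apply \cref{theorem:main1}. Here we do use the full strength of the assumption that $(\catE, I, \Em)$ is a type-theoretic suitable topos, since the hypotheses (M5) and (M6) in \cref{defn:ttst} are exactly those needed in \cref{theorem:main1} to lift the factorisation of the diagonal $\lcat{P}_I = \langle r, \rho\rangle$ arising from the interval object to a functor $\Fmaps \to \Ctmaps \times_\catE \Fmaps$. With all three items of \cref{defn:algmod} in place, $(C_t, F)$ carries the structure of a type-theoretic awfs. There is no real obstacle here: all the substantive work was done in establishing~\cref{theorem:main1} and in the earlier results of~\cite{Gambino_2017}, and the present theorem is merely the assembly of these pieces.
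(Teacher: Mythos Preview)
Your proposal is correct and follows essentially the same approach as the paper's own proof, which simply cites \cite[Theorem~8.8]{Gambino_2017} and \cref{theorem:main1}. You are slightly more explicit in separately checking the exponentiability property via local Cartesian closure of the Grothendieck topos $\catE$, which the paper leaves implicit.
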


\begin{proof}
The result follows from \cite[Theorem~8.8]{Gambino_2017} and \cref{theorem:main1}.
\end{proof}

The next result allows us to construct examples of type-theoretic suitable topos.

\begin{proposition} \label{prop:main1}
Consider $\catE$ be a Grothendieck topos equipped with an interval object $I$ with connections. Let $\Em_{all}$ be the class that consists of all monomophisms of $\catE$. Then the tuple $(\catE, I, \Em_{all})$ is a type-theoretic suitable topos.
\end{proposition}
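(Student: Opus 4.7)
\medskip

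\noindent\textbf{Proof plan.} The goal is to verify the six axioms (M1)--(M6) for the tuple $(\catE, I, \Em_{all})$. Conditions (M1)--(M4) of \cref{defn:suittops} are standard topos-theoretic facts, so I would dispatch them first in a brief paragraph: (M1) is immediate, (M2) follows because the initial object of a topos is strict, hence $\emptyset \to X$ is a monomorphism, (M3) is the standard stability of monomorphisms under pullback, and (M4) is the well-known fact that the pushout-product of two monomorphisms in a topos is again a monomorphism (using that pushouts of monos along monos are van Kampen squares and that coproducts of monic subobjects remain monic). Since each $\delta^k \co \bot \to I$ is a monomorphism (as every map out of a terminal object is), (M4) reduces to this general fact.

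The substantial verifications are (M5) and (M6). For (M5) I would first argue that the boundary inclusion $i \co \partial I \to I$ is itself a monomorphism. By construction, $\partial I \cong \bot + \bot$ with the two coproduct legs sent to $I$ by $\delta^0$ and $\delta^1$ respectively; both legs are monomorphisms (being maps out of the terminal object), and by hypothesis they are disjoint (their pullback is the initial object). In a Grothendieck topos, coproducts are disjoint and stable, so two disjoint subobjects of $I$ sum to a subobject, which says precisely that the induced arrow $\bot + \bot \to I$ is monic. Once $i$ is known to be monic, (M5) follows from the pushout-product closure of monomorphisms in a topos, exactly as in (M4).

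For (M6) I would argue that $r_f \co B \to Pf$ is a monomorphism by factoring it through a split monomorphism. Recall from~\eqref{Fig:WPO} that $Pf = A \times_{A^I} B^I$ and that $r_f$ is the universal map induced by $f \co B \to A$ and $B^\epsilon \co B \to B^I$; in particular the projection $\pi \co Pf \to B^I$ satisfies $\pi \circ r_f = B^\epsilon$. Now $B^\epsilon \co B \to B^I$ is a split monomorphism: applying $B^{(-)}$ to either identity $\epsilon \circ \delta^k = 1_{\bot}$ yields $B^{\delta^k} \circ B^\epsilon = 1_B$. Hence $B^\epsilon$ is monic, and since $\pi \circ r_f$ is monic, so is $r_f$.

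None of these steps presents a real obstacle; the proof is essentially a concatenation of standard facts, with the only conceptual point being the identification of $i \co \partial I \to I$ as the codiagonal-style inclusion of the disjoint endpoints and the observation that $r_f$ is always a split-monic factorisation via $B^\epsilon$. I would therefore expect the proof to consist of a single short paragraph per axiom, with (M5) and (M6) written out in slightly more detail.
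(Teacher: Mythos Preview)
Your proposal is correct and follows essentially the same route as the paper: dispatch (M1)--(M3) as elementary, reduce (M4) and (M5) to the fact that the pushout-product of two monomorphisms in a topos is monic (the paper phrases this as ``computes the join of subobjects''), and for (M6) exhibit $r_f$ as a split monomorphism. The only cosmetic difference is in (M6): the paper observes directly that $r_f$ is a section of the target map $t_f \co Pf \to B$, whereas you factor through $\pi \co Pf \to B^I$ and use that $B^\epsilon$ is split monic---but since $t_f = B^{\delta^1} \circ \pi$, your retraction is literally the paper's $t_f$, so the two arguments coincide.
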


\begin{proof}
We need to verify conditions (M1)-(M6) from the definition of type-theoretic suitable topos. By elementary properties of monomorphisms, it is clear that (M1)-(M3) hold. Condition (M4) follows because in the topos $\catE$ the pushout-product construction~$\delta^k \htimes j$ for a given monomorphism~$j \co A \rightarrow B$, computes the join (or union) of the subobjects~$\delta^k \times B \co B \rightarrow I \times B$ and $I \times j \co I \times A \rightarrow I \times B$, which is again a subobject of $I \times B$ and in particular a monomorphism. The same arguments applies for condition (M5). Finally condition (M6) follows since for any map $f \co X \rightarrow Y$, the morphism $r_f \co X \rightarrow Pf$ is the section of the target map $t_f \co Pf \rightarrow X$ and in particular, it is a monomorphism. 
\end{proof}

\begin{example} We can instantiate \cref{prop:main1} on the presheaf toposes of simplicial sets $\SSet$ and of cubical sets $\CSet$ equipped with the obvious choices of interval objects given by the representable $1$-simplex and $1$-cube respectively. We thus obtain type-theoretic awfs on $\SSet$ and $\CSet$. 
In $\SSet$, assuming classical logic, a morphism admits the structure of a uniform fibration if and only if it is a Kan fibration by~\cite[Theorem~9.9]{Gambino_2017}.
\end{example}

\begin{remark} \label{note:nonconstruct}
Although the proof of \cref{theorem:main} is constructive, in order to construct a univalent universe \`{a} la Hofmann-Streicher \cite{Hofmann:aa} in a constructive setting, 
it is necessary to restrict the category $\Em_{all}$ of generating monomorphisms to that of decidable ones; \emph{i.e.}~those mononomorphism  that have level-wise decidable image\cite{Orton_2017}.
The arguments in this section do not apply if we take $\Em_{dec}$ as the category of generating monomorphisms, where $\Em_{dec}$ is the subclass of $\Em_{all}$ of decidable monomorphisms (for either $\SSet$ or $\CSet$). The issue lies in condition (M6), 
\emph{i.e.}~that the first leg  $r_{f} \co X \rightarrow Pf$  of the factorisation of the diagonal of a morphism $f \co X \rightarrow Y$ is in $\Em_{dec}$. Intuitively, the morphism $r_f$ maps an object of $x$ of $X$ to the degenerate path on $x$, this morphism is not decidable because, in general, it is not possible to decide degeneracies \cite{Bezem_2015}. Because of this, in the next section we turn our attention to normal uniform fibrations (see~\cref{rem:follow-up-on-costructivity}). An alternative approach would be to restrict attention to cofibrant objects, as in~\cite{Gambino_2019}.
\end{remark}

\section{Normal uniform fibrations} \label{sect:NUF}
In this section, we develop the notion of a normal uniform fibration in the context of a suitable topos $(\catE, I, \Em)$ (\cref{defn:suittops}). Recall from the discussion before \cref{remark:strange} that the category of arrows of uniform fibrations was constructed by right orthogonality form the category of arrows $\Em_{\htimes}$ over $\catE$, whose objects are maps of the form $\delta^k \htimes j$ with $j \in \Em$ and $k \in \{0,1\}$.  Moreover, recall from  \cref{remark:strange} that  we cannot use Garner's small object argument directly with $\Em_{\htimes}$ to construct the awfs of uniform fibrations $(C_t, F)$ as $\Em_{\htimes}$ is not small; instead, we need to restrict to the small category of arrows $\Ai_{\htimes}$ that consists of arrows $\delta^k \htimes j$ with $j \in \Ai$, where $\Ai$ consists of the arrows in $\Em$ whose codomain lie in a fixed small dense subcategory of $\catE$.  

We define a new category of arrows $\nufib \rightarrow \catE^{\rightarrow}$ such that a right $\nufib$-map will consist of a uniform fibration with an extra normality property. The idea is that $\nufib \rightarrow \catE^{\rightarrow}$ is obtained form $\ufib \rightarrow \catE^{\rightarrow}$ by adding, for each generating monomorphism $i : A \mono B$ and for $k \in \{0,1\}$, the coherence square on the left in
\begin{equation}
\label{Fig:fignotbig}
\begin{gathered} 
\xymatrix{
B +_{A} (I \times A) \ar[d]_{\delta^k \htimes i} \ar[r]^(0.70){\squi_k(i)} & B \ar@{=}[d] \\
I \times B \ar[r]_(0.60){\epsilon \times B} & B \rlap{,} }
\end{gathered}
\qquad
\begin{gathered} 
\xymatrix{
A \ar[r]^i \ar[d]_{\delta^k \times A} & B \ar[d]^{\delta^k \times B} \ar@/^18pt/[ddr]^{1_B} &\\
I \times A \ar[r] \ar@/_8pt/[rd]_{\epsilon \times A} & B +_{A} (I \times A) \poexcursion \ar@{.>}[dr]|{\squi_k(i)} & \\ 
&A\ar[r]_i & B \, ,
}
\end{gathered}
\end{equation}
where $\squi_k(i) \co B +_{A} (I \times A) \rightarrow B$ is the universal map out of the pushout as described on the right of the previous diagram. The arrows $\epsilon \times B$ and $\epsilon \times A$ are the projections from the second component of the product. We refer to the square on the left of (\ref{Fig:fignotbig}) as the \emph{$k$-squash square} of $i \co A \mono B$ and we denote it by \[\squish_k(i) \co \delta^k \htimes i \rightarrow 1_B.\] The name follows the intuition of squashing the mapping cylinder in the direction of the interval (\emph{i.e.}~the filling direction). The following technical result about squash squares will be needed in what follows.

\begin{lemma} \label{lem:squish}
Let $k \in \{0,1\}$ and consider monomophisms $i \co A \mono B$ and $j \co C \mono D$. Then applying the pushout-product functor $(j \htimes -) \co \catE^{\rightarrow} \rightarrow \catE^{\rightarrow}$ to the k-squash square of $i \co A \mono B$, produces the k-squash square of $j \htimes i$; that is: 
\[
j \htimes (\squish_k(i)) \cong \squish_k(j \htimes i) \co \delta^k \htimes (j \htimes i) \rightarrow 1_{D \times B} \, .
\]
\end{lemma}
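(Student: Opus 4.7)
The plan is to exploit the symmetric monoidal structure on $\catE^{\rightarrow}$ afforded by the pushout-product. Applying the functor $(j \htimes -) \co \catE^{\rightarrow} \to \catE^{\rightarrow}$ to the morphism $\squish_k(i)$ yields a square in $\catE^{\rightarrow}$ from $j \htimes (\delta^k \htimes i)$ to $j \htimes 1_B$. First I would observe that $j \htimes 1_B = 1_{D \times B}$: this is immediate from the definition of the pushout-product, since the pushout of $C \times B \xleftarrow{=} C \times B \xrightarrow{j \times B} D \times B$ collapses to $D \times B$, and the comparison to the codomain is the identity. Combined with the standard associativity and symmetry isomorphisms for $\htimes$ (inherited from the symmetric monoidal structure of $(\catE, \times)$), this yields a canonical isomorphism
\[
j \htimes (\delta^k \htimes i) \;\cong\; \delta^k \htimes (j \htimes i),
\]
so the source and target of $j \htimes \squish_k(i)$ agree, up to coherent isomorphism, with those of $\squish_k(j \htimes i)$.

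The main content is to verify that, under these identifications, the top arrow of $j \htimes \squish_k(i)$ coincides with $\squi_k(j \htimes i)$. For this, recall that $\squi_k(i) \co B +_A (I \times A) \to B$ is the unique arrow out of the pushout $B +_A (I \times A)$ determined by the cocone $\bigl(1_B,\; i \circ (\epsilon \times A)\bigr)$. Since $\times$ preserves colimits separately in each variable in the Grothendieck topos~$\catE$, the functor $j \htimes -$ sends the defining pushout of $\delta^k \htimes i$ to the pushout defining $\delta^k \htimes (j \htimes i)$. Moreover, under this correspondence, the cocone $\bigl(1_B,\; i \circ (\epsilon \times A)\bigr)$ is transported to $\bigl(1_{D \times B},\; (j \htimes i) \circ (\epsilon \times \dom(j \htimes i))\bigr)$, using both the calculation $j \htimes 1_B = 1_{D \times B}$ and the naturality of $\epsilon \times (-)$. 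Uniqueness in the universal property of the pushout then forces the induced map to be $\squi_k(j \htimes i)$, as required.

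The main obstacle will be bookkeeping: iterated pushouts and the associativity/symmetry coherences of $\htimes$ have to be tracked carefully, and one must verify that the coherence isomorphism $j \htimes (\delta^k \htimes i) \cong \delta^k \htimes (j \htimes i)$ intertwines the two squash maps. Once one commits to the universal-property calculation and invokes cocontinuity of $\times$ in each argument, the identification of the two squares reduces to the uniqueness clause of the pushout universal property together with the equation $j \htimes 1_B = 1_{D \times B}$ established at the outset.
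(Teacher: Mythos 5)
Your proof is correct and matches the paper's strategy: apply $j \htimes -$ to the squash square, use symmetry and associativity of the pushout-product to identify the left vertical as $\delta^k \htimes (j\htimes i)$ and the codomain arrow as $1_{D \times B}$, then argue the top arrow is $\squi_k(j\htimes i)$. The paper settles this last point more briskly — since the right-hand vertical is an identity and the image square commutes (being the functorial image of a commutative square), the top arrow is forced to equal $(\epsilon \times (D\times B)) \circ (\delta^k \htimes (j\htimes i)) = \squi_k(j\htimes i)$ — whereas you re-derive it from the uniqueness clause of the pushout universal property and track cocones, which is sound but does more work than necessary.
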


\begin{proof}
If we apply $(j \htimes -) \co \catE^{\rightarrow} \rightarrow \catE^{\rightarrow}$ to the k-squash square of $i \co A \mono B$, using that the pushout-product is symmetric and associative, we get the following square:
\[
\xymatrix{
\dom(\delta^k \htimes (j \htimes i)) \ar[d]_{\delta^k \htimes (j \htimes i)} \ar[rr]^{\Theta} && D \times B \ar@{=}[d] \\
I \times (D \times B) \ar[rr]_{\epsilon \times (D \times B)} && D \times B
}
\]
where we only need to verify that the top horizontal arrow $\Theta$ is the squash morphism, that is, we need to verify that $\Theta = \squi_k(j \htimes i) \co \dom(\delta^k \htimes (j \htimes i)) \rightarrow D \times B$, but this follows since the diagram commutes.
\end{proof}

We now proceed to construct the arrow category $\nufib \rightarrow \catE^{\rightarrow}$ that will generate the category of normal uniform fibrations. We do this a follows. First let us denote by $\icat{I}$ the `walking arrow', that is the poset with two objects $0 < 1$ considered as a category, this has the structure of an interval object in $\scat{Cat}$ and we denote the inclusions by: \[\xymatrix{{*} \ar@<-.5ex>[r]_{\iota^1} \ar@<.5ex>[r]^{\iota^0} & \icat{I}}\]

We define $\nufib := \icat{I} \times \ufib$, where $\ufib$ is the generating category of uniform fibrations. The functor down to $\catE^{\rightarrow}$ is determined by the following two properties.
\begin{enumerate}
\item The following diagram commutes:
\[
\xymatrix{
\ufib \ar[dr]_{\uufib} & \nufib \ar[l]_{\rho_0} \ar[d]^{} \ar[r]^{\rho_1}& \ufib \ar[dl]^{\epsilon_{cod}}\\
&\catE^{\rightarrow}&
}
\]
where the map $\epsilon_{cod} \co \ufib \rightarrow \catE^{\rightarrow}$ sends an object $i \in \ufib$ to the identity arrow on the codomain of $i$. 

\item \label{item:2} For $k\in\{0,1\}$ and for each $i \co A \mono \ocat{A}$ in $\lcat{I}$, the functor $\unufib$ takes the arrow in $\icat{I} \times \ufib$ of the form $\icat{I} \times i \co \{0\} \times i \rightarrow \{1\} \times i$, to the $k$-squash square of $i$; \emph{i.e.}~$\squish_k(i) \co \delta^k \htimes i \rightarrow 1_{\ocat{A}}$.
\end{enumerate}
In other words, $\nufib \rightarrow \catE^{\rightarrow}$ is a natural transformation:
$
\uufib \rightarrow \epsilon_{cod} \co \ufib \rightarrow \catE^{\rightarrow}
$
whose components are the $k$-squash squares. 

We define $\category{NrmUniFib} \rightarrow \catE^{\rightarrow}$ to be the category of arrows of right $\nufib$-maps in $\catE$, and we call its objects {\em normal uniform fibrations}. Using Garner's small object argument \cite[Theorem~4.4]{Garner_2009} along the lines of \cref{lemma:awfssmall} and \cref{remark:strange}, we obtain the following result.

\begin{theorem} \label{thm:NUF}
There is an algebraically-free awfs on the category of arrows $\nufib \rightarrow \catE^{\rightarrow}$, denoted by $(NC_t, NF)$, whose category of $NF$-algebras is that of normal uniform fibrations. \qed
\end{theorem}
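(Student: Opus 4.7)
The plan is to mimic the strategy used in \cref{lemma:awfssmall} and \cref{remark:strange} for uniform fibrations: reduce to a small generating subcategory of arrows via density, apply Garner's small object argument, and then identify the resulting category of algebras with normal uniform fibrations.

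First I would fix a small dense subcategory $\catA \subset \catE$ (for instance the full subcategory of $\kappa$-compact objects for $\kappa$ sufficiently large) and, following the notation of \cref{lemma:awfssmall}, let $\Ai$ denote the full subcategory of $\Em$ spanned by the arrows with codomain in $\catA$ and $\Ai_{\htimes}$ the corresponding subcategory of $\Em_{\htimes}$. I then consider the small category of arrows $\icat{I} \times \Ai_{\htimes}$, equipped with the evident restriction of the functor $\unufib \co \nufib \rightarrow \catE^{\rightarrow}$ (so that its value on each morphism of the form $\icat{I} \times i$ is the appropriate $k$-squash square of $i$).

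The crucial intermediate step is to show that the inclusion $\icat{I} \times \Ai_{\htimes} \hookrightarrow \nufib$ induces an isomorphism of right orthogonal categories of arrows over $\catE^{\rightarrow}$. The argument extends that of \cref{lemma:awfssmall}(2): for any $j \in \Em$, density furnishes a canonical colimit presentation $j \cong \colim_{k \co \ocat{A} \to B} (\Delta_k j)$ with $\Delta_k j \in \Ai$; since $\delta^k \htimes (-)$ is cocontinuous one obtains a compatible decomposition $\delta^k \htimes j \cong \colim_{k} \delta^k \htimes (\Delta_k j)$, and using \cref{lem:squish} together with the cocontinuity of the pushout and product bifunctors, the squash square $\squish_k(j)$ is in turn the colimit of the squash squares $\squish_k(\Delta_k j)$. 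Consequently, a right lifting operation defined on $\icat{I} \times \Ai_{\htimes}$ extends uniquely and functorially to one on $\nufib$, with the restriction providing the inverse.

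Granted this reduction, Garner's small object argument applied to $\icat{I} \times \Ai_{\htimes}$ produces an algebraically-free awfs $(NC_t, NF)$ over it; transporting along the isomorphism above yields an awfs algebraically-free over $\nufib$ itself. Unwinding the definition of right $\nufib$-map, an $NF$-algebra is precisely an arrow equipped with a lifting operation against every $\delta^k \htimes j$ (with $j \in \Em$) that is moreover compatible with the squash squares, i.e.~the structure of a normal uniform fibration. The only nontrivial point is verifying that the colimit-extension argument is compatible with the extra squash coherence data, which is exactly what \cref{lem:squish} is designed to handle; the rest of the proof is essentially formal.
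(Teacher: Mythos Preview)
Your proposal is correct and follows exactly the route the paper gestures at: the paper's ``proof'' is the single sentence ``Using Garner's small object argument along the lines of \cref{lemma:awfssmall} and \cref{remark:strange}'', and you have unpacked precisely that, passing to the small generating subcategory $\icat{I}\times\Ai_{\htimes}$ and checking that the right-orthogonality class is unchanged. One small remark: the appeal to \cref{lem:squish} is not quite on target, since that lemma concerns the interaction of squash squares with the pushout-product rather than with colimit decompositions; what you actually need is the more elementary observation that the assignment $j \mapsto \squish_k(j)$ is built from cocontinuous functors (products, pushouts, identities) and hence sends the colimit presentation $j \cong \colim_l \Delta_l j$ to a colimit of squash squares, which is immediate.
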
 

Let us observe that the forgetful functor into $\catE^{\rightarrow}$ factors through the category of uniform fibrations, i.e. we have a commutative diagram:
\[
\xymatrix{
\category{NrmUniFib} \ar[rr]^U \ar[dr] && \category{UniFib} \ar[dl] \\
&\catE^{\rightarrow}\, .&
}
\] 
Moreover, we can prove the following lemma.
\begin{lemma} \label{lem:Uff}
The forgetful functor $U\co \category{NrmUniFib} \rightarrow \category{UniFib}$ is fully faithful.
\end{lemma}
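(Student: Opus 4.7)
The plan is to handle faithfulness and fullness separately, exploiting the decomposition $\nufib = \icat{I} \times \ufib$ of the generating category of arrows for normal uniform fibrations.

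Faithfulness is immediate: a morphism in either $\category{NrmUniFib}$ or $\category{UniFib}$ is simply a morphism in $\catE^{\rightarrow}$ satisfying certain naturality conditions indexed by the objects of the respective generating category, with no additional data beyond the underlying morphism. Since the forgetful functors to $\catE^{\rightarrow}$ are faithful in both cases and $U$ commutes with them, $U$ is faithful.

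For fullness, I would take right $\nufib$-maps $(f, \phi)$ and $(f', \phi')$ together with a morphism $(\alpha, \beta) \co U(f, \phi) \to U(f', \phi')$ of their underlying right $\ufib$-maps, and verify that $(\alpha, \beta)$ already satisfies the naturality conditions required of a morphism of right $\nufib$-maps. The conditions indexed by objects $(0, i) \in \nufib$ coincide exactly with those of a morphism of right $\ufib$-maps, and so hold by hypothesis. The extra conditions are those indexed by objects of the form $(1, i) \in \nufib$; by construction, $\unufib$ sends each such $(1, i)$ to an identity morphism $1_B$ in $\catE^{\rightarrow}$, where $B$ is the codomain of $i$.

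The key observation is that a filler for a square $(l, m) \co 1_B \to f$ is uniquely determined: the upper-triangle equation $\gamma \cdot 1_B = l$ forces $\gamma = l$, while the lower triangle is automatic from the commutativity of the square. Consequently $\phi((1, i))(l, m) = l$ and $\phi'((1, i))(\alpha \cdot l, \beta \cdot m) = \alpha \cdot l$, so the naturality identity $\phi'((1, i))(\alpha \cdot l, \beta \cdot m) = \alpha \cdot \phi((1, i))(l, m)$ reduces to $\alpha \cdot l = \alpha \cdot l$. I do not expect a real obstacle in carrying this out; the conceptual content is simply that the data added by $\nufib$ over $\ufib$ corresponds to lifts against identities in $\catE^{\rightarrow}$, and such lifts are uniquely determined and therefore automatically preserved by any morphism in $\catE^{\rightarrow}$.
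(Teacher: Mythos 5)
Your argument is correct and is essentially the same as the paper's, just spelled out in more detail: the paper also observes that the only new vertical arrows introduced by passing from $\ufib$ to $\nufib$ are identity maps, that lifts against identities are uniquely determined, and that consequently a normal uniform fibration structure carries no new filler data beyond its underlying uniform fibration structure (only new coherence properties), so morphisms of underlying right $\ufib$-maps are automatically morphisms of right $\nufib$-maps. Your treatment of fullness, identifying the forced filler $\gamma = l$ against $1_B$ and checking the naturality triangle reduces to a tautology, is a careful elaboration of the same observation.
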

\begin{proof}
This follows intuitively by noticing that the structure of a normal uniform fibration does not add any new lifting problems to that of a uniform fibrations; this is because the only new vertical arrows we are adding are identities and every morphism has a unique lift against them. Concretely, if $(f, \phi) \in \category{NrmUniFib}$ and if $(f, \theta) \in \category{UniFib}$, then both lifting structures $\phi$ and $\theta$ produce lifts against the exactly the same squares, the difference is that $\phi$ may have additional coherence properties. 
\end{proof}

In the following proposition we characterise those uniform fibration structures that are normal. We use the following terminology: we say that a morphism $\theta \co I \times B \rightarrow X$ is \emph{degenerate in the lifting direction} if it factors through the projection $\rho_1 \co I \times B \rightarrow B$ via some arrow $\theta^* \co B \rightarrow X$; we call $\theta^*$ the \emph{lifting degeneracy} of $b$.

\begin{proposition} \label{prop:nufib}
Let $(f, \theta) \in \category{UniFib}$ then the following are equivalent:
\begin{enumerate}
\item $(f, \theta)$ is a normal uniform fibration.
\item For any generating monomorphism $i \co A \mono \ocat{A}$ in $\lcat{I}$ and for any square:
\[
\xymatrix{
\ocat{A} +_A (I \times A) \ar[rr]^{a} \ar[d]_{\delta^{k} \htimes i} && X \ar[d]^{f} \\
I \times \ocat{A} \ar@{.>}[urr]|{\theta_{i}(a,b)} \ar[rr]_b &&Y \rlap{,}
}
\]
if the square factors through the squash square of $i$ as $\xymatrix{\delta^k \htimes i \ar[r]^(0.55){\squish_k(i)} & 1_{\ocat{A}} \ar[r]^{(a^* , b^*)} & f}$, then the lift $\theta_{i}(a,b)$ is degenerate in the lifting direction with $a^*$ as lifting degeneracy.
\item For any generating monomorphism $i \co A \mono B$ in $\Em$ and for any square:
\[
\xymatrix{
B +_{A} (I \times A) \ar[rr]^{a} \ar[d]_{\delta^{k} \htimes i} && X \ar[d]^{f} \\
I \times B \ar@{.>}[urr]|{\theta_{i}(a,b)} \ar[rr]_b &&Y
}
\]
if the square factors through the squash square of $i$ as $\xymatrix{\delta^k \htimes i \ar[r]^(0.55){\squish_k(i)} & 1_{B} \ar[r]^{(a^* , b^*)} & f}$, then the lift $\theta_{i}(a,b)$ is degenerate in the lifting direction with $a^*$ as lifting degeneracy.
\end{enumerate}
\end{proposition}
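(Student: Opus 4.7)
The plan is to establish $(1) \Leftrightarrow (2)$ directly from the definition of $\nufib$, to note that $(3) \Rightarrow (2)$ is immediate from the inclusion $\lcat{I} \subseteq \Em$, and to deduce $(2) \Rightarrow (3)$ by a density argument analogous to the one used in \cref{lemma:awfssmall}.

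For $(1) \Leftrightarrow (2)$, I would unfold what it means for the uniform-fibration structure $\theta$ to respect the additional morphisms in $\nufib = \icat{I} \times \ufib$ as compared to $\ufib$. By construction of $\unufib$, the only such new morphisms are the arrows $\iota \times (\delta^\varepsilon \htimes i) \co (0, \delta^\varepsilon \htimes i) \to (1, \delta^\varepsilon \htimes i)$ for $i \in \lcat{I}$ and $\varepsilon \in \{0,1\}$, and these are sent to the squash squares $\squish_\varepsilon(i) \co \delta^\varepsilon \htimes i \to 1_{\ocat{A}}$. A lifting structure for a right $\nufib$-map on $f$ must therefore cohere with these particular morphisms. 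Now any square $(a^*, b^*) \co 1_{\ocat{A}} \to f$ in $\catE^{\rightarrow}$ admits a unique filler, namely $a^*$ itself (since the upper triangle forces it). Naturality of $\theta$ with respect to $\squish_\varepsilon(i)$ therefore demands precisely that, whenever $(a,b) \co \delta^\varepsilon \htimes i \to f$ factors as $(a^*, b^*) \circ \squish_\varepsilon(i)$, the filler $\theta_i(a,b)$ equal $a^* \circ (\epsilon \times \ocat{A})$, which is exactly the statement of (2).

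For $(2) \Rightarrow (3)$, fix $j \co A \mono B$ in $\Em$ with $B \notin \catA$ and a square $(a,b) \co \delta^\varepsilon \htimes j \to f$ that factors through $\squish_\varepsilon(j)$ via $(a^*, b^*) \co 1_B \to f$. By density, write $B \cong \colim_{k \co \ocat{A} \to B} \ocat{A}$ with $\ocat{A} \in \catA$, so that $j \cong \colim_k (\Delta_k j)$ and each $\Delta_k j$ lies in $\Ai = \lcat{I}$. Because $\delta^\varepsilon \htimes (-)$ is cocontinuous, $\delta^\varepsilon \htimes j \cong \colim_k (\delta^\varepsilon \htimes \Delta_k j)$, and by naturality of the squash-square construction the squash square $\squish_\varepsilon(j)$ is itself the colimit of the squares $\squish_\varepsilon(\Delta_k j) \co \delta^\varepsilon \htimes \Delta_k j \to 1_{\ocat{A}}$ in $\catE^{\rightarrow}$. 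Restricting along each coprojection yields squares $(a_k, b_k) \co \delta^\varepsilon \htimes \Delta_k j \to f$ factoring through $\squish_\varepsilon(\Delta_k j)$ via $(a^* \circ k, b^* \circ k)$. Applying (2) to each piece gives $\theta_{\Delta_k j}(a_k, b_k) = (a^* \circ k) \circ (\epsilon \times \ocat{A})$. Finally, by the extension argument in the proof of \cref{lemma:awfssmall}(2), the filler $\theta_j(a,b)$ is the unique map into the colimit $I \times B \cong \colim_k (I \times \ocat{A})$ determined by these local fillers; assembling them produces exactly $a^* \circ (\epsilon \times B)$, so $(3)$ holds.

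I expect the main technical obstacle to be the second step in the chain above, namely the verification that restriction along the coprojections sends the given factorization $(a,b) = (a^*, b^*) \circ \squish_\varepsilon(j)$ to the expected factorization on each piece, and dually that gluing degenerate fillers produces a filler that is still degenerate in the lifting direction. Both hinge on the cocontinuity of $I \times (-)$ (which holds in the topos $\catE$ since it is a left adjoint) and on the Beck-style compatibility of pullbacks with colimits in a Grothendieck topos, exactly as exploited in \cref{lemma:awfssmall}.
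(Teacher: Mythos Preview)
Your proposal is correct and follows essentially the same approach as the paper: the equivalence $(1)\Leftrightarrow(2)$ is unwound directly from the construction of $\nufib$ (the only genuinely new coherence constraints beyond those of $\ufib$ are the squash squares, and fillers against identities are forced), $(3)\Rightarrow(2)$ is immediate, and $(2)\Rightarrow(3)$ is the density argument of \cref{lemma:awfssmall} applied componentwise. One small slip: in your final step the filler $\theta_j(a,b)$ is a map \emph{out of} the colimit $I\times B \cong \colim_k (I\times\ocat{A})$, not into it, but your argument is otherwise exactly the one the paper gives.
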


\begin{proof}
Let us first assume that $(f, \theta)$ is a normal uniform fibration. It is easy to see that item (2) holds, for this consider the diagram:
\[
\xymatrix{
\ocat{A} +_A (I \times A) \ar[d]_{\delta^{k} \htimes i}  \ar[r]^(0.67){\squi_k(i)}  & \ocat{A} \ar@{=}[d] \ar[rr]^{a^*}  && X \ar[d]^{f} \\
I \times \ocat{A} \ar@{.>}[urrr]|(0.25){\theta} \ar[r]_{\rho_1} & \ocat{A} \ar@{.>}[urr]|{a^*} \ar[rr]_{b*} && Y
}
\]
it is clear that the lifts cohere because the left square is by definition a morphism in (the image of) $\nufib \rightarrow \catE^{\rightarrow}$.

It is also easy to see that (2) implies (1), this follows since the uniform fibration structure $\theta$ already provides lifts against all lifting problems coming form $\nufib$, moreover, the lifts will also cohere with all the squares coming from $\uufib \co \ufib \rightarrow \catE^{\rightarrow}$. So we only need to verify that it coheres with the squash squares, but these squares are precisely those as in the hypothesis of item (2). 

It is clear that (3) implies (2). For the converse let us first observe, using that colimits in 
$\catE$ are universal, that any monomorphism $i \co A \mono B$, is the colimit over the generalised elements with domain on the dense subcategory $\catA$ used to define the category of arrows~$\Ai$ (see \cref{remark:strange}); that is, 
\[
i \cong \colim_{\substack{x\co \ocat{A} \rightarrow B \\ \ocat{A} \in \catA}} x^*(i) \rlap{,}
\] 
where for each $x \co \ocat{A} \rightarrow B$ we denote by $x^*(i)$ the pullback of $i$ along $x$. Now, since $\delta^k \htimes - \co \catE^{\rightarrow} \rightarrow \catE^{\rightarrow}$ is cocontinuous, we have that:
\[
\colim_{\substack{x\co \ocat{A} \rightarrow B \\ \ocat{A} \in \catA}} (\delta^k \htimes (x^*(i))) \cong \delta^k \htimes \colim_{\substack{x\co \ocat{A} \rightarrow B \\ \ocat{A} \in \catA}} x^*(i)  \cong \delta^k \htimes i
\]  

Let us suppose that (2) holds, and that we have a diagram as in item (3). Then for each generalised element $x \co \ocat{A} \rightarrow B$ with $\ocat{A} \in \catA$, we have a square:
\[
\xymatrix{
\ocat{A} +_{x^*(A)} (I \times x^*(A)) \ar[d]_{\delta^k \htimes x^*(i)} \ar[r]^(0.60){\iota_x} & B +_{A} (I \times A) \ar[rr]^{a} \ar[d]|(0.30){\delta^{k} \htimes i} && X \ar[d]^{f} \\
I \times \ocat{A} \ar[r]_{I \times x} \ar@{.>}[urrr]|(0.25){\theta_{x*(i)}} &I \times B \ar@{.>}[urr]|{\theta_i} \ar[rr]_b &&Y
}
\]
where the left square is the colimit inclusion corresponding to $x \co \ocat{A} \rightarrow B$. The commutation of the respective triangle is obtained by the universal property of the colimit. 

Finally, if the square on the right factors through a squash square 
\[
\xymatrix{\delta^k \htimes i \ar[rr]^(0.55){\squish_k(i)} && 1_{B} \ar[rr]^{(a^* , b^*)} && f}
\]
then (by naturality) the outer square also factor through a squash square and thus the lift $\theta_{x^*(i)}$ is degenerate with $a^* \iota_x $ as lifting degeneracy. This implies by the uniqueness of the universal property, that also $\theta_i$ is degenerate with $a^*$ as lifting degeneracy. 
\end{proof}

\begin{remark} \label{rem:nufiso}
To guide our intuition towards normal uniform fibrations, we can compare the notions of normality for cloven isofibrations in groupoids and for uniform fibrations in simplicial sets. For this, we consider the awfs of (normal) uniform fibrations on simplicial sets constructed from the suitable topos structure consisting of the $1$-simplex as the interval object and the class $\Em_{all}$ of all monomorphisms as the class of generating monomorphisms.
It is not hard to show that the following are pullback squares:
\[
\xymatrix{
\category{NrmFib} \ar[r]^{\tilde{N}} \ar[d] \pbcorner & \category{NrmUniFib} \ar[d] \\
\category{ClFib} \ar[r]^{\tilde{N}} \ar[d] \pbcorner & \category{UniFib} \ar[d] \\
\scat{Grd}^{\rightarrow} \ar[r]_{N} & \sSet^{\rightarrow} \rlap{.}
}
\]
Here, the categories $\category{ClFib}$ and $\category{NrmFib}$ are those of cloven isofibrations and normal cloven isofibrations in groupoids while the horizontal arrows are given by the nerve functor and its respective lifts.
This shows how the notion of uniform fibration (respectively normal uniform fibration) is a generalisation to higher dimensions of the notion of cloven isofibration (respectively normal cloven isofirations). 
\end{remark}

The category of arrows of \emph{normal trivial cofibrations} is defined to be the category of $NC_t$-maps with respect to the awfs of normal uniform fibrations \cref{thm:NUF}. Alternatively, it is the left orthogonal category of arrows of $\category{NrmUniFib}$. We will denote it by $\category{NrmTrivCof}$.
Even if we do not know a complete characterisation of normal trivial cofibrations, we have is a general method for constructing normal trivial cofibrations from a structure that is easier to handle. For this, let us recall the categories $\category{SDR}$ of strong deformation retractions and $\category{SE}$ of strong homotopy equivalences from \cref{sect:UF} defined immediately before \cref{claim:1}. In the next proposition, we observe that every strong deformation retract has (uniformly) the structure of a normal trivial cofibration. Normality is an essential ingredient in the proof, in particular, a similar result would not hold for uniform fibrations. 

\begin{proposition} \label{thm:SDRNTC}
There is a functor from the category strong deformation retracts $\category{SDR}$ to that of normal trivial cofibrations \category{NrmTrivCof},
\[
\xymatrix{
\category{SDR} \ar[rr]^{\Psi} && \category{NrmTrivCof} \rlap{.}
}
\]
\end{proposition}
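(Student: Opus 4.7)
Given a $k$-oriented SDR $(g \co A \to B, r, h)$, the plan is to factor $g$ through the awfs $(NC_t, NF)$ as $g = NF(g) \circ NC_t(g)$, with $NC_t(g) \co A \to Qg$ and $NF(g) \co Qg \to B$, and to construct an $NC_t$-map structure on $g$ from this factorisation. Such a structure amounts to a section $\lambda \co B \to Qg$ of $NF(g)$ satisfying $\lambda \circ g = NC_t(g)$, so the task reduces to producing $\lambda$ from the SDR data by solving an appropriate lifting problem against the normal uniform fibration $NF(g)$, and then verifying naturality.

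As a first attempt, set $\lambda_0 := NC_t(g) \circ r$. Then $\lambda_0 \circ g = NC_t(g)$ but $NF(g) \circ \lambda_0 = g \circ r$, which need not equal $1_B$. To correct this, I would use $h$ to pose the lifting problem
\[
\xymatrix{
B \ar[r]^{\lambda_0} \ar[d]_{\delta^k \times B} & Qg \ar[d]^{NF(g)} \\
I \times B \ar[r]_{h} & B \rlap{.}
}
\]
This is admissible against $NF(g)$ because $\delta^k \times B = \delta^k \htimes (\emptyset \to B)$, and by axiom (M2) of \cref{defn:suittops} the initial map $\emptyset \to B$ lies in $\Em$, so $\delta^k \times B \in \Em_{\htimes}$. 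The uniform fibration structure underlying $NF(g)$ supplies a canonical lift $\tilde{h} \co I \times B \to Qg$, and I set $\lambda := \tilde{h} \circ (\delta^{1-k} \times B)$. The endpoint condition on $h$ immediately gives $NF(g) \circ \lambda = 1_B$.

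The subtle step is verifying $\lambda \circ g = NC_t(g)$, which reduces to proving $\tilde{h} \circ (I \times g) = NC_t(g) \circ (\epsilon \times A)$, i.e., that $\tilde{h}$ is degenerate in the lifting direction along the image of $g$. This is the point at which normality is essential. The Cartesian square $\sigma \co (\emptyset \to A) \to (\emptyset \to B)$ in $\Em$ induced by $g$ gives, via $\delta^k \htimes (-)$, a morphism $\delta^k \times A \to \delta^k \times B$ in $\Em_{\htimes}$; restricting the original lifting problem along it yields a problem on $\delta^k \times A$ whose upper datum is $NC_t(g)$ and, by the strongness condition $h \circ (I \times g) = g \circ (\epsilon \times A)$, whose lower datum is $g \circ (\epsilon \times A)$. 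This restricted problem factors through the squash square $\squish_k(\emptyset \to A)$ with structure map $(NC_t(g), g) \co 1_A \to NF(g)$, so \cref{prop:nufib} forces its canonical lift to be the degenerate map $NC_t(g) \circ (\epsilon \times A)$. Naturality of the uniform fibration lifts along morphisms in $\Em_{\htimes}$ identifies this restricted lift with $\tilde{h} \circ (I \times g)$, completing the verification. Functoriality of $\Psi$ then follows from functoriality of the factorisation and naturality of the canonical lifts under morphisms of SDRs. The main obstacle is precisely the degeneracy step above: it cannot be deduced from bare uniformity and is exactly what the squash-square coherence of a normal uniform fibration is designed to supply.
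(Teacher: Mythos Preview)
Your proof is correct, and the key technical step---using the squash square of $\emptyset \to A$ together with the normality condition of \cref{prop:nufib} to force the lift to be degenerate along the image of $g$---is exactly the same as in the paper. The difference lies in the framing. The paper works directly in the left-orthogonal description of $\category{NrmTrivCof}$: for an arbitrary normal uniform fibration $(f,\phi)$ and square $(a,b)\co g \to f$, it builds the filler $\Psi h_f$ by solving the analogous lifting problem against $f$ and restricting to the far endpoint. You instead work in the $NC_t$-map description: you solve a single lifting problem against the free normal uniform fibration $NF(g)$ to produce the coalgebra section $\lambda$, and then appeal to the isomorphism $NC_t\text{-}\mathbf{Map} \cong {^\boxslash}(\category{NrmUniFib})$ from \cref{prop:liftings}. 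Your route is slightly more economical (one lifting problem rather than a family of them) and makes the dependence on the awfs machinery explicit; the paper's route is more self-contained and makes the resulting lifting operation visible without unwinding the coalgebra-to-lifting translation. For functoriality, note that your argument does require checking that a morphism of SDRs induces a morphism in $\Em_{\htimes}$ on the generating side (it does, since $(\emptyset \to B') \to (\emptyset \to B)$ is always Cartesian in a topos) and a morphism of $NF$-algebras on the free-algebra side, but both are routine.
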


\begin{proof}
 Let $(g, r, h) \in \category{SDR}$ which we assume to be $0$-oriented (the other case being analogous). We have to define $\Psi(g,r,h) := (g, \Psi h)$ with $\Psi h$ a left $\category{NrmTrivCof}$-map structure for $g$. To do this, let us consider a normal uniform fibration $(f, \phi)$ and a square $(a, b) \co g \rightarrow f$ for which we will construct a lift $\Psi h_f \co B \rightarrow X$ as shown:
\[
\xymatrix{
A \ar[d]_g \ar[r]^a & X \ar[d]^f  \\
B \ar[r]_b \ar@{.>}[ur]|{\Psi h_f} & Y
}
\] 
We first consider the lift $H \co I \times B \rightarrow X$, in the following square (which commutes because the deformation retraction is $0$-oriented), produced by the normal uniform fibration structure of $f$:
\[
\xymatrix{
B \ar[d]_{\delta^0 \times B} \ar[r]^r & A \ar[r]^a & X \ar[d]^f \\
I \times B \ar@{.>}[urr]|H \ar[r]_h & B \ar[r]_b & Y \rlap{.}
}
\]
We define $\Psi h_f \co= H \cdot (\delta^1 \times B)$. That is, the lift $\Psi h_f$ is defined to be $H$ on restricted to the top of the cylinder $I \times B$.  The verification that $f \circ \Psi h_f = b$ is straightforward. 

We now need to check that $\Psi h_f \cdot g = a$, for this we first observe the following diagram:
\[
\xymatrix{
A \ar[r]^g \ar[d]_{\delta^0 \times A} & B \ar[d]|(0.30){\delta^0 \times B} \ar[r]^r & A \ar[r]^a & X \ar[d]^f \\
I \times A \ar[r]_{I \times g} \ar@{.>}[urrr]|(0.25){H_0} & I \times B \ar@{.>}[urr]|H \ar[r]_h & B \ar[r]_b & Y
}
\]
here, the lift $H_0$ is also defined by the uniform fibration structure of $f$, and moreover the triangle created by the lifts commute, since the square on the left is a morphism of left~$\category{UniFib}$-maps. 

We use that $rg = 1_a$ and the strength of the homotopy retraction tuple $(g, r, h)$, to replace the horizontal arrows in the previous diagram in order to obtain the following one:
\[
\xymatrix{
A \ar@{=}[r] \ar[d]_{\delta^0 \times A} & A \ar[rr]^a \ar@{=}[d] && X \ar[d]^f \\
I \times A \ar[r]_{\rho_1} \ar@{.>}[urrr]|(0.25){H_0} & A \ar@{.>}[urr]|a \ar[r]_g & B \ar[r]_b & Y
}
\]
where the lifts cohere by \cref{prop:nufib} using the squash square of~$\bot_A \co \emptyset \rightarrow A$. With this in place,
\begin{flalign*} 
 && \Psi h_f \cdot g &= H \cdot (\delta^1 \times B) \cdot g && \text{(by defn of $\Phi h_f$)} \\
 && 			       &= H \cdot (I \times g) \cdot (\delta^1 \times A) && \text{(by naturality of $\delta^1 \times -$)} \\
 &&			      &= H_0 \cdot (\delta^1 \times A) && \text{(by construction of $H_0$)} \\
 && 			     &= a \cdot \rho_1 \cdot (\delta^1 \times A) && \text{(by normality of $(f, \phi)$)} \\
 &   &			     & =  a	\rlap{,} 
\end{flalign*} 
as required.
\end{proof}

\section{Type-theoretic awfs from normal uniform fibrations} \label{sect:TTNUF}

In order to equip the awfs $(NC_t, NF)$ of normal uniform fibrations with the structure of a type-theoretic awfs we require a functorial Frobenius structure and a stable functorial choice of path objects. In this section, we show how to construct these. 
We focus first with the construction of a stable functorial choice of path objects (sfpo for short), 
cf.~\cref{defn:SFPO}. We work in the context of a suitable topos $(\catE, I, \Em)$ that in addition satisfies hypothesis (M5) from \cref{theorem:main1}. 
Recall from the discussion preceding \cref{theorem:main1} that a suitable topos has a canonical stable and functorial factorisation of the diagonal, called $\lcat{P}_I$, which is constructed via exponentiation by the interval. Our objective is to lift this factorisation to a sfpo. That is, we need to exhibit a lift of $\lcat{P}_I$ as in
\[
\xymatrix{
\category{NrmUniFib} \ar[r]^(0.35){{\lcat{P}_I}} & \category{NrmTrivCof} \times_{\icat{C}} \category{NrmUniFib}  \rlap{.}
}
\]
We can split the problem in two. If we denote by $r, \rho \co \catE^{\rightarrow} \rightarrow \catE^{\rightarrow}$ the two legs of the sfpo (\emph{i.e.}~by composing $\lcat{P}_I$ with the two projections from the pullback), then it is sufficient to show that there are lifts of these functors as in the following diagram.
\[
\xymatrix{
\category{NrmUniFib}  \ar[r]^(0.50){{r}} & \category{NrmTrivCof}   &&
\category{NrmUniFib} \ar[r]^(0.50){{\rho}} & \category{NrmUniFib}  \, .
}
\]
The lift of $r \co \catE^{\rightarrow} \rightarrow \catE^{\rightarrow}$ can easily obtained by collecting some of the results established so far.

\begin{lemma} \label{lem:liftr}
There is a lift of the functor $r \co  \catE^{\rightarrow} \rightarrow \catE^{\rightarrow}$ as shown:
\[
\xymatrix{
\category{NrmUniFib}  \ar[r]^(0.50){{r}} & \category{NrmTrivCof}
}
\]
\end{lemma}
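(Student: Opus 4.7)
The plan is to obtain the desired lift by composing three functors that have already been constructed in the paper. First, a normal uniform fibration $(f, \phi)$ has an underlying uniform fibration structure, since by \cref{lem:Uff} there is a (fully faithful) forgetful functor $U \co \category{NrmUniFib} \to \category{UniFib}$; moreover, via the identification $\category{UniFib} \cong \Falg$ and the forgetful functor $\Falg \hookrightarrow \Fmaps$ (recall \cref{remark:strange1} and \cref{prop:baf}), we obtain a functor $\category{NrmUniFib} \to \Fmaps$ over $\catE^\rightarrow$.

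Second, I would apply \cref{claim:2}, which provides a lift $r \co \Fmaps \to \category{SDR}$ of the first leg of the factorisation $\lcat{P}_I$ to the category of strong deformation retracts. So composing with the functor from the previous paragraph, every normal uniform fibration $f$ gives rise to a canonical strong deformation retraction structure on the map $r_f \co B \to Pf$, functorially in $f$.

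Third, I would apply \cref{thm:SDRNTC}, which yields a functor $\Psi \co \category{SDR} \to \category{NrmTrivCof}$ over $\catE^\rightarrow$ that equips every strong deformation retract with the structure of a normal trivial cofibration. Composing all three steps produces a functor
\[
\xymatrix{
\category{NrmUniFib} \ar[r] & \Fmaps \ar[r]^{r} & \category{SDR} \ar[r]^(0.45){\Psi} & \category{NrmTrivCof}
}
\]
over $\catE^\rightarrow$, which is the required lift of $r$.

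There is no real obstacle: each of the three composed functors is already available, and each is a functor over $\catE^\rightarrow$, so their composite is automatically a lift of $r \co \catE^\rightarrow \to \catE^\rightarrow$. Conceptually the key point is that the normality requirement introduced in \cref{sect:NUF} interacts well with strong deformation retracts via \cref{thm:SDRNTC}, so that the strong deformation retract structure on $r_f$ extracted from a uniform fibration structure on $f$ (as in \cref{claim:2}) automatically upgrades to a normal trivial cofibration structure.
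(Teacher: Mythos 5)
Your proposal is correct and matches the paper's proof essentially step for step: forget from $\category{NrmUniFib}$ to the category of uniform fibrations, apply \cref{claim:2} to get a strong deformation retraction structure on $r_f$, then apply $\Psi$ from \cref{thm:SDRNTC} to upgrade that to a normal trivial cofibration structure. The only cosmetic difference is that you make explicit the intermediate passage $\category{UniFib}\cong\Falg\to\Fmaps$ before invoking \cref{claim:2}, which the paper's diagram leaves implicit.
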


\begin{proof}
We construct the desired lift as the following composite:
\[
\xymatrix{
\category{NrmUniFib} \ar[d] \ar[r] & \category{UniFib} \ar[r] \ar@{=}[d]  & \category{SDR} \ar[d] \ar[r]^(0.40){\Psi} & \category{NrmTrivCof} \ar[d]\\
\catE^{\rightarrow}  \ar@{=}[r] & \catE^{\rightarrow}  \ar[r]_r & \catE^{\rightarrow}  \ar@{=}[r]& \catE^{\rightarrow}  \, .
}
\]
where the lift in the leftmost square is the forgetful functor, that on the middle square comes from \cref{claim:2} and the lift in the rightmost square is the one from \cref{thm:SDRNTC}. 
\end{proof}

\begin{remark} \label{rem:follow-up-on-costructivity}
The proof of \cref{theorem:main1}, which shows that the functor $r \co \catE^{\rightarrow} \rightarrow \catE^{\rightarrow}$ lifts to the category of left maps of the awfs of uniform fibrations, relied crucially on the hypothesis (M6). This says that the image of $r$ lands on the class $\Em$ of generating monomorphisms of the suitable topos. As noted in \cref{note:nonconstruct}, hypothesis (M6) does not hold if we consider $\Em_{dec}$, the class of decidable monos in the context of a presheaf topos. However notice that the proof of \cref{lem:liftr} does not require hypothesis (M6). In other words, the extra `normality' condition on the category of uniform fibrations allows us to get rid of this requirement.
\end{remark}

The construction of the lift for the other functor $\rho \co \catE^{\rightarrow} \rightarrow \catE^{\rightarrow}$ is not quite as direct; we need to briefly recall the construction of the uniform fibration structure produced by \cref{claim:1}. 
Let us consider a map $f \co X \rightarrow Y$ in $\catE$; recall (from the discussion before \cref{theorem:main1}) that the second leg of the factorisation of the diagonal, $\rho_f \co Pf \rightarrow X \times_Y X$ can also be obtained as a pullback of the map $\hhom(i, f)$ where $i \co \partial I \rightarrow I $ stands for the inclusion of the boundary of the interval. 
Let us assume for now that $(f, \theta)$ is a uniform fibration. We know that right orthogonal categories of arrows are closed under pullbacks, thus to give a uniform fibration structure to $\rho_f$ it is sufficient to give one to $\hhom(i, f)$. Now, in order to construct a uniform fibration structure for $\hhom(i,f)$, let us consider a lifting problem with respect to a morphism of the generating category of arrows $\lcat{I}_{\hat{\times}}$ of uniform fibrations; \emph{i.e.}~a square of the form $(U, b) \co \delta^k \htimes j \rightarrow \hhom(i, f)$ where $j \co A \mono B$ is in $\lcat{I}$, for which we  show how to construct a lift. This is shown in the left side of the following diagram:
\[
\xymatrix{
B +_{A} (I \times A) \ar[d]_{\delta^k \htimes j} \ar[r]^(0.65)U & X^{I} \ar[d]^{\hhom(i, f)} &&
\dom(i \htimes (\delta^k \htimes i)) \ar[d]_{i \htimes (\delta^k \htimes j)} \ar[r]^(0.70){{U}} & X \ar[d]^f 
\\
I \times B \ar[r]_b \ar@{.>}[ru]|{\rho\theta_j}  & Y^{I} \times_{Y^{\partial I}} X^{\partial I} &&
I \times (I \times B) \ar[r]_(0.65){{b}} \ar@{.>}[ru]|{\rho\theta_j} & X \, .
}
\]

Transposing along the adjunction $(i \htimes - )  \dashv \hhom(i, -)$ we obtain a square as on the right of the previous diagram. We use that the pushout-product construction is symmetric and associative, and in particular we obtain that $i \htimes (\delta^k \htimes j) \cong \delta^k \htimes (i \htimes j)$. By hypotheis (M5) of the category of generating cofibrations $\lcat{M}$ we know that $i \htimes j$ is a generating monomorphism, thus we find a lift for the square on the right of the previous diagram, denoted by $\rho\theta_i$. By transposing everything back we obtain the desired lift for the original square. This construction produces a uniform fibration structure for $\hhom(i, f)$ which we denote by $\rho\theta$. 


\begin{lemma} \label{lem:liftp}
There is a lift of the functor from \cref{claim:1} as shown:
\[
\xymatrix{
\category{NrmUniFib} \ar[r]^{{\rho}} & \category{NrmUniFib} \, .
}
\] 
\end{lemma}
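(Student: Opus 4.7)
The plan is to upgrade the uniform fibration structure on $\rho_f$ constructed in \cref{claim:1} to a normal one, by reducing to the key intermediate object $\hhom(i, f)$ and using the preservation of squash squares under pushout-product established in \cref{lem:squish}.

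First I would observe that right $\nufib$-maps, like any right orthogonal category of arrows, are closed under pullback: a lifting problem against a pullback transports to one against the original map, and the coherence conditions imposed by the squash squares depend only on factorisations of lifting problems (which are preserved by pullback). Since $\rho_f$ is a pullback of $\hhom(i, f)$ along $\abra{\alpha_f, \lambda_f}$, it suffices to lift $\hhom(i, -) \co \catE^{\rightarrow} \to \catE^{\rightarrow}$ to a functor $\category{NrmUniFib} \to \category{NrmUniFib}$. In other words, I need to check that if $(f, \phi)$ is a normal uniform fibration, then the uniform fibration structure $\rho\theta$ on $\hhom(i,f)$ built by the transposition argument recalled above (via $(i \htimes -) \dashv \hhom(i, -)$, the isomorphism $i \htimes (\delta^k \htimes j) \cong \delta^k \htimes (i \htimes j)$, and hypothesis (M5)) is again normal.

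The verification will use the characterisation of normality in \cref{prop:nufib}(3). Consider a lifting problem $(U, b) \co \delta^k \htimes j \rightarrow \hhom(i, f)$ that factors through the squash square $\squish_k(j) \co \delta^k \htimes j \to 1_B$, say via $(U^*, b^*) \co 1_B \to \hhom(i, f)$. By naturality of the adjunction, the transposed square $(\tilde U, \tilde b) \co i \htimes (\delta^k \htimes j) \rightarrow f$ factors through $i \htimes \squish_k(j)$. Applying \cref{lem:squish} with the boundary inclusion $i \co \partial I \to I$ in place of $j$, we obtain $i \htimes \squish_k(j) \cong \squish_k(i \htimes j)$ under the canonical associativity/symmetry isomorphism $i \htimes (\delta^k \htimes j) \cong \delta^k \htimes (i \htimes j)$. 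Hence $(\tilde U, \tilde b)$ factors through the squash square $\squish_k(i \htimes j)$, where $i \htimes j \in \Em$ by hypothesis (M5). Since $(f, \phi)$ is normal, \cref{prop:nufib}(3) gives that the lift $\rho\theta_j$ of the transposed problem is degenerate in its lifting direction, with the appropriate lifting degeneracy obtained from $(U^*, b^*)$.

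Finally, I would transpose back: the adjunction swaps the two interval factors in the codomain, so the lifting direction (the $I$ coming from $\delta^k$) is preserved, and degeneracy of the transposed lift in that direction translates, via currying, to degeneracy of the original lift in the lifting direction of the source square $\delta^k \htimes j$, with lifting degeneracy equal to the mate of $U^*$, as required by \cref{prop:nufib}(3). Functoriality of this assignment is automatic, since every step (the adjunction transposition, the iso from \cref{lem:squish}, the functorial uniform fibration structure of $f$, and the pullback along $\abra{\alpha_f, \lambda_f}$) is natural in $f$. The main obstacle will be bookkeeping: carefully identifying which copy of $I$ plays the role of the lifting direction across the associativity/symmetry isomorphisms of the pushout-product, and checking that the lifting degeneracy produced on the $f$-side is indeed the transpose of $U^*$ rather than some other natural map; once this is nailed down, the argument reduces to a diagram chase using naturality and \cref{lem:squish}.
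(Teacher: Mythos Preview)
Your proposal is correct and follows essentially the same route as the paper: reduce via pullback closure of right $\nufib$-maps to showing that the uniform fibration structure on $\hhom(i,f)$ is normal, then verify this by transposing a degenerate lifting problem along $(i \htimes -) \dashv \hhom(i,-)$ and invoking \cref{lem:squish} to identify $i \htimes \squish_k(j)$ with $\squish_k(i \htimes j)$, so that normality of $(f,\phi)$ applies. The paper additionally cites \cref{lem:Uff} (full faithfulness of $\category{NrmUniFib} \to \category{UniFib}$) to handle functoriality cleanly, which is implicit in your final naturality remark.
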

\begin{proof}
Since the forgetful functor $\category{NrmUniFib} \rightarrow \category{UniFib}$ is fully faithful (\cref{lem:Uff}), and using that right ortogonal categories are closed under pullbacks; it is sufficient to prove that given $(f, \psi)$ a normal uniform fibration, the uniform fibration structure $\rho\psi$ of $\hhom(i, f)$, described in the foregoing discussion, is also normal. 

By \cref{prop:nufib}, we need to show that, for a generating monomorphism $j \co A \mono B$, the lifts in the diagram on the left of the following figure cohere:
\[
\xymatrix{
B +_{A} (I \times A) \ar[d]_{\delta^k \htimes j} \ar[r]^(0.70){\squi_k(j)} & B \ar[r]^{U^*} \ar@{=}[d] & X^{I} \ar[d]^{\hhom(i, f)} &
\dom(\delta^k \htimes (i \htimes j)) \ar[d]_{\delta^k \htimes (i \htimes j)} \ar@{{}{ }{}}@/^0.0pc/[r]^(0.63){\squi_k(i \htimes j)} \ar[r] & I \times B \ar[r]^(0.60){U^*} \ar@{=}[d] & X \ar[d]^f\\
I \times B \ar@{.>}[urr]|(0.30){\rho\theta_{j}} \ar[r]_(0.60){\epsilon \times B} & B \ar[r]_(0.30){b^*} \ar@{.>}[ur]|{U^*} & Y^{I} \times_{Y^{\partial I}} X^{\partial I} &
I \times (I \times B) \ar[r]_(0.60){\epsilon \times (I \times B)} \ar@{.>}[urr]|(0.40){\rho\theta_{j}}& I \times B \ar[r]_(0.65){b^*} \ar@{.>}[ur]|{U^*} & Y \, .
}
\]
by transposing the whole diagram along $ (i \htimes - ) \dashv  \hhom(i, -)$ and using the symmetry and associativity of the pushout-product, we obtain the lifting problem as on the right of the previous diagram, for which we need to show that the lifts cohere. The lift $\rho\theta_{j}$ on the left (on either diagram) is, by construction, the lift obtained from the uniform fibration structure $\rho\theta$ on $\hhom(i, f)$. The result follows by applying \cref{lem:squish}.
\end{proof}


\begin{proposition} \label{thm:PONUF}
Consider a suitable topos $(\catE, I, \Em)$ satisfying condition (M5). Then the stable functorial factorisation of the diagonal $\lcat{P}_I$ lifts to a stable functorial choice of path objects for the awfs of normal uniform fibrations; as shown in the following diagram:
\[
\xymatrix{
\category{NrmUniFib} \ar[d] \ar@{.>}[r]^(0.35){{\lcat{P}_I}} & \category{NrmTrivCof} \times_{\icat{C}} \category{NrmUniFib} \ar[d] \\
\catE^{\rightarrow} \ar[r]_(0.35){\lcat{P}_I} & \catE^{\rightarrow} \times_{\catE} \catE^{\rightarrow}  \, .
}
\]
\end{proposition}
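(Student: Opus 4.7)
The plan is to obtain the statement by simply pairing the two lifts already established in Lemma~\ref{lem:liftr} and Lemma~\ref{lem:liftp}. Recall from the discussion preceding Definition~\ref{defn:SFPO} that the factorisation $\lcat{P}_I$ is a \emph{stable} functorial factorisation of the diagonal on $\catE^\rightarrow$, so the hypothesis of Definition~\ref{defn:SFPO} concerning stability is already met at the underlying level; what the statement asks for is therefore only the existence of a functorial lift
\[
\lcat{P}_I \co \category{NrmUniFib} \to \category{NrmTrivCof} \times_{\catE^\rightarrow} \category{NrmUniFib}
\]
over the canonical map $\catE^\rightarrow \times_\catE \catE^\rightarrow \to \catE^\rightarrow \times_\catE \catE^\rightarrow$ given by the two projections composing to $\lcat{P}_I$.

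The plan is as follows. Given a normal uniform fibration $(f, \psi)$ with $f \co X \to Y$, Lemma~\ref{lem:liftr} produces canonically a normal trivial cofibration structure on the reflexivity map $r_f \co X \to Pf$, while Lemma~\ref{lem:liftp} produces canonically a normal uniform fibration structure on the projection $\rho_f \co Pf \to X \times_Y X$. Since both assignments are functorial in $(f, \psi)$ and, by construction, sit over the two legs of $\lcat{P}_I$ in $\catE^\rightarrow$ (their underlying arrows are $r_f$ and $\rho_f$, which compose to the diagonal $\delta_f$), the pair assembles into a single functor from $\category{NrmUniFib}$ to the pullback $\category{NrmTrivCof} \times_{\catE^\rightarrow} \category{NrmUniFib}$, as required. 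Commutativity of the square in the statement is immediate from the construction.

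The hard work has already been absorbed into the two preceding lemmas: Lemma~\ref{lem:liftr} was the easy direction, cobbled together from the strong deformation retract structure of Lemma~\ref{claim:2} composed with the functor $\Psi$ of Proposition~\ref{thm:SDRNTC}, while Lemma~\ref{lem:liftp} required the more delicate verification that the lift of $\rho$ produced by Lemma~\ref{claim:1} actually preserves normality, via Proposition~\ref{prop:nufib} and the compatibility of squash squares with the pushout-product (Lemma~\ref{lem:squish}). Consequently, no new obstacle arises at the present stage: the result is obtained by mere bookkeeping, and hypothesis (M6) from Theorem~\ref{theorem:main1} is dispensed with precisely because the normality condition allows Lemma~\ref{lem:liftr} to factor through the strong deformation retracts of Proposition~\ref{thm:SDRNTC} rather than appealing to $r_f \in \Em$ directly.
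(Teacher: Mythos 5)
Your proposal is correct and follows essentially the same route as the paper: the paper's own proof consists exactly of invoking \cref{lem:liftr} for the lift of $r$ and \cref{claim:1} together with \cref{lem:liftp} for the lift of $\rho$, and then pairing them. (One small slip: the fibre product is over $\catE$, not $\catE^{\rightarrow}$, since the two legs of the factorisation are matched by $\mathrm{cod}(r_f)=Pf=\mathrm{dom}(\rho_f)$; this is a notational point and does not affect the argument.)
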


\begin{proof}
This follows by applying \cref{lem:liftr} to lift the functor $r \co \catE^{\rightarrow} \rightarrow \catE^{\rightarrow}$ and by applying \cref{claim:1} and \cref{lem:liftp} to lift the functor $\rho \co \catE^{\rightarrow} \rightarrow \catE^{\rightarrow}$.
\end{proof}

We turn our attention to the proof that the category of arrows of normal uniform fibrations has a functorial Frobenius structure. The structure is given by adapting the functorial Frobenius structure on uniform fibrations constructed in \cite[Theorem~8.8]{Gambino_2017}. Throughout this section, we will work on an arbitrary suitable topos $(\catE, I, \Em)$.

\begin{lemma} \label{lem:squishpb}
Let $i \co A \mono B$ be a monomorphism, and let $f \co X \rightarrow B$ be any map. Then the following holds:
\begin{enumerate}
\item There is an isomorphism 
\[
\delta^k \htimes (f^*i) \cong (I \times f)^*(\delta^k \htimes i)  \, .
\]
\item Pulling back the $k$-squash square of $i$ along the square $(I \times f, f)$ produces the $k$-squash square of $f^*i$; concretely, for $k \in \{0,1\}$, there is an isomorphism:
\[
 \squish_k (f^*i) \cong (I \times f, f)^*(\squish_k (i))  \, .
\]
\end{enumerate}
\end{lemma}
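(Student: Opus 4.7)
The plan is to deduce both claims from the universality of colimits in the Grothendieck topos $\catE$, which in particular implies that pullback functors preserve pushouts.

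For~(1), recall that $\delta^k \htimes (f^*i)$ is the canonical map out of the pushout $X +_{f^*A} (I \times f^*A)$ formed from $f^*i \co f^*A \to X$ and $\delta^k \times f^*A \co f^*A \to I \times f^*A$. I would compute the pullback along $I \times f \co I \times X \to I \times B$ of the pushout $B +_A (I \times A)$ componentwise, relying on the fact that $(I\times f)^*$ preserves colimits since it has a right adjoint. Specifically, the pullback of $\delta^k \times B \co B \to I \times B$ along $I \times f$ is $\delta^k \times X \co X \to I \times X$ (by naturality of $\delta^k \times -$); the pullback of $I \times i \co I \times A \to I \times B$ along $I \times f$ is $I \times f^*i \co I \times f^*A \to I \times X$ (since $I \times -$ preserves pullbacks); and both of these restrict compatibly to $f^*A$ in the apex of the pushout diagram. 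The resulting pushout is therefore $X +_{f^*A} (I \times f^*A)$, and the induced map to $I \times X$ is, by construction, $\delta^k \htimes f^*i$.

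For~(2), the $k$-squash square of $i$ has the form $\squish_k(i) \co \delta^k \htimes i \to 1_B$. Its codomain $1_B$ pulls back along $f$ to $1_X$, while its bottom horizontal map $\epsilon \times B$ pulls back along $f$ to $\epsilon \times X$, again by naturality of $I \times -$. Part~(1) already identifies the pullback of the domain $\delta^k \htimes i$ along $(I \times f, f)$ with $\delta^k \htimes f^*i$. It then remains to show that the top horizontal map of the pulled-back square is $\squi_k(f^*i)$. Since $\squi_k(i)$ is characterised by its restrictions $1_B$ on the $B$-summand and $i \cdot (\epsilon \times A)$ on the $I \times A$-summand of the pushout, pulling back along $f$ via the identifications of~(1) yields the map whose restrictions are $1_X$ on the $X$-summand and $f^*i \cdot (\epsilon \times f^*A)$ on the $I \times f^*A$-summand — which is precisely the characterisation of $\squi_k(f^*i)$.

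Both arguments are essentially diagram chases resting on the universality of colimits; the main care needed is in verifying compatibility of the various pullback squares composing the pushout diagram, but no substantive obstacle arises. The only technical subtlety is being systematic about identifying which map in the pushout diagram one is pulling back along, and I would address this by writing the pushout square explicitly and pulling back each of its four corners in turn.
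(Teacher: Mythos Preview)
Your proposal is correct and takes essentially the same approach as the paper: both arguments rest on the universality of colimits in $\catE$, pulling back the defining pushout span of $\delta^k \htimes i$ along $I \times f$ componentwise to recover that of $\delta^k \htimes (f^*i)$. The paper organises this as a cube whose left-to-right faces are Cartesian (and notes that item~(2) follows directly from item~(1)), whereas you spell out the component pullbacks and give more detail for~(2), but the content is the same.
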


\begin{proof}
To show item $(1)$, let us first consider the following cube:
\[
\xymatrix{
f^*A \ar[dd]|{\delta^k \times f^*A} \ar[dr]^{f^*i} \ar[rr]^{\pi} && A \ar[dr]^i \ar@{.>}[dd]|(0.49)\hole|(0.30){\delta^k \times A} & \\
& X \ar[rr]^(0.35)f \ar[dd]|(0.35){\delta^k \times X} && B \ar[dd]|{\delta^k \times B} \\
I \times (f^*A) \ar@{.>}[rr]|(0.53)\hole_(0.75){I \times \pi} \ar[dr]_{I \times f^*i} && I \times A \ar@{.>}[dr]^{I \times i} & \\
& I \times X \ar[rr]_{I \times f} && I \times B \rlap{.}
}
\]
Here, the square on the top is the pullback of $i$ along $f$. It is straightforward to verify that all squares pointing from left to right are Cartesian, and note that the squares on the left and right are the outer squares used for defining the pushout-products $\delta^k \htimes (f^*i)$ and $\delta^k \htimes i$ respectively. All of this implies that there is a comparison map $\delta^k \htimes (f^*i) \rightarrow (I \times f)^*(\delta^k \htimes i)$, which is an isomorphism because colimits in $\catE$ are universal. Item $(2)$ follows directly form item $(1)$.
\end{proof}

We recall a result about the squares $\theta^k \htimes i \co i \rightarrow \delta^k \htimes i$ from \cite[Lemma~4.3]{Gambino_2017}. 

\begin{lemma} \label{lem:thetacart}
For every $i \co A \rightarrow B$, the square $\theta^k \htimes i \co i \rightarrow \delta^k \htimes i$ below is Cartesian. 
\[
\xymatrix{
 A \ar[d]_i \ar[r] & B +_A (I \times A) \ar[d]^{\delta^k \htimes i} \\
 B \ar[r]_{\delta^{1-k} \times B} & I \times B  \, .
}
\]
\end{lemma}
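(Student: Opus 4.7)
The plan is to exploit the fact that in the Grothendieck topos $\catE$ colimits are stable under pullback, so the pullback of $\delta^k \htimes i \co B +_A (I \times A) \rightarrow I \times B$ along $\delta^{1-k} \times B$ can be computed by pulling back the entire pushout diagram defining $B +_A (I \times A)$ componentwise, and then reassembling via pushout.

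Unfolding the pushout-product, the three canonical maps into $I \times B$ coming from the corners of the defining pushout square are $\delta^k \times B \co B \rightarrow I \times B$, $I \times i \co I \times A \rightarrow I \times B$, and their common restriction $\delta^k \times i \co A \rightarrow I \times B$. Pulling each of these back along $\delta^{1-k} \times B$ produces $\emptyset$ in the first and third cases, as a direct consequence of the disjointness of the endpoint inclusions $\delta^k$ and $\delta^{1-k}$ guaranteed by the definition of an interval object; and the pullback of $I \times i$ along $\delta^{1-k} \times B$ is canonically isomorphic to $A$, corresponding to the slice $\{1-k\} \times A$ inside $I \times A$, with its map to $B$ being exactly $i$. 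Taking the pushout of the resulting span $\emptyset \leftarrow \emptyset \rightarrow A$ yields $A$, so the pullback of $\delta^k \htimes i$ along $\delta^{1-k} \times B$ is $i \co A \rightarrow B$, as claimed.

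The last step is to verify that the top arrow of the resulting Cartesian square agrees with the one displayed in the lemma, namely the composite $A \xrightarrow{\delta^{1-k} \times A} I \times A \to B +_A (I \times A)$. This is a short diagram chase using the universal property of the pullback: both candidate maps postcompose with $\delta^k \htimes i$ to give $\delta^{1-k} \times i \co A \rightarrow I \times B$ and with the pullback projection to $B$ to give $i$, so they coincide. I expect no serious obstacle beyond careful bookkeeping of the swap between $k$ and $1-k$ when pulling back endpoint inclusions; the whole argument rests on two well-established facts, universality of colimits in a topos and disjointness of the endpoint inclusions.
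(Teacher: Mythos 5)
Your proof is correct and takes essentially the same approach as the paper: pull back the defining span of the pushout $B +_A (I \times A)$ componentwise along $\delta^{1-k} \times B$ using universality of colimits, observe that two of the three resulting objects are $\emptyset$ by disjointness of the endpoint inclusions while the third is $A$, and conclude the pushout of the resulting span is $A$ over $B$. Your closing paragraph checking that the top arrow of the square agrees with the displayed one is a nice bit of extra care that the paper's proof leaves implicit.
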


\begin{proof}
The proof uses once again the fact that colimits in $\catE$ are universal. Let us compute the pullback of $\delta^k \htimes i$ along $\delta^{1-k} \times B$. By universality of colimits, this is the same as pulling back the diagram defining $B +_A (I \times A)$ and then calculating the colimit. 

We can observe in the following picture, the result of first pulling back the defining diagram of $B +_A (I \times A)$ which appears as the upper span of the right-most square on the following cube:
\[
\xymatrix{
\emptyset \ar[dd]|{} \ar[dr]^{} \ar[rr]^{} && A \ar[dr]^{i} \ar@{.>}[dd]|(0.49)\hole|(0.30){\delta^k \times A} & \\
& \ar[dd] \emptyset \ar[rr] && B \ar[dd]|{\delta^k \times B} \\
A \ar@{.>}[rr]_(0.65){\delta^{1-k} \times A} \ar[dr]_{i} && I \times A \ar@{.>}[dr]^{I \times i} & \\
& B \ar[rr]_{\delta^{1-k} \times B}  && I \times B  \, .
}
\]
Let us notice that the pullback of $\delta^k \times B$ (respectively $\delta^k \times A$) along $\delta^{1-k} \times B$ (respectively $\delta^{1-k} \times A$) is empty since the interval has disjoint endpoints. We conclude that the colimit of the upper span of the left-most square on the cube must be equal to $A$ and moreover, the universal arrow down to $B$ has to be $i \co A \rightarrow B$.
\end{proof}

Consider a generating monomorphism $i \co A \mono B$ and a uniform fibration $f \co X \rightarrow B$, then there are two possible trivial uniform cofibration structures on the map $\delta^k \htimes (f^*i)$: the first one is the canonical one, \emph{i.e.}~the one given by the fact that $f^*i$ is also a generating monomorphism. The second one is the one provided by the functorial Frobenius structure on uniform fibrations using the isomorphism $\delta^k \htimes (f^*i) \cong (I \times f)^*(\delta^k \htimes i)$ of \cref{lem:squishpb}. These two are actually the same structure as we show in the following lemma.

\begin{lemma} \label{lem:strpb}
Consider $i \co A \mono B$ be a generating monomorphism and $f \co X \rightarrow B$ a uniform fibration.  Then the two possible trivial uniform cofibration structures on $\delta^k \htimes (f^*i)$ coincide. 
\end{lemma}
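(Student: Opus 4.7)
The plan is to establish the coincidence by comparing the two structures at the level of their lifting operations against uniform fibrations. By algebraic-freeness of $(C_t, F)$ on $\Em_{\htimes}$ (\cref{remark:strange1}) together with \cref{prop:liftings}, a $C_t$-map structure on a given map is fully determined by the coherent system of lifts it provides against every uniform fibration. So it will suffice, for every uniform fibration $(h, \phi) \co Y \to Z$ and every lifting problem $(\alpha, \beta) \co \delta^k \htimes (f^*i) \to h$, to show that the two structures produce the same filler.

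Under the first structure, coming from the fact that $f^*i$ is a generating monomorphism by (M3), the filler is simply $\phi_{f^*i}(\alpha, \beta)$, obtained directly from the right $\Em_{\htimes}$-map structure $\phi$ applied at the generating cofibration $\delta^k \htimes (f^*i)$. For the second structure, I would first observe that $I \times f \co I \times X \to I \times B$ is itself a uniform fibration, being the pullback of $f$ along the projection $\rho_1 \co I \times B \to B$ and $F$-maps being closed under pullback. I would then apply the functorial Frobenius structure of~\cite[Theorem~8.8]{Gambino_2017} to the pair $(I \times f, \delta^k \htimes i)$, with common codomain $I \times B$, and transport the result along the iso $(I \times f)^*(\delta^k \htimes i) \cong \delta^k \htimes (f^*i)$ of \cref{lem:squishpb}. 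The lift against $h$ then arises by unwinding this construction.

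The key step is to verify that, once unwound, the Frobenius-induced lift of $(\alpha, \beta)$ equals $\phi_{f^*i}(\alpha, \beta)$. The Cartesian square $(I \times f, f) \co f^*i \to i$ is a morphism in the category of arrows $\Em$, and its image under $\delta^k \htimes (-)$ is a morphism in $\Em_{\htimes}$ between the generating $C_t$-maps $\delta^k \htimes (f^*i)$ and $\delta^k \htimes i$. The plan is to combine the naturality of $\phi$ as a right $\Em_{\htimes}$-map with respect to this morphism together with the compatibility of the isomorphism in \cref{lem:squishpb} with the Cartesian square $(I \times f, f)$ to conclude that the two fillers coincide.

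The main obstacle is the bookkeeping required to unwind the Frobenius construction from~\cite[Theorem~8.8]{Gambino_2017}, which proceeds through several intermediate steps involving pullback-exponential adjoints. However, no fundamentally new ingredient should be required beyond the naturality of $\phi$ on Cartesian morphisms in $\Em_{\htimes}$, the pullback-stability iso of \cref{lem:squishpb}, and the uniqueness of $F$-map structures transported across pullback squares provided by \cref{thm:vert-comp-pbk-awfs}.
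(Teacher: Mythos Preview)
Your reduction to comparing fillers against an arbitrary uniform fibration is sound, and identifying the Cartesian square $\delta^k \htimes (f^*i) \to \delta^k \htimes i$ as a morphism in $\Em_{\htimes}$ is correct. But the argument you sketch from there does not close the gap.

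The problem is the final step. Knowing that the pullback square is a morphism in $\Em_{\htimes}$ tells you it is a morphism of $C_t$-maps when both sides carry their \emph{canonical} structures. To conclude, you would need to know that the \emph{Frobenius}-induced structure on the source also makes this same square a morphism of $C_t$-maps, and then invoke some uniqueness principle to identify the two source structures. Neither piece is available. First, the functoriality in \cref{defn:FFstructure} concerns morphisms in $\Rmaps \times_\catC \Lmaps$, not the pullback square itself; the Frobenius construction does not come packaged with the statement that the pullback square is an $\Lmaps$-morphism. Second, and more seriously, the uniqueness you cite from \cref{thm:vert-comp-pbk-awfs} is for $R$-maps under pullback; the dual statement holds for $L$-maps under \emph{pushout}, not pullback. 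There is no general principle forcing two $C_t$-map structures on the source of a pullback square to agree just because each makes the square a morphism into a fixed target.

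What the paper actually does is unwind the Frobenius construction of \cite[Theorem~8.8]{Gambino_2017} explicitly: the Frobenius structure on $\delta^k \htimes (f^*i)$ is obtained by transporting along a retract diagram whose middle term is $\delta^k \htimes \delta^k \htimes (f^*i)$, and whose section is $\theta^k \htimes \delta^k \htimes (f^*i)$. The comparison of the two fillers then reduces to showing this section is a morphism of $C_t$-maps, which by symmetry of $\htimes$ amounts to $\theta^k \htimes (f^*i)$ being Cartesian --- precisely \cref{lem:thetacart}. Your proposal does not touch this retract diagram, and the ingredients you list (naturality of $\phi$, \cref{lem:squishpb}, \cref{thm:vert-comp-pbk-awfs}) are not sufficient to recover it.
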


\begin{proof}
Let us denote by $\lambda^1$ and $\lambda^2$, respectively, the canonical trivial uniform cofibration structure on $\delta^k \htimes (f^*i)$ and the one obtained by applying the functorial Frobenius structure.
In order to prove they are the same, let us consider $g \co Z \rightarrow Y$ a uniform fibration and a square $(a, b) \co \delta^k \htimes (f^*i) \rightarrow g$. Without loss of generality, let us denote by $\lambda^1$ and $\lambda^2$ the two fillers of this square given by the uniform trivial cofibration structure with the same name. 
We have to show that $\lambda^1 = \lambda^2$. If we go over the proof of \cite[Proposition~8.8]{Gambino_2017}, just before the conclusion, a retract diagram is
used to transfer the structure of a trivial cofibration to the desired morphism (since trivial cofibrations are closed under retracts). In our situation, this retract diagram is given by the two left-most squares shown below:
\[
\xymatrix{
\cdot \ar[d]_{\delta^k \htimes (f^*i)} \ar[rr] && \cdot \ar[d]|{\delta^k \htimes \delta^k \htimes (f^*i)} \ar[rr] && \cdot \ar[d]|{\delta^k \htimes (f^*i)} \ar[rr]^a && Z \ar[d]^g\\
\cdot \ar[rr]_{t} && \cdot \ar[rr] && \cdot \ar[rr]_b && Y \, , 
}
\]
where the left-most square is $\theta^k \htimes \delta^k \htimes (f^*i)$. Notice that $\delta^k \htimes \delta^k \htimes (f^*i)$ has a canonical trivial cofibration structure and thus, the square $\delta^k \htimes \delta^k \htimes (f^*i) \rightarrow f$ has a lift which we denote by $\lambda$. By definition, the lift $\lambda^2$ is equal to $\lambda \cdot t$ where $t$ is the horizontal arrow on the lower left part of the diagram. 

On the other hand, the lift of the outer square of the previous diagram is $\lambda^1$. Thus if we want to show that $\lambda^1 = \lambda^2$ it is sufficient to show that the square $\theta^k \htimes \delta^k \htimes(f^*i)$ is a morphism of trivial uniform cofibrations. To show this, we use that the pushout-product is symmetric and associative, and thus $\theta^k \htimes \delta^k \htimes(f^*i) \cong \delta^k \htimes \theta^k \htimes (f^*i)$. From this, we see that the square is a morphism of trivial uniform cofibrations provided the square $\theta^k \htimes (f^*i)$ is a morphism of generating monomorphisms, \emph{i.e.}~if it is Cartesian. But this is precisely the statement of \cref{lem:thetacart}. 
\end{proof}

We wish to show that the functorial Frobenius structure on uniform fibrations of~\cite[Theorem~8.8]{Gambino_2017} can be extended to normal uniform fibrations. We start with a proposition.

\begin{proposition} \label{lem:main1}
There is a lift of the pullback functor as shown:
\[
\xymatrix{
\nufib \times_{\catE} \category{UniFib} \ar[d] \ar[r]^(0.50){{PB}} & \category{NrmTrivCof} \ar[d] \\
\catE^{\rightarrow} \times_{\catE} \catE^{\rightarrow} \ar[r]_(0.60){PB} & \catE^{\rightarrow} \, .
}
\]
\end{proposition}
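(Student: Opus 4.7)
The plan is to construct the lift case-by-case according to the structure of $\nufib = \icat{I} \times \ufib$. Objects of $\nufib$ split into two classes: those of the form $(0, \delta^k \htimes j)$, whose image under $\unufib$ is the pushout-product $\delta^k \htimes j$ (a trivial cofibration generator), and those of the form $(1, \delta^k \htimes j)$, whose image under $\unufib$ is the identity $1_B$ (on the codomain of $j$). The nontrivial ``cross'' morphisms between these classes are precisely the squash squares $\squish_k(j)$, and these are the only new morphisms beyond those coming from $\ufib$ itself.

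On objects $(i, f)$ with $i = (1, \delta^k \htimes j)$ and $f \co X \rightarrow B$ a uniform fibration, the pullback $f^*(1_B) = 1_X$ is an identity, which admits a canonical normal trivial cofibration structure: every lifting problem against $1_X$ has a unique solution, and this solution is automatically degenerate in the sense required by the squash squares. On objects $(i, f)$ with $i = (0, \delta^k \htimes j)$ and $f$ a uniform fibration with matching codomain, the pullback $f^*(\delta^k \htimes j)$ receives a uniform trivial cofibration structure from the functorial Frobenius structure of \cite[Theorem~8.8]{Gambino_2017}. The plan is to refine this to a normal trivial cofibration structure by routing the construction through strong deformation retracts: the generator $\delta^k \htimes j$ itself admits a canonical strong deformation retract structure whose deformation homotopy is built from the connection $c^k \co I \times I \rightarrow I$; pulling this data back along $f$, using the uniform fibration structure to lift the deformation homotopy (as in the standard proof of the Frobenius property), yields a strong deformation retract on $f^*(\delta^k \htimes j)$ functorially in $(j, f)$. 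Composing with the functor $\Psi \co \category{SDR} \rightarrow \category{NrmTrivCof}$ of \cref{thm:SDRNTC} then produces the sought normal trivial cofibration structure.

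For morphisms, functoriality within each of the components $\{0\} \times \ufib$ and $\{1\} \times \ufib$ reduces to naturality statements already available: morphisms in $\{0\} \times \ufib$ come from Cartesian squares between generating monomorphisms, so naturality of the Frobenius construction and of $\Psi$ suffice; morphisms in $\{1\} \times \ufib$ map to identity squares and so are handled trivially. The essential new verification concerns squash square morphisms paired with compatible morphisms of uniform fibrations. Here \cref{lem:squishpb} is the key technical input: it tells us that pulling back a squash square of $j$ along a suitable map reproduces a squash square (for the pulled-back monomorphism $f^*j$), and this compatibility propagates through $\Psi$ to show that the pullback operation sends squash square morphisms in $\nufib$ to genuine morphisms of normal trivial cofibrations.

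The main obstacle is twofold. First, one must verify that the Frobenius construction of \cite[Theorem~8.8]{Gambino_2017} factors through strong deformation retracts in a sufficiently functorial way for the composition with $\Psi$ to be well defined; this comes down to tracking explicitly how the deformation homotopy on $\delta^k \htimes j$ provided by the connection is transported across the pullback via the uniform fibration's lifting operation. Second, one must check that squash-square coherence is preserved under this pullback operation, which is a diagram chase combining \cref{lem:squishpb} with the characterisation of degenerate lifts in \cref{prop:nufib} and the construction of $\Psi$ in \cref{thm:SDRNTC}.
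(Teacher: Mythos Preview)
Your approach has a genuine gap at the object level. You claim that $\delta^k \htimes j$ admits a canonical strong deformation retract structure and that this survives pullback along a uniform fibration, so that $\Psi$ may be applied. But the result actually available (\cite[Lemma~8.4]{Gambino_2017}, invoked in the proof of \cref{claim:1}) only equips $\delta^k \htimes j$ with a \emph{strong homotopy equivalence} structure: the connection axioms assumed in \cref{sect:UF} are one-sided and do not obviously produce a strict retraction $I \times B \to B +_A (I \times A)$. More seriously, even granting an SDR structure on $\delta^k \htimes j$, transporting it across the pullback along $f$ requires building a strict retraction of $f^*(\delta^k \htimes j)$; doing this via the lifting operation of $f$ forces you to lift a degenerate homotopy to a degenerate path, and that is precisely the \emph{normality} condition on $f$, which is not assumed here since $f$ is only a uniform fibration. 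The standard Frobenius argument yields a $C_t$-map structure via $\Cmaps \times_{\catE^{\rightarrow}} \category{SE} \to \Ctmaps$, not an SDR, so the input to $\Psi$ is simply not available.

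The paper's route is both simpler and avoids this obstacle. Since there is a forgetful functor $\category{NrmUniFib} \to \category{UniFib}$, applying $\lorth{(-)}$ gives a functor from (uniform) trivial cofibrations to $\category{NrmTrivCof}$ over $\catE^{\rightarrow}$, and the object-level lift is just the composite of the Frobenius structure of \cite[Theorem~8.8]{Gambino_2017} with this functor --- no detour through $\category{SDR}$ is needed. For the morphism case you correctly isolate the squash squares as the new content and correctly cite \cref{lem:squishpb}, but you are missing the second key ingredient. After factoring the relevant cospan so that the squash square is pulled back along $(I \times f, f)$, \cref{lem:squishpb} says the result is the squash square of $f^*i$; but for this to be a morphism of normal trivial cofibrations one must know that the structure on $\delta^k \htimes (f^*i)$ obtained via Frobenius agrees with the \emph{canonical} one coming from $f^*i \in \Em$. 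This is \cref{lem:strpb}, whose proof is an independent retract argument using \cref{lem:thetacart}, and without it the verification does not close.
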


\begin{proof}
Object-wise, this follows directly from \cite[Theorem~8.8]{Gambino_2017}. To see this, we notice that there are no more objects in the image of the category of arrows $\nufib$ than in the image of $\ufib$ thus we can apply the functorial Frobenius structure for uniform fibrations. Then we use the functor $\category{TrivCof} \rightarrow \category{NrmTrivCof}$, obtain by functoriality of the left orthogonal functor $\lorth{(-)}$ applied to the forgetful functor $\category{NrmUniFib} \rightarrow \category{UniFib}$.

For the morphism case, we first notice that the only morphisms in $\nufib$ that we need to consider are the squash squares. Thus let us consider a cospan of squares as in the following diagram:
\[
\xymatrix@R=0.6cm{
&& \cdot \ar[rd]^{\squi_k(i)} \ar[dd]_{\delta^k \htimes i} & \\
&&& B \ar@{=}[dd] \\
X' \ar[rr]^{f'} \ar[dr]_{m} && I \times B \ar[dr]_{\epsilon \times B} & \\
& X \ar[rr]_{f} && B  \, .
}
\]
such that the vertical square is the squash square of a generating monomorphism $i \co A \mono B$ and the horizontal square is a morphism of uniform fibrations $(m , \epsilon \times B) \co f' \rightarrow f$. We need to verify that pulling back the squash square along the morphism of uniform fibrations is a morphism of normal trivial cofibrations. 

The first thing we do is to split this cospan of squares into two, by factoring through the pullback square of $f$ along $\epsilon \times B$. That is we obtain the following diagrams:
\[
\xymatrix@R=0.6cm{
&& \cdot \ar[rd]^{\squi_k(i)} \ar[dd]_{\delta^k \htimes i} & &&
	&& \cdot \ar@{=}[rd] \ar[dd]_{\delta^k \htimes i} &\\
&&& B \ar@{=}[dd] && 
	&&& \cdot \ar[dd]^{\delta^k \htimes i} && \\
I \times X \ar[rr]^{I \times f} \pbcorner \ar[dr]_{\epsilon \times X} && I \times B \ar[dr]_{\epsilon \times B} & &&
	X' \ar[rr]^{f'} \ar@{.>}[dr]_{m^*} && I \times B \ar@{=}[dr] & \\
& X \ar[rr]_{f} && B && 
	& I \times X \ar[rr]_{I \times f} && I \times B \, , 
}
\]
where the dotted arrow $m^* \co X' \rightarrow (I \times X)$ is obtain by universal property. Notice that composing the two cospans of squares along their common face, produces the original one. Notice also that the two horizontal squares are morphisms of uniform fibrations. 

Let us focus first on the cospan of the right. The identity morphism $1 \co (\delta^k \htimes i) \rightarrow (\delta^k \htimes i)$ is a morphism of trivial uniform cofibrations, thus if we pull-back this along the morphism of uniform fibrations $(f', I \times f) \co m^* \rightarrow 1_{\delta^k \htimes i}$ we obtain a morphism of trivial uniform cofibrations by \cite[Theorem~8.8]{Gambino_2017} to which we can apply the functor $\category{TrivCof} \rightarrow \category{NrmTrivCof}$ to obtain a morphism of normal trivial cofibrations.  

With this we have reduced the situation to the cospan of squares on the left of the previous diagram. Using item $(2)$ of \cref{lem:squishpb} we see that the pullback of the squash square of $i \co A \mono B$ along the square $(I \times f, f) \co \epsilon \times X \rightarrow \epsilon \times B$ is the squash square of $f^*i \co f^*A \mono X$. This square is a morphism in $\nufib$ provided that the canonical trivial normal cofibration structure of $\delta^k \htimes (f^*i)$ is the same as that obtained from the functorial Frobenius structure; but this follows from \cref{lem:strpb}.
\end{proof}


\begin{proposition} \label{thm:FFNUF}
Let $(\catE, I, \Em)$ be a suitable topos. Then the awfs $(NC_t, NF)$ of normal uniform fibrations has a functorial Frobenius structure.
\end{proposition}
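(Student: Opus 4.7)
The goal is to exhibit a functorial Frobenius structure for the awfs $(NC_t, NF)$, that is, a lift
\[
\widetilde{PB} \co \category{NrmUniFib} \times_{\catE} \category{NrmTrivCof} \to \category{NrmTrivCof}
\]
of the pullback functor in the sense of \cref{defn:FFstructure}. The plan is to follow the strategy of~\cite[Theorem~8.8]{Gambino_2017}, using \cref{lem:main1} as the base case and the algebraic-freeness of $(NC_t, NF)$ on $\nufib$ (\cref{thm:NUF}) to extend from generators to the whole of $\category{NrmTrivCof}$.

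For the base case, I would combine \cref{lem:main1} with the fully faithful forgetful functor $\category{NrmUniFib} \to \category{UniFib}$ of \cref{lem:Uff} to obtain a lift
\[
\category{NrmUniFib} \times_{\catE} \nufib \to \category{NrmTrivCof}.
\]
This records that pulling back a generating map of $\nufib$ along a normal uniform fibration canonically yields a normal trivial cofibration, functorially in both variables.

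For the extension step, I would exploit that $(NC_t, NF)$ is algebraically-free on $\nufib$. Transposing the base case along the pushout-product/pullback-exponential adjunction (\cite[Proposition~5.9]{Gambino_2017}) rewrites it as a natural lift $\hhom(-,-) \co \nufib^{op} \times \category{NrmUniFib} \to \category{NrmUniFib}$. Because $\category{NrmUniFib} \cong \nufib^\boxslash$, the functorial orthogonality adjunction of~\eqref{equ:orth-adj} then lets me extend this lift naturally in the first argument to all of $\category{NrmTrivCof}$, producing $\hhom(-,-) \co \category{NrmTrivCof}^{op} \times \category{NrmUniFib} \to \category{NrmUniFib}$. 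Transposing back along the same adjunction yields $\widetilde{PB}$.

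The main obstacle is to verify that each step of the above transposition yields the correct functorial structure, in particular naturality in the normal uniform fibration variable. All the necessary machinery---namely the pushout-product/pullback-exponential adjunction, the algebraic-freeness of~$(NC_t,NF)$, and the functorial orthogonality adjunction---has been developed in \cite{Bourke_2016, Gambino_2017, Garner_2009}, so the verifications reduce to standard diagram chases, without requiring further ingredients beyond~\cref{lem:main1}.
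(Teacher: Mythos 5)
Your base step matches the paper exactly: composing the forgetful functor $\category{NrmUniFib}\to\category{UniFib}$ with the lift of \cref{lem:main1} produces the lift of $PB$ on the generating category $\nufib \times_\catE \category{NrmUniFib} \to \category{NrmTrivCof}$. The paper then invokes the general extension result \cite[Proposition~6.8]{Gambino_2017} (which says exactly that, for an algebraically-free awfs, a lift of $PB$ on generators extends uniquely to a functorial Frobenius structure on the whole category of left maps) and stops. You instead try to re-derive the extension by hand, and this is where there is a genuine gap.

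The difficulty is that the transposition you describe does not type-check. The base case is a lift of the \emph{pullback} functor $PB(f,-)$, not of a pushout-product $i \htimes -$. The adjunction you invoke, $(i\,\htimes -)\dashv \hhom(i,-)$ from \cite[Proposition~5.9]{Gambino_2017}, transposes lifts of $i\,\htimes -$ into lifts of $\hhom(i,-)$; it has nothing to say about the pullback functor $PB$, which is not one of the Leibniz operations. So the claimed intermediate lift $\hhom(-,-)\co \nufib^{op}\times\category{NrmUniFib}\to\category{NrmUniFib}$ cannot be obtained in the way you describe, and the subsequent ``extend via the orthogonality adjunction and transpose back'' is founded on an operation that was never there. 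The adjunction that \emph{is} relevant for extending a Frobenius lift from generators to arbitrary left maps is the pullback–pushforward adjunction $f^*\dashv\Pi_f$: a lifting problem $PB(f,j')\to g$ transposes to $j'\to \Pi_f g$, and the base case on generators is exactly what makes $\Pi_f g$ a (normal) uniform fibration, so every $j'\in\category{NrmTrivCof}$ lifts. This is the mechanism encapsulated in \cite[Proposition~6.8]{Gambino_2017}, so either cite it directly as the paper does, or replace the pushout-product/Leibniz transposition by the $f^*\dashv\Pi_f$ transposition and work out the functoriality from there.
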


\begin{proof}
Using the lift of \cref{lem:main1} and the forgetful functor $\category{NrmUniFib} \rightarrow \category{UniFib}$, we find a lift of the pullback functor as one shown: 
\[
\xymatrix{
\nufib \times_{\catE} \category{NrmUniFib} \ar[d] \ar[r]^(0.58){{PB}} & \category{NrmTrivCof} \ar[d] \\
\catE^{\rightarrow} \times_{\catE} \catE^{\rightarrow} \ar[r]_(0.60){PB} & \catE^{\rightarrow}  \, .
}
\]
The fact that we can extend this structure from $\nufib$ to the whole category $\category{NrmTrivCofi}$ follows from \cite[Proposition~6.8]{Gambino_2017}.
\end{proof}


\begin{theorem} \label{thm:mainNUF}
Consider a suitable topos $(\catE, I, \Em)$ satisfying condition (M5). Then the awfs $(NC_t, NF)$ of normal uniform fibrations has the structure of a type-theoretic awfs.
\end{theorem}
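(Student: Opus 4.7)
The plan is to simply assemble the three pieces required by \cref{defn:algmod} for the awfs $(NC_t, NF)$ constructed in \cref{thm:NUF}. Recall that a type-theoretic awfs consists of (i) an awfs satisfying the exponentiability property, (ii) a functorial Frobenius structure, and (iii) a stable functorial choice of path objects.

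First, I would dispose of the exponentiability condition (\cref{defn:expon}). Since $\catE$ is a Grothendieck topos, it is locally Cartesian closed, so every arrow is exponentiable in $\catE$; hence for any composable pair $g, f$ in the image of $\Fmaps \to \catE^{\rightarrow}$ (in particular the image of $\category{NrmUniFib} \to \catE^{\rightarrow}$) the exponential $\Pi_f g$ exists in $\catE/\text{cod}(f)$. This is automatic and requires no new argument.

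Next, I would invoke \cref{thm:FFNUF} to provide the functorial Frobenius structure on $(NC_t, NF)$. Observe that \cref{thm:FFNUF} was proved in an arbitrary suitable topos and does not require hypothesis (M5), so the standing hypothesis of \cref{thm:mainNUF} is more than enough.

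Finally, I would invoke \cref{thm:PONUF} to supply the stable functorial choice of path objects. It is precisely here that the assumption (M5) is used, since \cref{thm:PONUF} was stated and proved under this very hypothesis. Combining these three ingredients yields the required type-theoretic awfs structure on $(NC_t, NF)$. There is essentially no obstacle: the theorem is a summary statement, and the genuine work has been carried out earlier in \cref{thm:FFNUF} (functorial Frobenius structure) and \cref{thm:PONUF} (stable functorial choice of path objects). The only subtlety worth flagging is the role of (M5), which is needed to lift $\rho$ in \cref{lem:liftp}, but not for the Frobenius structure, explaining why it appears in the hypotheses of \cref{thm:mainNUF} but not in \cref{thm:FFNUF}.
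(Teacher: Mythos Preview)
Your proposal is correct and matches the paper's own proof, which simply cites \cref{thm:PONUF} and \cref{thm:FFNUF}. You are slightly more explicit than the paper in spelling out the exponentiability condition (automatic from local Cartesian closure of a Grothendieck topos) and in noting where (M5) enters, but the approach is identical.
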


\begin{proof}
The claim follows from \cref{thm:PONUF} and \cref{thm:FFNUF}.
\end{proof}

\appendix

\section{Some technical definitions}
\label{sect:appendix}

Let $(\catC, \rho, \chi)$ be a comprehension category.  For $n \in \mathbb{N}$, the category $DT_n(\rho, \chi)$ of \emph{dependent $n$-tuples} over $(\catC, \rho, \chi)$ is defined as follows. 
Objects are tuples $(\Gamma, A_1, \dots, A_n)$ where $\Gamma$ is an element in the base category, $A_1$ is in the fibre of $\rho$ over $\Gamma$ and,  for $i > 1$, $A_i$ is in the fiber over $\Gamma . A_1 . \cdots . A_{i-1}$.  An arrow $(\Delta, B_1, \dots , B_n) \rightarrow (\Gamma, A_1, \dots, A_n)$ consists of a tuple of the form $(u, f_1, \dots, f_n)$ where $f_1 : B_1 \rightarrow A_1$ is over $u : \Delta \rightarrow \Gamma$ and for $i>1$ we have
\[
\xymatrix{
B_i \ar[rr]^{f_i} \ar@{.}[d] && A_i \ar@{.}[d] \\
\Delta.B_1.\dots.B_{i-1} \ar[rr]^{f_{i-1}} && \Gamma.A_1.\dots. A_{i-1} \rlap{.}
}
\]
Composition and identities are given component-wise by the structure of the fibration~$\rho$.  We say that an arrow of dependent tuples is \emph{Cartesian} if every composing arrow (except the one of the base category) is Cartesian with respect to $\rho$. 

\subsection*{$\Sigma$-types and $\Pi$-types} A {\em choice of $\Sigma$-types} for $(\catC, \rho, \chi)$ consists of an operation that assigns to each dependent tuple $(\Gamma, A, B) \in DT_2(\rho, \chi)$ a tuple $(\Sigma_A B, pair_{A, B}, sp_{A, B})$ consisting of the following data:
\begin{enumerate}
\item $\Sigma_A B$ is an object of $\catE$ over $\Gamma$.

\item $pair_{A,B}$ is an arrow over $\chi_A$ as shown:
\[
\xymatrix{
\Gamma.A.B \ar[rr]^{pair_{A, B}} \ar[d]_{\chi_{B}} && \Gamma.\Sigma_A B \ar[d]^{\chi_{\Sigma_A B}} \\
\Gamma.A \ar[rr]_{\chi_A} && \Gamma \rlap{.}
}
\]

\item $sp_{A,B}$ is an operation that takes a dependent tuple $(\Gamma, \Sigma_A B, C) \in DT_2(\chi, \rho)$ and a section $t$ of $C$ over $pair_{A, B}$, as in the following solid arrowed diagram:
\[
\xymatrix{
&& \Gamma.\Sigma_A B.C \ar[d] \\
\Gamma.A.B \ar[rr]_{pair_{A,B}} \ar[rru]^{t} && \Gamma.\Sigma_A B \ar@/_2.0pc/@{.>}[u]_{sp_{A,B}(C,t)}
}
\]
to a section $sp_{A,B}(C, t)$ of $C$, shown in the above diagram as the dotted arrow.

\item The above data is subject to the condition that, for any section~$t$ of~$C$ over~$pair_{A, B}$,
$sp_{A,B}(C,t) \circ pair_{A,B} = t$. Thus says that the triangle in the diagram of item $(3)$ involving the dotted arrow commutes.
\end{enumerate}


A {\em choice of $\Pi$-types} for $(\catC, \rho, \chi)$ consists of an operation that assigns to each dependent tuple $(\Gamma, A, B) \in DT_2(\rho, \chi)$ a tuple $(\Pi_A B, \lambda_{A, B}, app_{A, B})$ consisting of the following data:
\begin{enumerate}
\item $\Pi_A B$ is an object of  $\catE$  over $\Gamma$.
\item $\lambda_{A, B}$ is an operation that takes a section $t : \Gamma.A \rightarrow \Gamma.A.B$ of $\chi_B$ to a section $\lambda_{A, B}(t) : \Gamma \rightarrow \Pi_A B$ of $\chi_{\Pi_A B}$, as shown in the following diagram:
\[
\xymatrix{
 & \Gamma.A.B \ar[d] && & \Gamma.\Pi_A B \ar[d] \\
\Gamma.A \ar@{=}[r] \ar[ur]^t & \Gamma.A \ar@{}[rru]|{\mapsto}&& \Gamma \ar@{=}[r] \ar[ur]^{\lambda_{A, B}(t)} & \Gamma \rlap{.}
}
\]
\item $app_{A, B}$ is an arrow in the slice over $\Gamma.A$, as shown:
\[
\xymatrix{
\Gamma.A.\Pi_A B \ar[rr]^{app_{A, B}} \ar[d]_{\chi_{\Pi_A B}} && \Gamma.A.B \ar[d]^{\chi_B} \\
\Gamma.A \ar@{=}[rr] && \Gamma.A  \rlap{,}
}
\]
where $\Gamma.A.\Pi_A B$ is (the comprehension of) \emph{any} reindexing of $\Pi_A B$ along $\chi_A$. Notice that the choice of $app_{A, B}$ determines uniquely any other choice with respect to a different Cartesian reindexing of $\Pi_A B$, this follow by the universal property of Cartesian arrows.

\item This data must be subject to the condition that, for any section $t \co \Gamma.A \rightarrow \Gamma.A.B$ of $\chi_B$, $app_{A, B} \circ (\lambda(t)[\chi_A]) = t$. 
where $\lambda(t)[\chi_A]$ is the result of reindexing $\lambda(t)$ along $\chi_A$.
\end{enumerate}


\subsection*{Strict stability}  Let $(\catC, \rho, \chi)$ be a split comprehension category.

A choice of $\Sigma$-types $(\Sigma, pair, sp)$ for $(\catC, \rho, \chi)$ is said to be {\em strictly stable} if for every morphism $\sigma: \Delta \rightarrow \Gamma$ in the base category and for any dependent tuple $(\Gamma, A, B) \in DT_2(\rho, \chi)$, the following conditions are satisfied:
\begin{enumerate}
\item $\Sigma_{A[\sigma]} B[\sigma] = (\Sigma_A B) [\sigma]$

\item The following diagram commutes:
\[
\xymatrix{
\Delta.A[\sigma].B[\sigma] \ar[rr]^{\sigma^{**}} \ar[d]_{pair_{A[\sigma], B[\sigma]}} && \Gamma.A. B \ar[d]^{pair_{A, B}} \\
\Delta.\Sigma_{A[\sigma]} B[\sigma] \ar[rr]_{\sigma^{*}} && \Gamma.\Sigma_{A} B \rlap{,}
}
\]
where the horizontal arrows are obtained by the split reindexing along $\sigma$. 

\item For any dependent tuple $(\Gamma, \Sigma_{A} B, C)$ in $DT_2(\rho, \chi)$, and any section $t$ of $C$ over $pair_{A, B}$ there is a corresponding dependent tuple $(\Delta, \Sigma_{A[\sigma]} B[\sigma], C[\sigma])$ and a section $t[\sigma]$ of $C$ over $pair_{A[\sigma], B[\sigma]}$ obtained by reindexing. The following diagram is required to commute:
\[
\xymatrix{
\Delta.\Sigma_{A[\sigma]}B[\sigma] \ar[rr]^{\sigma^{*}} \ar[d]_{sp(C[\sigma], t[\sigma])} && \Gamma.\Sigma_AB \ar[d]^{sp(C, t)} \\
\Delta.\Sigma_{A[\sigma]} B[\sigma].C[\sigma] \ar[rr]_{\sigma^{**}} && \Gamma.\Sigma_A B.C \rlap{,}
}
\]
where the horizontal arrows are obtained by the split reindexing along $\sigma$. 
\end{enumerate}

 A choice of $\Pi$-types $(\Pi, \lambda, app)$ for $(\catC, \rho, \chi)$ is said to be {\em strictly stable} if for every morphism $\sigma: \Delta \rightarrow \Gamma$ in the base category and for any dependent tuple $(\Gamma, A, B) \in DT_2(\rho, \chi)$, the following conditions are satisfied:
\begin{enumerate}
\item $\Pi_{A[\sigma]} B[\sigma] = (\Pi_A B) [\sigma]$
\item For any section $t$ of $\chi_B$ there is a corresponding section $t[\sigma]$ of $\chi_{B[\sigma]}$ obtained by reindexing. This two sections must be related by the following commutative diagram:
\[
\xymatrix{
\Delta \ar[rr]^{\sigma} \ar[d]_{\lambda_{A[\sigma], B[\sigma]}(t[\sigma])} && \Gamma \ar[d]^{\lambda_{A, B}(t)} \\
\Delta.\Pi_{A[\sigma]} B[\sigma] \ar[rr]_{\sigma^{*}} && \Gamma.\Pi_A B\rlap{,}
}
\]
where the lower horizontal arrow is obtained by the split reindexing along $\sigma$. 
\item The following diagram commutes:
\[
\xymatrix{
\Delta.A[\sigma].\Pi_{A[\sigma]} B[\sigma] \ar[rr]^{\sigma^{**}} \ar[d]_{app_{A[\sigma], B[\sigma]}} && \Gamma.A.\Pi_A B \ar[d]^{app_{A, B}} \\
\Delta.A[\sigma].B[\sigma] \ar[rr]_{\sigma^{**}} && \Gamma.A.B\rlap{,}
}
\]
where the horizontal arrows are obtained by split reindexing along $\sigma$. 
\end{enumerate}

\subsection*{Pseudostability} Let $(\catC, \rho, \chi)$ be a comprehension category.

A choice of $\Sigma$-types $(\Sigma, pair, sp)$ for $(\catC, \rho, \chi)$  is said to be {\em pseudo-stable} if for every Cartesian arrow $(\sigma, f, g) : (\Delta, A', B') \rightarrow (\Gamma, A, B)$ of dependent tuples, the following conditions are satisfied:
\begin{enumerate}
\item There is a Cartesian arrow $\Sigma_f g : \Sigma_{A'} B' \rightarrow \Sigma_A B$ over $\sigma$ and the assignment:
\[
(\sigma, f, g) \mapsto (\sigma, \Sigma_f g)
\]
is functorial, i.e. $\Sigma_{1_A} 1_B = 1_{\Sigma_A B}$ and $\Sigma_{(f' \circ f)} (g' \circ g) = \Sigma_{f'} g' \circ \Sigma_f g$. 

\item The following diagram commutes:
\[
\xymatrix{
\Delta.A'.B' \ar[rr]^{g} \ar[d]_{pair_{A', B'}} && \Gamma.A. B \ar[d]^{pair_{A, B}} \\
\Delta.\Sigma_{A'} B' \ar[rr]_{\Sigma_f g} && \Gamma.\Sigma_{A} B \rlap{.}
}
\]

\item For any Cartesian arrow $h : C' \rightarrow C$ above $\Sigma_f g : \Sigma_{A'} B' \rightarrow \Sigma_A B$ and for any section $t$ of $C$ over $pair_{A, B}$ there is a corresponding section $t'$ of $C'$ over $pair_{A', B'}$ obtained by reindexing. The following diagram is required to commute:
\[
\xymatrix{
\Delta.\Sigma_{A'}.B' \ar[rr]^{\Sigma_fg} \ar[d]_{sp(C',t')} && \Gamma.\Sigma_AB \ar[d]^{sp(C, t)} \\
\Delta.\Sigma_{A'} B'.C' \ar[rr]_{h} && \Gamma.\Sigma_A B.C \rlap{.}
}
\]
\end{enumerate}

A choice of $\Pi$-types $(\Pi, \lambda, app)$ for $(\catC, \rho, \chi)$ is said to be {\em pseudo-stable} if for every Cartesian arrow $(\sigma, f, g) : (\Delta, A', B') \rightarrow (\Gamma, A, B)$ of dependent tuples, the following conditions are satisfied:
\begin{enumerate}
\item There is a Cartesian arrow $\Pi_f g : \Pi_{A'} B' \rightarrow \Pi_A B$ over $\sigma$ and the assignment:
\[
(\sigma, f, g) \mapsto (\sigma, \Pi_f g)
\]
is functorial, i.e. $\Pi_{1_A} 1_B = 1_{\Pi_A B}$ and $\Pi_{(f' \circ f)} (g' \circ g) = \Pi_{f'} g' \circ \Pi_f g$. 
\item For any section $t : \Gamma.A \rightarrow \Gamma.A.B$ of $B$ there is a corresponding section $t' : \Delta.A' \rightarrow \Delta.A'.B'$ of $B'$ obtained by reindexing along $f : \Delta.A' \rightarrow \Gamma.A$. Then, the following diagram commutes:
\[
\xymatrix{
\Delta \ar[rr]^{\sigma} \ar[d]_{\lambda_{A', B'}(t')} && \Gamma \ar[d]^{\lambda_{A, B}(t)} \\
\Delta.\Pi_{A'} B' \ar[rr]_{\Pi_f g} && \Gamma.\Pi_A B \rlap{.}
}
\]
\item The following diagram commutes:
\[
\xymatrix{
\Delta.A'.\Pi_{A'} B' \ar[rr]^{\Pi_f g} \ar[d]_{app_{A', B'}} && \Gamma.A.\Pi_A B \ar[d]^{app_{A, B}} \\
\Delta.A'.B' \ar[rr]_{g} && \Gamma.A.B \rlap{.}
}
\]
\end{enumerate}

\bibliographystyle{plain}

\newcommand{\noopsort}[1]{}

\end{document}